\renewcommand{\phi}{\varphi}
\newcommand{\N}{\mathbb{N}}
\newcommand{\R}{\mathbb{R}}
\newcommand{\calF}{\mathcal{F}}
\newcommand{\calG}{\mathcal{G}}
\newcommand{\calI}{\mathcal{I}}
\newcommand{\calJ}{\mathcal{J}}
\newcommand{\calA}{\mathcal{A}}
\newcommand{\calS}{\mathcal{S}}
\newcommand{\scalprod}[1]{\langle #1 \rangle}
\newcommand{\norm}[1]{\| #1 \|}
\newcommand{\set}[2]{\left\{#1:#2\right\}}
\DeclareMathOperator{\sign}{\mathrm{sign}}
\DeclareMathOperator{\dom}{\mathrm{dom}}
\DeclareMathOperator{\Id}{\mathrm{Id}}
\newcommand{\prox}{\mathrm{prox}}
\pgfplotsset{compat=newest}
\pgfplotsset{plot coordinates/math parser=false}
\title{A convex analysis approach to optimal controls with switching structure for partial differential equations}
\author{Christian Clason\thanks{Faculty of Mathematics, University Duisburg-Essen, 45117 Essen, Germany (\email{christian.clason@uni-due.de})}
    \and Kazufumi Ito\thanks{Department of Mathematics, North Carolina State University, Raleigh, North Carolina, USA (\email{kito@math.ncsu.edu}).} 
\and Karl Kunisch\thanks{Institute of Mathematics and Scientific Computing, University of Graz, Heinrichstrasse 36, 8010 Graz, Austria (\email{karl.kunisch@uni-graz.at}).}}
\date{March 15, 2015}
\begin{document}

\maketitle
\allowdisplaybreaks

\begin{abstract}
    Optimal control problems involving hybrid binary--continuous control
    costs are challenging due to their lack of convexity and weak lower
    semicontinuity. Replacing such costs with their convex relaxation leads
    to a primal-dual optimality system that allows an explicit pointwise
    characterization and whose Moreau--Yosida regularization is amenable to
    a semismooth Newton method in function space. This approach is
    especially suited for computing switching controls for partial
    differential equations. In this case, the optimality gap between the
    original functional and its relaxation can be estimated and shown to be
    zero for controls with switching structure. Numerical examples
    illustrate the effectiveness of this approach. 
\end{abstract}

\section{Introduction}
\label{sec:introduction}

In the context of control of differential equations, switching control refers to problems with two or more controls of which only one should be active at every point in time. This is a challenging problem due to its hybrid discrete--continuous nature.

To partially set the stage, consider the parabolic partial differential equation $Ly = Bu$ on $\Omega_T:=[0,T]\times \Omega$, where $L=\partial_t - A$ for an elliptic operator $A$ defined on $\Omega\subset \R^n$, and $B$ is defined by $(Bu)(t,x)= \chi_{\omega_1}(x) u_1(t)+\chi_{\omega_2}(x)u_2(t)$ for given control domains $\omega_1,\omega_2\subset\overline\Omega$ (which may include controls acting on the boundary).
To promote a switching structure, we propose to use the binary function
\begin{equation}\label{eq:def_penalty}
    |\cdot|_0:\R\to \R,\qquad |t|_0 := \begin{cases} 
        1 & \text{if } t \neq 0,\\
        0 & \text{if } t = 0,
    \end{cases}
\end{equation}
to construct a cost functional which has the value $0$ if and only if at most one control is active pointwise. To guarantee coercivity, we also need to add an (in this case) quadratic term, i.e., we define for $v=(v_1,v_2)\in\R^2$ the pointwise control cost
\begin{equation}
    g(v) = \frac\alpha2(v_1^2 + v_2^2) + \beta |v_1 v_2|_0.
\end{equation}
This term combines in a single functional both switching enhancement and a quadratic cost for the active control(s), where the binary part naturally acts as a penalization of the switching constraint $v_1 v_2 = 0$. In this respect we shall consider the asymptotic behavior $\beta \to \infty$ in \cref{sec:optimality}.

For some $\omega_T\subset \Omega_T$ we then consider the problem 
\begin{equation}\label{eq:problem_motiv}
    \left\{\begin{aligned}
            \min_{u \in L^2(0,T;\R^2)} &\frac12\norm{y-z}_{L^2(\omega_T)}^2 + \int_0^T g(u(t))\,dt,\\
            \text{s.\,t.}\quad & Ly = Bu.
    \end{aligned}\right.
\end{equation}
Using the solution operator $S=L^{-1}B:u \mapsto y$, problem \eqref{eq:problem_motiv} can be expressed in reduced form~as
\begin{equation}\label{eq:orig}
    \min_{u} \calF(u) + \calG(u),
\end{equation}
where $\calF$ is smooth and convex, and $\calG$ is neither smooth nor convex nor, in fact, weakly lower semicontinuous (since this is the case if and only if $g$ is lower semicontinous and convex, which is not the case; see, e.g., \cite[Corollary 2.14]{Braides}). This makes both its analysis and its numerical solution challenging; for example, one cannot rely on standard techniques to guarantee existence of solutions. We therefore consider the relaxed problem
\begin{equation}\label{eq:bidu}
    \min_{u} \calF(u) + \calG^{**}(u),
\end{equation}
where $\calG^{**}$ is the biconjugate of $\calG$, which is always convex. Existence and optimality conditions for the relaxed problem can readily be obtained. 
However, as we shall see, these optimality conditions are not directly amenable to numerical solution by Newton-type techniques. For this reason we consider a regularized optimality system
\begin{equation}\label{eq:regopt}
    \left\{\begin{aligned}
            - p_\gamma &\in \partial\calF(u_\gamma),\\
            u_\gamma &= (\partial\calG^*)_\gamma(p_\gamma),
    \end{aligned}\right.
\end{equation}
where $(\partial\calG^*)_\gamma$ is the Moreau--Yosida approximation of the subdifferential of the Fenchel conjugate $\calG^{*}$. Thus for the numerical realization, only $(\partial\calG^*)_\gamma$ is needed which can be computed without explicit knowledge of $\calG^{**}$. 
For problem \eqref{eq:problem_motiv}, the first relation of \eqref{eq:regopt} coincides with the usual state and adjoint equations, while the second relation allows a pointwise characterization; see \eqref{eq:h_gamma} below.

The remainder of this work is organized as follows.
In \cref{sec:convex}, we shall provide the abstract existence results, derive optimality conditions, and prove the convergence of solutions to system~\eqref{eq:regopt} to minimizers of problem~\eqref{eq:bidu}. 
\Cref{sec:pointwise} is dedicated to giving an explicit pointwise characterization of the subdifferential $\partial\calG^*$ and its Moreau--Yosida $(\partial\calG^*)_\gamma$ in the concrete case of switching control; two other functionals involving $|\cdot|_0$ (sparsity and multi-bang penalties) are discussed in \cref{sec:binary}. 
These characterizations allow addressing the significant questions related to the relaxation~\eqref{eq:bidu} of~\eqref{eq:orig} in \cref{sec:optimality}: We clarify the relation between the value of the costs in~\eqref{eq:bidu} and in~\eqref{eq:orig} in terms of the duality gap between $\calG$ and $\calG^*$, and show that in certain cases it can be guaranteed to be zero. If this is the case, then the solution to problem~\eqref{eq:bidu} is also a solution to problem~\eqref{eq:orig}. Moreover, we analyze to which extent the choice of the functional $(v_1,v_2) \mapsto |v_1 v_2|_0$, when used as part of control costs, in fact leads to optimal solutions of switching type. We shall be able to give a sufficient condition on the relation of $\alpha$ and $\beta$ for~\eqref{eq:bidu} that rule out free arcs, where $|v_1|$ and $|v_2|$ are both strictly positive but not equal, whereas singular arcs, on which $|v_1|=|v_2|>0$, may remain. 
\cref{sec:switching:solution} is concerned with the numerical solution of \eqref{eq:regopt} via a path-following semismooth Newton method. To guarantee convergence, a globalization is required. This guarantees superlinear convergence of the semismooth Newton algorithm in spite of the challenging cost, which combines continuous and discrete objectives. 
Finally, \cref{sec:examples} contains numerical tests for switching controls in the context of an elliptic and a parabolic partial differential equation. 

Let us put our work into perspective with respect to the existing literature. Casting the problem of switching controls as a nonconvex optimization problem involving the binary functional $|\cdot|_0$ is certainly new. Concerning the convex relaxation of nonconvex problems, we can draw from existing works. We only mention the monograph \cite{Ekeland:1999a}, where, however, the focus is on obtaining existence rather than on explicit optimality conditions and numerical realization.
The partial (Moreau--Yosida) regularization of nonsmooth convex finite-dimensional problems for the purpose of efficiently applying first-order methods was investigated in \cite{Beck:2012}.
Switching control has been studied mainly for ordinary differential equations; here we refer to \cite{Shorten:2007} for a survey with emphasis on stability of switching systems. The Hamilton--Jacobi--Bellman equation for switching controls was extensively studied in \cite{Dolcetta:1984} and \cite{Yong:1989}.
Switching control in the context of partial differential equations was especially investigated with respect to their improved flexibility over nonswitching controls for  stabilization \cite{Gugat:2008,Martinez:2002}. Controllability for systems with switching controls were studied in \cite{Zuazua:2011,Lu:2013}.
The hybrid nature of continuous and discrete phenomena when
the system switches among different modes is the focus of the work in 
\cite{Hante:2009,Hante:2013}. In \cite{Hante:2013} a relaxation 
technique combined with rounding strategies is proposed to solve 
mixed-integer programming problems arising in optimal control of partial 
differential equations. It is verified that the solution of the relaxed 
problems can be approximated with arbitrary accuracy by a solution 
satisfying the integer requirements. In \cite{Iftime:2009} optimal 
control of linear switched systems are considered, and an algorithmic 
treatment is proposed that relies on an exhaustive search which 
involves solving on the order of $m^k$ differential Riccati equations, 
where $m$ denotes the number of possible controller configurations and 
$k$ the number of predefined switching times.

\section{Convex relaxation and regularization approach}\label{sec:convex}

In this section we introduce  the abstract framework and recall relevant concepts from convex analysis. Consider the variational problem
\begin{equation}\label{eq:formal_prob}
    \min_{u\in U} \calJ(u) = \min_{u\in U} \calF(u) + \calG(u),
\tag{$\mathcal{P}$}
\end{equation}
where $U$ is a Hilbert space and $\calF:U\to\R$ is convex. If moreover $\calG:U\to\R\cup\{\infty\}$ is convex, any minimizer $\bar u \in U$
satisfies (under a regularity assumption stated below) the following necessary optimality conditions: There exists a $\bar p\in -\partial\calF(\bar u)\subset U^*$ such that $\bar p\in \partial\calG(\bar u)\subset U^*$, which holds if and only if $\bar u\in\partial\calG^*(\bar p)$; see, e.g., \cite[Proposition 4.4.4]{Schirotzek:2007}. Here,
\begin{equation}
    \calG^*:U^* \to \R\cup\{\infty\},\qquad  \calG^*(p) = \sup_{u\in U}\, \langle u,p \rangle - \calG(u),
\end{equation}
denotes the Fenchel conjugate of the convex functional $\calG$, and $\partial\calG^*$ denotes its convex subdifferential. (In the following, we identify the Hilbert space $U$ with its dual $U^*$ and consider $\calG^*:U \to \R\cup\{\infty\}$.) We thus obtain the primal-dual optimality system
\begin{equation}\label{eq:formal_opt}
    \left\{\begin{aligned}
            -\bar p &\in \partial\calF(\bar u),\\
            \bar u &\in \partial\calG^*(\bar p),
    \end{aligned}\right.
\end{equation}
which is well-defined even for nonconvex $\calG:U\to\R\cup\{\infty\}$ as in the situation we are interested in. To argue existence of a solution, we will show that the system~\eqref{eq:formal_opt} is the necessary optimality condition for
\begin{equation}\label{eq:formal_biconj}
    \min_{u\in U} \calF(u) + \calG^{**}(u),
\end{equation}
where $\calG^{**}=(\calG^*)^*$ is the biconjugate of $\calG$, and make the following standard assumptions:
\begin{equation}\label{eq:assumption_f}
    \left\{
        \begin{aligned}
            &\calF \text{ is convex and weakly lower-semicontinuous,}\\
            &\calG \text{ is proper and non-negative,}\\
            &\calF+\calG^{**} \text{ is radially unbounded.}
        \end{aligned}
    \right.
    \tag{\textsc{a}1}
\end{equation}
\begin{proposition}\label{thm:existence}
    Under assumption \eqref{eq:assumption_f}, the system~\eqref{eq:formal_opt} admits a solution $(\bar u,\bar p)\in U\times U$. If $\calF$ is strictly convex, this solution is unique.
\end{proposition}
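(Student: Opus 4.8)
The plan is to establish existence by showing that the regularized-to-biconjugate problem \eqref{eq:formal_biconj} admits a minimizer and that this minimizer, together with an associated dual variable, solves \eqref{eq:formal_opt}. First I would verify that the problem \eqref{eq:formal_biconj} is amenable to the direct method. Since $\calG$ is proper and non-negative by \eqref{eq:assumption_f}, its biconjugate $\calG^{**}$ is proper, convex, and weakly lower-semicontinuous (it is the supremum of continuous affine functionals, being the lower semicontinuous convex hull of $\calG$), and it is also non-negative. Combined with $\calF$ being convex and weakly lower-semicontinuous, the objective $\calF + \calG^{**}$ is convex, weakly lower-semicontinuous, and — by the radial unboundedness assumption — coercive on the Hilbert space $U$. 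A standard application of the direct method (take a minimizing sequence; coercivity gives boundedness; reflexivity of $U$ gives a weakly convergent subsequence; weak lower semicontinuity passes to the limit) then yields a minimizer $\bar u \in U$.

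Next I would derive the optimality system. At the minimizer $\bar u$ of the convex problem \eqref{eq:formal_biconj}, Fermat's rule gives $0 \in \partial(\calF + \calG^{**})(\bar u)$. Under the regularity assumption alluded to before the statement (ensuring the subdifferential sum rule $\partial(\calF+\calG^{**})(\bar u) = \partial\calF(\bar u) + \partial\calG^{**}(\bar u)$ holds — e.g.\ continuity of $\calF$ at a point of $\dom\calG^{**}$), there exists $\bar p \in U^*$ with $-\bar p \in \partial\calF(\bar u)$ and $\bar p \in \partial\calG^{**}(\bar u)$. By the Fenchel–Young equality and the fact that $\calG^{***} = \calG^*$, the inclusion $\bar p \in \partial\calG^{**}(\bar u)$ is equivalent to $\bar u \in \partial(\calG^{**})^*(\bar p) = \partial\calG^*(\bar p)$; see, e.g., \cite[Proposition 4.4.4]{Schirotzek:2007}. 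This is precisely \eqref{eq:formal_opt}, so $(\bar u, \bar p)$ is a solution.

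For uniqueness under strict convexity of $\calF$: if $(\bar u_1, \bar p_1)$ and $(\bar u_2, \bar p_2)$ both solve \eqref{eq:formal_opt}, then each $\bar u_i$ minimizes the strictly convex functional $\calF + \calG^{**}$ (reversing the Fermat-rule argument, since $-\bar p_i \in \partial\calF(\bar u_i)$ and $\bar u_i \in \partial\calG^*(\bar p_i)$ imply $\bar p_i \in \partial\calG^{**}(\bar u_i)$ and hence $0 \in \partial(\calF+\calG^{**})(\bar u_i)$). Strict convexity of $\calF$ — and hence of $\calF + \calG^{**}$ — forces $\bar u_1 = \bar u_2 =: \bar u$; then $\bar p_1, \bar p_2 \in \partial\calG^{**}(\bar u)$, but this set need not be a singleton in general, so strict convexity of $\calF$ alone may not pin down $\bar p$. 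I expect the proposition's uniqueness claim to refer to the primal variable $\bar u$ (as is standard), and I would phrase the conclusion accordingly.

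The main obstacle is bookkeeping around the subdifferential sum rule and the passage $\partial\calG^{**}(\bar u) \ni \bar p \iff \bar u \in \partial\calG^*(\bar p)$: the first needs the qualification condition hinted at in the text, and the second is the standard conjugate-subdifferential duality, valid because $\calG^{**}$ is proper convex lsc. Both are routine once the direct-method step is in place; the only genuine analytic input is confirming weak lower semicontinuity and coercivity of $\calF + \calG^{**}$, which follow directly from \eqref{eq:assumption_f} together with the general fact that biconjugates are weakly lsc and non-negativity of $\calG$ transfers to $\calG^{**}$.
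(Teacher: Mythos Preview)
Your proposal is correct and follows essentially the same route as the paper: apply the direct method to the convex problem \eqref{eq:formal_biconj}, invoke the subdifferential sum rule (the paper justifies it via $\dom\calF = U$ and the Attouch--Br\'ezis stability condition, you via the equivalent Moreau--Rockafellar continuity criterion), and pass from $\bar p\in\partial\calG^{**}(\bar u)$ to $\bar u\in\partial\calG^*(\bar p)$ using $\calG^{***}=\calG^*$. Your observation that strict convexity of $\calF$ pins down $\bar u$ but not necessarily $\bar p$ is a valid refinement that the paper does not spell out.
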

\begin{proof}
    By assumption, $\calG:U\to \R+\cup\{\infty\}$ is bounded from below by $0$, which implies that $\calG^{**}\geq 0$ as well, see, e.g. \cite[Proposition 13.14]{Bauschke:2011}. Furthermore, Fenchel conjugates are always lower semicontinuous and convex, see, e.g. \cite[Proposition 13.11]{Bauschke:2011}. Together with assumption \eqref{eq:assumption_f} this implies that $\calF +\calG^{**}$ is convex, weakly lower semicontinuous, and radially unbounded, and thus a standard subsequence argument yields existence of a minimizer $\bar u\in U$ to \eqref{eq:formal_biconj}.

    Since $\dom\calF = U$ ensures that the stability condition
    \begin{equation}
        \bigcup_{\lambda\geq 0} \lambda (\dom\calF - \dom\calG^{**} ) \text{ is a closed vector space}
    \end{equation}
    holds, we can apply the sum rule for the convex subdifferential from \cite{Brezis:1986} and again appeal to \cite[Proposition 4.4.4]{Schirotzek:2007} for $\partial\calG^{**}$ to arrive at the necessary optimality conditions~\eqref{eq:formal_opt}.
\end{proof}

Problem \eqref{eq:formal_biconj} can be seen a convex relaxation of problem~\eqref{eq:formal_prob}.
This approach is thus related to the $\Gamma$-regularization in the calculus of variations, see, e.g., \cite[Chapter IX]{Ekeland:1999a}, although here we consider a more specific relaxation and pass to the biconjugate only in the nonconvex term rather than to the full biconjugate functional $\calJ^{**}$, which allows us to obtain explicit optimality conditions in the primal-dual  form \eqref{eq:formal_opt} that are useful for numerical computations.

In general, a solution to system~\eqref{eq:formal_opt} is not necessarily a minimizer of~\eqref{eq:formal_prob}, since for nonconvex $\calG$ we cannot rely on equality in the Fenchel--Young inequality (which requires the characterization of the convex subdifferential). In fact, a solution to problem~\eqref{eq:formal_prob} may not even exist. However, for the class of penalties we are interested in, it is possible to show that a solution to system~\eqref{eq:formal_opt} is \emph{suboptimal} in the sense that the corresponding functional value is within a certain distance of the infimum. 
This distance is given by the \emph{duality gap}
\begin{equation}\label{eq:dual_gap}
    \delta(u,p):=\calG(u) + \calG^*(p) - \scalprod{p,u}
\end{equation}
between $\calG$ and its Fenchel dual $\calG^*$. This gap is always non-negative by the Fenchel--Young inequality, and vanishes if $\calG$ is convex and $p\in\partial\calG(u)$.
\begin{lemma}\label{lem:dual_gap}
    Let $\calF$ satisfy \eqref{eq:assumption_f}, and let $(\bar u,\bar p)$ satisfy~\eqref{eq:formal_opt}. Then
    \begin{equation}\label{eq:dual_gap_j}
        \calJ(\bar u) \leq \calJ(u) + \delta(\bar u,\bar p) \quad\text{for all } u \in U.
    \end{equation}
\end{lemma}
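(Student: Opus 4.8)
The proof will rest on two standard convexity facts, used once each: the subgradient inequality for $\calF$ coming from the first relation of \eqref{eq:formal_opt}, and the Fenchel--Young inequality $\scalprod{\bar p,u}\le \calG(u)+\calG^*(\bar p)$, which holds for every $u\in U$ directly from the definition of $\calG^*$ as a supremum and requires no convexity of $\calG$. I note in advance that only the relation $-\bar p\in\partial\calF(\bar u)$ will enter the estimate; the second relation $\bar u\in\partial\calG^*(\bar p)$ is needed merely to ensure that $\partial\calG^*(\bar p)\neq\emptyset$, hence $\bar p\in\dom\calG^*$, so that $\calG^*(\bar p)\in\R$ (using also that $\calG$ is proper) and $\delta(\bar u,\bar p)$ is not vacuously $+\infty$ — and, in the concrete situations of later sections, to make $\delta(\bar u,\bar p)$ controllable.

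First I would dispose of the trivial case: if $\calG(\bar u)=+\infty$ or $\calG(u)=+\infty$, then the right-hand side of \eqref{eq:dual_gap_j} equals $+\infty$ (recall $\calF$ is real-valued, $\calG\ge 0$, $\calG^*(\bar p)\in\R$, and $\delta\ge 0$), so there is nothing to prove; hence assume both values finite. The subgradient inequality for $\calF$ at $\bar u$ with subgradient $-\bar p$ then reads $\calF(\bar u)+\scalprod{\bar p,\bar u}\le \calF(u)+\scalprod{\bar p,u}$ for all $u\in U$. Inserting the Fenchel--Young bound $\scalprod{\bar p,u}\le \calG(u)+\calG^*(\bar p)$ on the right and adding $\calG(\bar u)$ to both sides gives
\begin{equation*}
    \calJ(\bar u)=\calF(\bar u)+\calG(\bar u)\le \calF(u)+\calG(u)+\bigl(\calG(\bar u)+\calG^*(\bar p)-\scalprod{\bar p,\bar u}\bigr)=\calJ(u)+\delta(\bar u,\bar p),
\end{equation*}
which is the asserted inequality.

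There is no substantial obstacle here; the only points requiring a little care are the bookkeeping with extended-real values in the case distinction above and the observation that nonemptiness of $\partial\calG^*(\bar p)$ forces $\bar p\in\dom\calG^*$, so that the duality gap $\delta(\bar u,\bar p)$ is a well-defined, finite, nonnegative quantity whenever $\calG(\bar u)<\infty$.
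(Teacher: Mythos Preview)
Your proof is correct and follows essentially the same approach as the paper: both rely on the subgradient inequality for $\calF$ at $\bar u$ (from $-\bar p\in\partial\calF(\bar u)$) and the Fenchel--Young inequality for $\calG$, combined additively. The paper organizes this as two separate inequalities---one showing $\calF(u)-\calF(\bar u)-\scalprod{-\bar p,u-\bar u}\ge 0$ and one showing $\calG(u)-\calG(\bar u)-\scalprod{\bar p,u-\bar u}\ge -\delta(\bar u,\bar p)$---and then sums them, whereas you chain the substitutions in a single line; the content is identical.
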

\begin{proof}
    Assume that $(\bar u,\bar p)$ is a solution to system~\eqref{eq:formal_opt} and let $u\in U$ be arbitrary. Recall that the first relation of~\eqref{eq:formal_opt} then implies that
    \begin{equation}\label{eq:dual_gap_f}
        \calF(u) - \calF(\bar u) - \scalprod{-\bar p,u-\bar u} \geq 0.
    \end{equation}
    Furthermore, by definition~\eqref{eq:dual_gap} and the Fenchel--Young inequality (which holds for any proper $\calG$) we have that
    \begin{equation}
        \calG(u) - \calG(\bar u) -  \scalprod{\bar p,u-\bar u} = \calG(u) -  \scalprod{\bar p,u} + \calG^*(\bar p) -\delta(\bar u,\bar p) \geq -\delta(\bar u,\bar p).
    \end{equation}
    Hence,
    \begin{equation}
        \begin{split}
            \begin{aligned}[b]
                \calJ(u)-\calJ(\bar u) &=  (\calF(u)+\calG(u)) - (\calF(\bar u) + \calG(\bar u))\\
                                       &=
                (\calF(u)-\calF(\bar u) -\scalprod{-\bar p,u-\bar u}) + (\calG(u)-\calG(\bar u) -\scalprod{\bar p,u-\bar u}) \\
                &\geq  -\delta(\bar u,\bar p).
            \end{aligned}
            \qedhere
        \end{split}
    \end{equation}
\end{proof}

\bigskip

Since the subdifferential $\partial\calG^*$ is in general multivalued and not Lipschitz continuous, system~\eqref{eq:formal_opt} is not amenable to numerical solution. We therefore introduce the \emph{Moreau--Yosida regularization} of $\partial\calG^*$:
\begin{equation}\label{eq:yosida}
    u = (\partial\calG^*)_\gamma(p) := \frac1\gamma\left(p-\prox_{\gamma\calG^*}(p)\right),
\end{equation}
where
\begin{equation}
    \prox_{\gamma f}(v) = \arg\min_{w} f(w) + \frac1{2\gamma}\norm{w-v}^2
\end{equation}
is the \emph{proximal mapping} of $f$; see \cite{Moreau:1965}.
We recall the following properties of $\prox_{\gamma f}$ and $(\partial f)_\gamma$,  e.g., from \cite[Props.~12.29, 12.15, 23.10, 23.43, 12.9, 16.34]{Bauschke:2011}; see also \cite[Chapter 4.4]{Kunisch:2008a}.

\begin{proposition}\label{thm:convex:my}
    Let $f:H\to\R\cup\{\infty\}$ be a proper convex function on a Hilbert space $H$. Then,
    \begin{enumerate}[(i)]
        \item $(\partial f)_\gamma = (f_\gamma)'$, where
            \begin{equation}
                f_\gamma(v) =  f(\prox_{\gamma f}(v)) + \frac1{2\gamma} \norm{\prox_{\gamma f}(v)-v}^2
            \end{equation}
            is the \emph{Moreau-envelope} of $f$, which is real-valued and convex.
        \item $(\partial f)_\gamma$ is single-valued, maximally monotone and Lipschitz-continuous with constant $\gamma^{-1}$,
        \item $\norm{(\partial f)_\gamma(v)}_H \leq \inf_{q\in\partial f(v)}\norm{q}_H$ for all $v\in H$,
        \item $  f\left(\prox_{\gamma f}(v)\right) \le f_\gamma(v) \le    f(v)$ for all $\gamma >0$ and $v\in H$,
        \item $\prox_{\gamma f} = (\Id + \gamma\partial f)^{-1}$ (the \emph{resolvent} of $\partial f$).
    \end{enumerate}
\end{proposition}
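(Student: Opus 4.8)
The plan is to funnel all five items through one structural fact: the resolvent $J_\gamma := (\Id+\gamma\partial f)^{-1}$ is a single-valued, firmly nonexpansive operator with full domain $H$, and it coincides with $\prox_{\gamma f}$. That $J_\gamma$ is well-defined and single-valued on all of $H$ is Minty's theorem applied to the maximally monotone operator $\partial f$ (maximal monotonicity of $\partial f$ holding since $f$ is proper, convex and --- as is implicit in the very use of $\prox_{\gamma f}$ --- lower semicontinuous): $\Id+\gamma\partial f$ is strongly monotone, hence injective with single-valued inverse, and surjective, hence that inverse has full domain. To identify $J_\gamma$ with $\prox_{\gamma f}$ I would write the optimality condition for the strongly convex (hence uniquely solvable) subproblem defining $\prox_{\gamma f}(v)$, namely $0\in\partial\big(f(\cdot)+\tfrac1{2\gamma}\norm{\cdot-v}^2\big)(w)$, apply the subdifferential sum rule, and rearrange to $v\in w+\gamma\partial f(w)$. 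This proves (v) together with the single-valuedness of $\prox_{\gamma f}$ asserted in (ii).

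Items (iv) and the descriptive content of (i) are then essentially immediate. The inequality $f_\gamma(v)\le f(v)$ follows by inserting $w=v$ as a competitor in the infimum defining $f_\gamma$, and $f(\prox_{\gamma f}(v))\le f_\gamma(v)$ is read off the explicit formula since the added squared-norm term is non-negative. Real-valuedness of $f_\gamma$ holds because a proper convex lower semicontinuous $f$ is minorized by a continuous affine functional, so the infimum over $w$ is finite; convexity of $f_\gamma$ is immediate from its representation as the infimal convolution of the convex functions $f$ and $\tfrac1{2\gamma}\norm{\cdot}^2$.

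The one calculation I would actually carry out is the differentiability claim in (i): $f_\gamma$ is Fréchet differentiable with $(f_\gamma)'(v)=\tfrac1\gamma(v-\prox_{\gamma f}(v))$. This is the standard sandwich argument --- bound $f_\gamma(v+h)-f_\gamma(v)$ from above by evaluating the envelope's objective at the suboptimal point $\prox_{\gamma f}(v)$, and from below by evaluating at $\prox_{\gamma f}(v+h)$ --- after which both bounds are seen to coincide with $\scalprod{\tfrac1\gamma(v-\prox_{\gamma f}(v)),h}$ up to a remainder of order $\norm{h}^2$, using nonexpansiveness of $\prox_{\gamma f}$ to control that remainder. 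Comparing with \eqref{eq:yosida} identifies $(f_\gamma)'$ with $(\partial f)_\gamma$, giving (i).

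Finally, for (ii) and (iii) I would write $(\partial f)_\gamma=\tfrac1\gamma(\Id-J_\gamma)$. Firm nonexpansiveness of $J_\gamma$ --- itself a one-line consequence of feeding the inclusions $u-J_\gamma u\in\gamma\partial f(J_\gamma u)$ and $v-J_\gamma v\in\gamma\partial f(J_\gamma v)$ into the monotonicity inequality for $\partial f$ --- transfers to $\Id-J_\gamma$, which yields monotonicity and $\gamma^{-1}$-Lipschitz continuity of $(\partial f)_\gamma$; being monotone, single-valued and continuous with full domain, it is maximally monotone, completing (ii). For (iii), apply monotonicity of $\partial f$ to $\tfrac1\gamma(v-J_\gamma v)\in\partial f(J_\gamma v)$ and to an arbitrary $q\in\partial f(v)$ to get $\tfrac1\gamma\norm{v-J_\gamma v}^2\le\scalprod{q,v-J_\gamma v}\le\norm{q}\,\norm{v-J_\gamma v}$, hence $\norm{(\partial f)_\gamma(v)}_H\le\norm{q}_H$, and take the infimum over $q\in\partial f(v)$. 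No step presents a genuine obstacle here; the only care required is keeping the hypotheses aligned (lower semicontinuity for solvability of the proximal subproblem, maximal monotonicity for Minty), so that in the final write-up it is legitimate to simply invoke \cite{Bauschke:2011} and \cite{Kunisch:2008a} for the individual assertions.
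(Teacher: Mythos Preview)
Your proposal is correct, and in fact substantially more detailed than what the paper does: the paper does not prove this proposition at all but simply \emph{recalls} it, citing \cite[Props.~12.29, 12.15, 23.10, 23.43, 12.9, 16.34]{Bauschke:2011} and \cite[Chapter 4.4]{Kunisch:2008a} for the individual items. Your closing sentence anticipates exactly this, so you are fully aligned with the paper's treatment.
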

From the last property, we can see that
\begin{equation}
    (\partial f)_\gamma = \frac1\gamma\left(\Id - (\Id + \gamma\partial f)^{-1}\right) = \partial f \circ (\Id + \gamma\partial f)^{-1},
\end{equation}
i.e., $ (\partial f)_\gamma$ is indeed the Moreau--Yosida regularization of $ \partial f$.

For brevity, we set $\calG_\gamma^*:=(\calG^*)_\gamma$ and $H_\gamma :=(\partial\calG^*)_{\gamma}$ from here on and consider 
the regularized optimality system
\begin{equation}\label{eq:formal_opt_reg}
    \left\{\begin{aligned}
            - p_\gamma &\in \partial\calF(u_\gamma),\\
            u_\gamma &= H_\gamma(p_\gamma).
    \end{aligned}\right.
\end{equation}
Arguing as in \cref{thm:existence}, existence of a solution follows from the fact that this system is the necessary optimality condition for the problem
\begin{equation}\label{eq:formal_prob_reg}
    \min_u \calF(u) + (\calG^*_\gamma)^*(u),
\end{equation}
using that $\calG^*_\gamma \leq \calG^*$ implies that $0\leq \calG^{**}\leq (\calG^*_\gamma)^*$ and that $H_\gamma = (\partial\calG^*)_\gamma$ is single-valued by \cref{thm:convex:my}\,(i,ii).
\begin{proposition}\label{thm:existence_reg}
    Under assumption \eqref{eq:assumption_f}, the system~\eqref{eq:formal_opt_reg} admits a solution $(u_\gamma,p_\gamma)\in U\times U$. If $\calF$ is strictly convex, this solution is unique.
\end{proposition}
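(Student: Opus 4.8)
The plan is to mirror the proof of \cref{thm:existence} line by line, now applied to the auxiliary problem \eqref{eq:formal_prob_reg}, and then to translate its optimality system into \eqref{eq:formal_opt_reg} using the properties of the Moreau--Yosida regularization collected in \cref{thm:convex:my}. The whole point of introducing \eqref{eq:formal_prob_reg} is that its ``$\calG$-part'' $(\calG^*_\gamma)^*$ is again a biconjugate-type convex functional, so the same machinery applies, while the dual relation it produces involves the single-valued map $H_\gamma$ rather than the set-valued $\partial(\calG^*)^*$.

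First I would check that $\calF + (\calG^*_\gamma)^*$ satisfies the three hypotheses needed for the direct method. Being a Fenchel conjugate, $(\calG^*_\gamma)^*$ is automatically convex and weakly lower semicontinuous, e.g.\ by \cite[Proposition 13.11]{Bauschke:2011}; it is proper since $\calG^*_\gamma$ is real-valued (\cref{thm:convex:my}\,(i)), hence proper, convex, and lower semicontinuous, so its conjugate is as well. Moreover $\calG^*_\gamma \le \calG^*$ pointwise, and conjugation reverses this inequality to give $(\calG^*_\gamma)^* \ge \calG^{**} \ge 0$; thus $(\calG^*_\gamma)^*$ is non-negative, and combined with the radial unboundedness of $\calF + \calG^{**}$ assumed in \eqref{eq:assumption_f}, the functional $\calF + (\calG^*_\gamma)^*$ is radially unbounded too. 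A standard weak-compactness/subsequence argument then yields a minimizer $u_\gamma \in U$.

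Next, since $\dom\calF = U$, the same stability condition invoked in \cref{thm:existence} holds for the pair $\calF$, $(\calG^*_\gamma)^*$ (the set $\bigcup_{\lambda\ge 0}\lambda(\dom\calF - \dom(\calG^*_\gamma)^*) = U$ is a closed subspace), so the convex sum rule of \cite{Brezis:1986} applies and the minimizer satisfies $0 \in \partial\calF(u_\gamma) + \partial(\calG^*_\gamma)^*(u_\gamma)$; that is, there is a $p_\gamma \in \partial(\calG^*_\gamma)^*(u_\gamma)$ with $-p_\gamma \in \partial\calF(u_\gamma)$, which is the first relation of \eqref{eq:formal_opt_reg}. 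To obtain the second relation I would apply the subdifferential inversion rule \cite[Proposition 4.4.4]{Schirotzek:2007} to the proper, convex, lower semicontinuous function $\calG^*_\gamma$: the inclusion $p_\gamma \in \partial(\calG^*_\gamma)^*(u_\gamma)$ is equivalent to $u_\gamma \in \partial\calG^*_\gamma(p_\gamma)$, and by \cref{thm:convex:my}\,(i) the real-valued convex function $\calG^*_\gamma$ is everywhere differentiable with $\partial\calG^*_\gamma = (\calG^*_\gamma)' = (\partial\calG^*)_\gamma = H_\gamma$, whence $u_\gamma = H_\gamma(p_\gamma)$, the second relation of \eqref{eq:formal_opt_reg}.

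For uniqueness, if $\calF$ is strictly convex then $\calF + (\calG^*_\gamma)^*$ is strictly convex, so $u_\gamma$ is unique; the associated $p_\gamma$ is then pinned down by $-p_\gamma \in \partial\calF(u_\gamma)$, exactly as in \cref{thm:existence} (and it is literally uniquely determined whenever $\calF$ is Gâteaux differentiable, as is the case for the concrete cost in \eqref{eq:problem_motiv}). I do not anticipate a genuine obstacle here; the only step needing a little care is the passage from $\partial(\calG^*_\gamma)^*$ back to $H_\gamma$, and this is precisely where \cref{thm:convex:my}\,(i)--(ii) is used so that, in contrast to \eqref{eq:formal_opt} in \cref{thm:existence}, the resulting dual relation features the single-valued, $\gamma^{-1}$-Lipschitz map $H_\gamma$ — which is also what makes \eqref{eq:formal_opt_reg} amenable to the Newton-type solver discussed later.
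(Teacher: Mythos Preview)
Your proposal is correct and follows precisely the approach the paper sketches in the paragraph preceding the proposition: apply the argument of \cref{thm:existence} to the auxiliary problem \eqref{eq:formal_prob_reg}, use $\calG^*_\gamma \le \calG^*$ to get $(\calG^*_\gamma)^* \ge \calG^{**} \ge 0$ for radial unboundedness, and invoke \cref{thm:convex:my}\,(i),(ii) to convert the dual inclusion into the single-valued relation $u_\gamma = H_\gamma(p_\gamma)$. You have simply spelled out in full what the paper leaves as ``arguing as in \cref{thm:existence}.''
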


The convergence $(u_\gamma,p_\gamma)\to(\bar u,\bar p)$ as $\gamma\to 0$ requires additional assumptions on $\calF$ and $\calG$:
\begin{align}
    &\left\{\begin{aligned}
&\text{(i) } \calF \text{ is Fréchet differentiable, $\calF'$ has weakly closed graph, and}\\
    &\text{(ii) }\{\calF(u_\gamma)\}_{\gamma>0} \text{ bounded implies }  \{{\calF'(u_\gamma)}\}_{\gamma>0} \text{ bounded,}
        \end{aligned}\right.
        \label{eq:assumption_f_bd} \tag{\textsc{a}2} \\[0.5ex]
        &\{p_\gamma\}_{\gamma>0} \text{ bounded implies }  \big\{\inf_{q\in\partial\calG^*(p_\gamma)}\norm{q}_U\big\}_{\gamma>0} \text{ bounded.}
        \label{eq:assumption_g_bd} \tag{\textsc{a}3}
    \end{align}
    We point out that (\ref{eq:assumption_f_bd}\,ii) is generically satisfied for functionals of the type $\calF(u) = F(S(u))$, where
    \begin{enumerate}[(i)]
        \item $F:Y\to\R$ is radially unbounded on a Banach space $Y$,
        \item $F$ is Fr\'echet differentiable and $F'$ is bounded on bounded sets,
        \item $S:U\to Y$ is Fr\'echet differentiable and $S'(u)^*$ is uniformly bounded on $U$,
    \end{enumerate}
    since in this case boundedness of $\calF(u_\gamma)$ implies boundedness of $y_\gamma:=S(u_\gamma)$ and hence boundedness of $\calF'(u_\gamma) = S'(u_\gamma)^*F'(y_\gamma)$.
    In particular, it holds for many common tracking-type functionals of the form $F(y)=\frac12\norm{y-z}_Y^2$ and bounded linear control-to-state mappings $S$. In this case, $\calF'(u) = S^*(Su-z)$ and  (\ref{eq:assumption_f_bd}\,i) trivially holds.
    Assumption \eqref{eq:assumption_g_bd} is more restrictive but satisfied for the class of functionals we shall consider later on.
    \begin{proposition}\label{thm:convergence}
        If $\calF$ and $\calG$ satisfy assumptions~\eqref{eq:assumption_f}--\eqref{eq:assumption_g_bd}, the family $\{(u_\gamma,p_\gamma)\}_{\gamma>0}$ contains a subsequence converging weakly as $\gamma\to 0$ to a solution $(\bar u,\bar p)$ to system~\eqref{eq:formal_opt}. If $\calF$ is strictly convex, the whole sequence converges weakly.
    \end{proposition}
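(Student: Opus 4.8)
The plan is to establish uniform boundedness of the family $\{(u_\gamma,p_\gamma)\}_{\gamma>0}$ in $U\times U$, extract weakly convergent subsequences, and pass to the limit in the two relations of~\eqref{eq:formal_opt_reg}. For boundedness, since $(u_\gamma,p_\gamma)$ solves~\eqref{eq:formal_opt_reg}, $u_\gamma$ minimizes $\calF+(\calG^*_\gamma)^*$; comparing with a fixed $u_0\in\dom\calG$ and using the standard identity $(\calG^*_\gamma)^*=\calG^{**}+\frac\gamma2\norm{\cdot}^2$ (whence $(\calG^*_\gamma)^*\ge\calG^{**}\ge0$ and $(\calG^*_\gamma)^*(u_0)\le\calG(u_0)+\frac12\norm{u_0}^2$ for $\gamma\le1$) gives the uniform estimate $\calF(u_\gamma)+\calG^{**}(u_\gamma)\le\calF(u_0)+\calG(u_0)+\frac12\norm{u_0}^2$. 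Radial unboundedness of $\calF+\calG^{**}$ from~\eqref{eq:assumption_f} then bounds $\{u_\gamma\}$, hence $\{\calF(u_\gamma)\}$, and~(\ref{eq:assumption_f_bd}\,ii) together with differentiability of $\calF$ bounds $\{\calF'(u_\gamma)\}=\{-p_\gamma\}$; alternatively, once $\{p_\gamma\}$ is bounded, \eqref{eq:assumption_g_bd} and \cref{thm:convex:my}\,(iii) bound $\norm{u_\gamma}=\norm{H_\gamma(p_\gamma)}$. By reflexivity of $U$ we extract a subsequence with $u_\gamma\rightharpoonup\bar u$ and $p_\gamma\rightharpoonup\bar p$.

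The first relation $-p_\gamma=\calF'(u_\gamma)$ passes to the limit at once via the weakly closed graph in~(\ref{eq:assumption_f_bd}\,i), giving $-\bar p=\calF'(\bar u)\in-\partial\calF(\bar u)$. For the second relation the difficulty is that $\partial\calG^*$ is merely maximally monotone, so its graph need not be sequentially closed under joint weak convergence; I would therefore exploit the variational structure instead. From $u_\gamma=H_\gamma(p_\gamma)$ and \cref{thm:convex:my}\,(v) one obtains $u_\gamma\in\partial\calG^*(w_\gamma)$ with $w_\gamma:=\prox_{\gamma\calG^*}(p_\gamma)=p_\gamma-\gamma u_\gamma$, i.e.\ $p_\gamma-\gamma u_\gamma\in\partial\calG^{**}(u_\gamma)$; adding the first relation and using the sum rule for $\partial(\calF+\calG^{**})$ (legitimate since $\dom\calF=U$, as in \cref{thm:existence}) shows $-\gamma u_\gamma\in\partial(\calF+\calG^{**})(u_\gamma)$, so that $u_\gamma$ minimizes $\frac\gamma2\norm{\cdot}^2+\calF+\calG^{**}$.

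Consequently $\calF(u_\gamma)+\calG^{**}(u_\gamma)\le\calF(u)+\calG^{**}(u)+\frac\gamma2\norm{u}^2$ for every $u\in U$; passing to the $\liminf$ as $\gamma\to0$ along the subsequence and using weak lower semicontinuity of $\calF$ (from~\eqref{eq:assumption_f}) and of the convex $\calG^{**}$ yields $\calF(\bar u)+\calG^{**}(\bar u)\le\calF(u)+\calG^{**}(u)$, so $\bar u$ minimizes $\calF+\calG^{**}$ and hence $-\calF'(\bar u)\in\partial\calG^{**}(\bar u)$ by the sum rule. Since also $\bar p=-\calF'(\bar u)$ by the limit of the first relation, this reads $\bar p\in\partial\calG^{**}(\bar u)$, equivalently $\bar u\in\partial\calG^*(\bar p)$, so $(\bar u,\bar p)$ solves~\eqref{eq:formal_opt}; moreover $p_\gamma\rightharpoonup\bar p$ because every weak cluster point $\tilde p$ of the bounded $\{-\calF'(u_\gamma)\}$ satisfies $-\tilde p=\calF'(\bar u)=-\bar p$ by~(\ref{eq:assumption_f_bd}\,i). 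If $\calF$ is strictly convex then so is $\calF+\calG^{**}$, its minimizer $\bar u$ is unique, every weakly convergent subsequence of the bounded family $\{u_\gamma\}$ converges to this $\bar u$, and hence the whole family $\{(u_\gamma,p_\gamma)\}$ converges weakly. The step I expect to be the crux is the passage to the limit in the second relation: weak convergence of $u_\gamma$ and $p_\gamma$ alone does not allow closing the graph of $\partial\calG^*$ by monotonicity, and it is the reinterpretation of $u_\gamma$ as the minimizer of the Tikhonov-type convex functional $\frac\gamma2\norm{\cdot}^2+\calF+\calG^{**}$, combined with lower-semicontinuity arguments and the convex sum rule, that resolves it; a minor technical point is checking the hypothesis of~(\ref{eq:assumption_f_bd}\,ii), which is automatic for the nonnegative tracking-type $\calF$ of the motivating problem and in general follows from continuity of the finite-valued convex weakly lsc function $\calF$ on bounded sets.
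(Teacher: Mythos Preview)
Your argument is correct, and it differs from the paper's in two respects worth noting. For boundedness, the paper first bounds $\calF(u_\gamma)$ by comparing with $u=0$ (using only $(\calG^*_\gamma)^*(0)\le\inf_p\calG^*(p)$), invokes~(\ref{eq:assumption_f_bd}\,ii) to bound $\{p_\gamma\}$, and then uses~\eqref{eq:assumption_g_bd} together with \cref{thm:convex:my}\,(iii) to bound $\{u_\gamma\}$; you instead exploit the identity $(\calG^*_\gamma)^*=\calG^{**}+\tfrac\gamma2\norm{\cdot}^2$ and the radial unboundedness in~\eqref{eq:assumption_f} to bound $\{u_\gamma\}$ directly, so that your route does not actually require~\eqref{eq:assumption_g_bd}. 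For the passage to the limit in $u_\gamma=H_\gamma(p_\gamma)$, the paper stays on the operator level: from monotonicity of $\calF'$ it deduces
\[
\limsup_{n,m\to\infty}\ \scalprod{H_{\gamma_n}(p_{\gamma_n})-H_{\gamma_m}(p_{\gamma_m}),\,p_{\gamma_n}-p_{\gamma_m}}\le 0,
\]
and then invokes the Brezis--Crandall--Pazy lemma on Yosida approximations of maximal monotone operators to conclude $\bar u\in\partial\calG^*(\bar p)$. Your variational detour---recognizing $u_\gamma$ as the minimizer of $\tfrac\gamma2\norm{\cdot}^2+\calF+\calG^{**}$ and passing to the limit by weak lower semicontinuity to obtain a minimizer of $\calF+\calG^{**}$, whose optimality conditions yield the second relation---is more elementary and avoids that lemma, at the price of re-deriving the optimality system rather than closing the graph directly. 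Both approaches are clean; yours is closer in spirit to a $\Gamma$-convergence argument, the paper's to monotone-operator theory.
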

    \begin{proof}
        First, observe that
        \begin{equation}
            (\calG^*_\gamma)^*(0) = \sup_{p\in U} -\calG^*_\gamma(p) = \inf_{p\in U} \calG^*_\gamma(p) \leq \inf_{p\in U} \calG^*(p)
        \end{equation}
        by \cref{thm:convex:my}\,(iii). By the optimality of $u_\gamma$ we thus have for any $\gamma>0$ that
        \begin{equation}
            \calF(u_\gamma) \leq \calF(u_\gamma) + (\calG^*_\gamma)^*(u_\gamma) \leq \calF(0) + \inf_{p\in U} \calG^*(p).
        \end{equation}
        Hence, $\{\calF(u_\gamma)\}_{\gamma>0}$ is bounded, and assumption~\eqref{eq:assumption_f_bd}  yields that
        \begin{equation}
            \{p_\gamma\}_{\gamma>0} =   \{-\calF'(u_\gamma)\}_{\gamma>0}
        \end{equation}
        is bounded. From assumption~\eqref{eq:assumption_g_bd} together with \cref{thm:convex:my}\,(iii) it then follows that for every $\gamma>0$, we have that
        \begin{equation}
            \norm{u_\gamma}_U = \norm{H_\gamma(p_\gamma)}_U \leq \inf_{q\in \partial\calG^*(p_\gamma)}\norm{q}_U \leq C,
        \end{equation}
        i.e., $\{H_\gamma(p_\gamma)\}_{\gamma>0}$ and $\{u_\gamma\}_{\gamma>0}$ are bounded. Hence, there exist subsequences $\{u_{\gamma_n}\}_{n\in\N}$, $\{p_{\gamma_n}\}_{n\in\N}$ and $\{H_{\gamma_n}(p_{\gamma_n})\}_{n\in\N}$ converging weakly in $U$ to some $\hat u$, $\hat p$, and $\hat y$, respectively. The weak closedness of $\calF'$ then yields
        \begin{equation}
            \hat p = -\calF'(\hat u).
        \end{equation}
        For the second relation of system~\eqref{eq:formal_opt}, we first observe that due to the monotonicity of $\calF'$ and using both relations of system~\eqref{eq:formal_opt_reg}, we have for any $\gamma_1,\gamma_2>0$ that
        \begin{equation}
            \scalprod{H_{\gamma_1}(p_{\gamma_1}) - H_{\gamma_2}(p_{\gamma_2}),p_{\gamma_1}-p_{\gamma_2}} 
            = -\scalprod{u_{\gamma_1}-u_{\gamma_2},\calF'(u_{\gamma_1})-\calF'(u_{\gamma_2})}
            \leq 0,
        \end{equation}
        and hence that for any sequence $\{\gamma_n\}_{n\in\N}$ with $\gamma_n\to 0$,
        \begin{equation}
            \limsup_{n,m\to\infty} \ \scalprod{H_{\gamma_n}(p_{\gamma_n}) - H_{\gamma_m}(p_{\gamma_m}),p_{\gamma_n}-p_{\gamma_m}}\leq 0.
        \end{equation}
        Since $H_\gamma$ is monotone, we can apply
        \cite[Lemma 1.3(e)]{Brezis:1970} to obtain that $\hat u = \partial\calG^*(\hat p)$, i.e., $(\hat u,\hat p)$ satisfies system~\eqref{eq:formal_opt}.

        If $\calF$ is strictly convex, the solution to system~\eqref{eq:formal_opt} is unique, and the claim follows from a subsequence--subsequence argument.
    \end{proof}

    To conclude this section, we compare the Moreau--Yosida regularization with the following complementarity formulation of the second relation of system~\eqref{eq:formal_opt}: For any $\gamma>0$, we have that
    \begin{equation}
        \begin{aligned}
            u \in \partial\calG^*(p) &\Leftrightarrow p + \gamma u \in (\Id + \gamma\partial\calG^*)(p)\\
                                     &\Leftrightarrow p \in (\Id + \gamma\partial\calG^*)^{-1}(p+\gamma u)\\
                                     &\Leftrightarrow p = \prox_{\gamma\calG^*}(p+\gamma u)\\
                                     &\Leftrightarrow u 
            = \frac1\gamma\left((p+\gamma u)-\prox_{\gamma\calG^*}(p+\gamma u)\right)
            = (\partial\calG^*)_\gamma(p+\gamma u) = (\calG^*_\gamma)'(p+\gamma u),
        \end{aligned}
    \end{equation}
    see also \cite[Theorem 4.41]{Kunisch:2008a}. The subdifferential inclusion can thus be equivalently expressed as a nonlinear equation. While the subdifferential inclusion is explicit with respect to $u$, the nonlinear equation is implicit. Moreover, the appearance of $u$ in the proximal mapping rules out the effective use of
    semismooth Newton methods for the applications we have in mind. On the other hand, note that the Moreau--Yosida approximation~\eqref{eq:yosida} differs only in the absence of $\gamma u$ on the right hand side of the last equality. Hence semismooth Newton methods will be applicable.

    \section{Switching cost functional \texorpdfstring{$\scriptstyle g$}{g}}\label{sec:pointwise}

    To make practical use of the proposed approach, we require an explicit, pointwise, characterization of $\partial\calG^*$ and $(\partial\calG^*)_\gamma$. For this, we exploit the integral nature of functionals of the~type
    \begin{equation}
        \calG (u) = \int_D g(u(x)) \,dx
    \end{equation}
    with $D\subset  \R^d,$ for some $d \ge 1$, which allows computing the Fenchel conjugate and its subdifferential pointwise as well; see, e.g., \cite[Props.~IV.1.2, IX.2.1]{Ekeland:1999a}, \cite[Prop.~16.50]{Bauschke:2011}.

    Specifically, we consider here the switching cost functional on $\R^2$,
    \begin{equation}\label{eq:pointwise:switching}
        g(v) = \frac\alpha2(v_1^2 + v_2^2) + \beta |v_1 v_2|_0\,.
    \end{equation}
    Other penalties of this class are discussed in \cref{sec:binary}.
    The use of the term $|v_1v_2|_0$ enhances switching between the control variables $v_1$ and $v_2$ in such a manner that simultaneous nontriviality of both of them is penalized.  We shall give sufficient conditions which guarantee that in fact $v_1$ and $v_2$ are not simultaneously nontrivial except for a singular set of controls for which $|v_1| =|v_2| \leq \sqrt{2\beta/\alpha}$.

    \subsection{Fenchel conjugate of \texorpdfstring{$\scriptstyle g$}{g}}\label{sec:switching:conjugate}

    To characterize 
    \begin{equation}\label{eq:switching:fenchel_pt}
        g^*(q) = \sup_{v\in\R^2}v\cdot q-g(v),
    \end{equation}
    first note that the function $v\mapsto g(v) - v\cdot q$ is lower semicontinuous 
    and radially unbounded. The supremum in \eqref{eq:switching:fenchel_pt} is thus attained at some $\bar v\in \R^2$. We then discriminate the following~cases:
    \begin{enumerate}[(i)]
        \item $\bar v_1= 0$, in which case $g(\bar v) = \frac\alpha2 \bar v_2^2$.
            The supremum in~\eqref{eq:switching:fenchel_pt} is attained if and only if the necessary optimality condition $q_2 - \alpha \bar v_2=0$ holds. Solving for $\bar v_2$ and inserting into~\eqref{eq:switching:fenchel_pt} yields
            \begin{equation}
                g^*(q) = \frac1{2\alpha} q_2^2.
            \end{equation}
        \item $\bar v_2 = 0$, in which case $g(\bar v) = \frac\alpha2 \bar v_1^2$.
            By the same argument as in case (i) we obtain
            \begin{equation}
                g^*(q) = \frac1{2\alpha} q_1^2.
            \end{equation}
        \item $\bar v_1,\bar v_2\neq 0$, in which case $g(\bar v) = \frac\alpha2 (\bar v_1^2+\bar v_2^2)+\beta$.
            Again, using the necessary optimality condition for the supremum in~\eqref{eq:switching:fenchel_pt} yields
            \begin{equation}
                g^*(q) = \frac1{2\alpha} (q_1^2+q_2^2)-\beta.
            \end{equation}
    \end{enumerate}
    It remains to decide which of these cases is attained based on the value of $q$. For this purpose,~define
    \begin{equation}
        g_i^*(q) = 
        \begin{cases}
            \frac1{2\alpha} q_i^2 & \text{if } i \in\{1,2\},\\
            \frac1{2\alpha} (q_1^2+q_2^2)-\beta & \text{if } i = 0.
        \end{cases}
    \end{equation}
    Since all $g_i^*$ are finite, the supremum in~\eqref{eq:switching:fenchel_pt} is attained at
    \begin{equation}
        g^*(q) = \max_{i\in\{0,1,2\}} g_i^*(q).
    \end{equation}
    From the definition, we have that $g_1^*(q) \geq g_2^*(q)$ if $|\bar v_1| \geq |\bar v_2|$ and $g_1^*(q) \geq g_0^*(q)$ if $|\bar v_2|\leq \sqrt{2\alpha\beta}$; similarly for $g_2^*(q)$. Conversely, $g_0^*(q) \geq g_i^*(q)$ if $|\bar v_j|\leq \sqrt{2\alpha\beta}$, $j=1,2$.
    Summarizing the above, we have
    \begin{equation+}\label{eq:switching:fenchel}
        g^*(q) =  
        \begin{cases}
            \frac1{2\alpha} q_1^2 & \text{if } |q_1|\geq |q_2| \text{ and }|q_2|\leq \sqrt{2\alpha\beta},\\
            \frac1{2\alpha} q_2^2 & \text{if } |q_1|\leq |q_2| \text{ and }|q_1|\leq \sqrt{2\alpha\beta},\\
            \frac1{2\alpha} (q_1^2+q_2^2)-\beta & \text{if } |q_1|,|q_2| \geq \sqrt{2\alpha\beta}.
        \end{cases}
    \end{equation+}

    \subsection{Subdifferential of \texorpdfstring{$\scriptstyle g^*$}{g}}\label{sec:switching:subdifferential}

    Since $g^*$ is the maximum of a finite number of convex functions, its subdifferential is given by 
    \begin{equation}
        \partial g^*(q) = \overline{\mathrm{co}} \left(\bigcup_{\{i:g^*(q)= g_i^*(q)\}}\left\{ (g_{i}^*)'(q)\right\}\right),
    \end{equation}
    where $\overline{\mathrm{co}}$ denotes the closed convex hull; see, e.g., \cite[Corollary 4.3.2]{Hiriart:2001}. 
    We make a case distinction based on all possibilities for $g^*(q)=g_i^*(q)$, $i\in\{0,1,2\}$:
    \begin{enumerate}[(i)]
        \item $g^*(q) = g_1^*(q)$ only, which is the case if and only if
            \begin{equation}
                q\in Q_1 := \set{q\in\R^2}{|q_1| > |q_2| \text{ and } |q_2|< \sqrt{2\alpha\beta}}.
            \end{equation}
            Here the subdifferential is single-valued and given by
            \begin{equation}
                \partial g^*(q) = \left(\left\{\tfrac1\alpha q_1\right\},\left\{0\right\}\right).
            \end{equation}
        \item $g^*(q) = g_2^*(q)$ only, which is the case if and only if
            \begin{equation}
                q\in Q_2 := \set{q\in\R^2}{|q_2| > |q_1| \text{ and } |q_1|< \sqrt{2\alpha\beta}}.
            \end{equation}
            Here,
            \begin{equation}
                \partial g^*(q) = \left(\left\{0\right\},\left\{\tfrac1\alpha q_2\right\}\right).
            \end{equation}
        \item $g^*(q) = g_0^*(q)$ only, which is the case if and only if
            \begin{equation}
                q\in Q_0 := \set{q\in\R^2}{|q_1|,|q_2| >  \sqrt{2\alpha\beta}}.
            \end{equation}
            Here,
            \begin{equation}
                \partial g^*(q) = \left(\left\{\tfrac1\alpha q_1\right\},\left\{\tfrac1\alpha q_2\right\}\right).
            \end{equation}
        \item $g^*(q) = g_1^*(q) = g_0^*(q)\neq g_2^*(q)$, which is the case if and only if 
            \begin{equation}
                q\in Q_{10} := \set{q\in\R^2}{|q_1|> |q_2| = \sqrt{2\alpha\beta}}.
            \end{equation}
            Here, the subdifferential is given by the convex hull of $\{(g_1^*)'(q),(g_0^*)'(q)\}$, i.e.,
            \begin{equation}
                \partial g^*(q) = \left(\left\{\tfrac1\alpha q_1\right\},\left[0,\tfrac1\alpha q_2\right]\right).
            \end{equation}
            To keep the notation concise, we use the convention $[a,b]:=[\min\{a,b\},\max\{a,b\}]$ here and below.
        \item $g^*(q) = g_2^*(q) = g_0^*(q)\neq g_1^*(q)$, which is the case if and only if
            \begin{equation}
                q\in Q_{20} := \set{q\in\R^2}{|q_2|> |q_1| = \sqrt{2\alpha\beta}}.
            \end{equation}
            Here, 
            \begin{equation}
                \partial g^*(q) = \left(\left[0,\tfrac1\alpha q_1\right],\left\{\tfrac1\alpha q_2\right\}\right).
            \end{equation}
        \item  $g^*(q) = g_1^*(q) = g_2^*(q)$, which is the case if and only if 
            \begin{equation}
                q\in Q_{12} := \set{q\in\R^2}{|q_1| = |q_2| \leq \sqrt{2\alpha\beta}}.
            \end{equation}
            Here,
            \begin{equation}
                \partial g^*(q) = \set{\left(\tfrac{t}\alpha q_1,\tfrac{1-t}\alpha q_2\right)}{t\in[0,1]}.
            \end{equation}
            Note that this also includes the case $g^*(q) = g_1^*(q) = g_2^*(q)= g_0^*(q)$, since then $(g_0^*)'(q)\in  \partial g^*(q)$.
    \end{enumerate}
    Since $\R^2$ is the disjoint union of the sets $Q_i$ defined above, see \cref{fig:gconj}, we thus obtain a complete characterization of the subdifferential $\partial g^*(q)$. 
    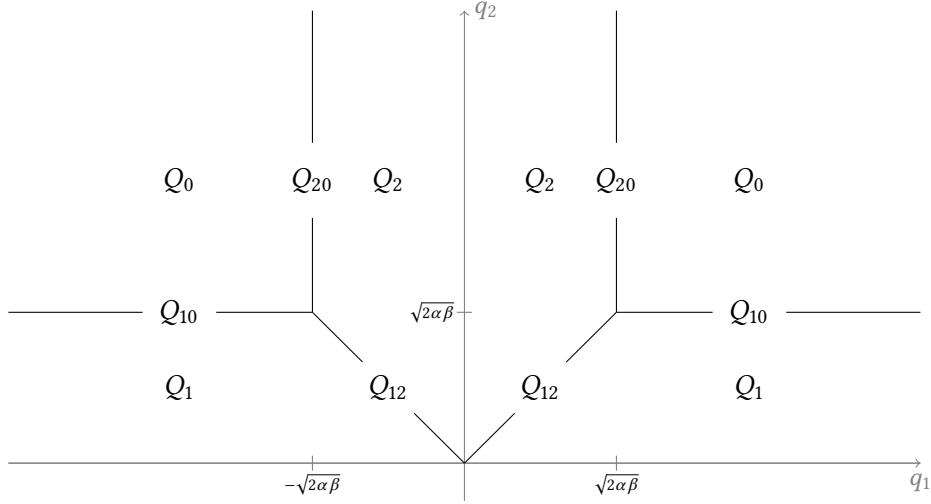
\begin{figure}
        \centering
        \begin{tikzpicture}[]
    \draw[gray,->](0,-2.5) -- (0,4) node[right] {\small $q_2$};
    \draw[gray,->](-6,-2) -- (6,-2) node[below] {\small $q_1$};
    \draw[gray](-6,-2) -- (6,-2);
    \draw[gray](2,-1.9) -- (2,-2.1);
    \draw[gray](-2,-1.9) -- (-2,-2.1);
    \draw[gray](-0.1,0) -- (0.1,0);
    \draw(-2,0) -- (0,-2);
    \draw(2,0) -- (0,-2);
    \draw(-6,0) -- (-2,0);
    \draw(2,0) -- (6,0);
    \draw(-2,0) -- (-2,4);
    \draw(2,0) -- (2,4);
    \draw (-3.75,1.75) node {$Q_0$};
    \draw (3.75,1.75) node {$Q_0$};
    \draw (-1,1.75) node {$Q_{2}$};
    \draw (1,1.75) node {$Q_{2}$};
    \draw (3.75,-1) node {$Q_{1}$};
    \draw (-3.75,-1) node {$Q_{1}$};

    \draw (-1,-1) node[circle,fill=white] {$Q_{12}$};
    \draw (1,-1) node[circle,fill=white] {$Q_{12}$};
    \draw (-3.75,0) node[circle,fill=white] {$Q_{10}$};
    \draw (3.75,0) node[circle,fill=white] {$Q_{10}$};
    \draw (-2,1.75) node[circle,fill=white] {$Q_{20}$};
    \draw (2,1.75) node[circle,fill=white] {$Q_{20}$};

    \draw (2,-2) node[below] {\tiny $\sqrt{2\alpha\beta}$};
    \draw (-2,-2) node[below] {\tiny $-\sqrt{2\alpha\beta}$};
    \draw (0,0) node[left] {\tiny $\sqrt{2\alpha\beta}$};

\end{tikzpicture}
        \caption{Subdomains $Q_i\subset\R^2$ for the definition of $\partial g^*$.}
        \label{fig:gconj}
    \end{figure}

    \subsection{Proximal mapping of \texorpdfstring{$\scriptstyle g^*$}{g}}\label{sec:switching:resolvent}

    For the Moreau--Yosida regularization or the complementarity formulation, we need to compute the proximal mapping of $g^*$ or, equivalently, the resolvent of $\partial g^*$.
    For given $\gamma>0$ and $v\in\R$, the resolvent $w:=(\mathrm{Id}+\gamma\partial g^*)^{-1}(v)$ is characterized by the subdifferential inclusion
    \begin{equation}\label{eq:resolvent}
        v\in (\mathrm{Id}+\gamma\partial g^*)(w) = \{w\}+\gamma\partial g^*(w).
    \end{equation}
    Note that this implies 
    \begin{equation}\label{eq:subdiff_inclusion}
        v \in \left[w,(1+\tfrac\gamma\alpha)w\right]\qquad\text{or equivalently that }\qquad w \in \left[\tfrac{\alpha}{\alpha+\gamma}v,v\right],
    \end{equation}
    and hence that $\sign(v_j) = \sign(w_j)$, $j=1,2$.
    We now follow the case discrimination in the characterization of the subdifferential.
    \begin{enumerate}[(i)]
        \item $w\in Q_1$:  In this case, the subdifferential inclusion \eqref{eq:resolvent}  yields $v_1 = (1+\frac\gamma\alpha)w_1$ and $v_2 = w_2$; solving for $w_1,w_2$ and inserting the result into the definition of $Q_1$ yields
            \begin{equation}
                w = \left(\tfrac\alpha{\alpha+\gamma}v_1,v_2\right) \quad \text{and} \quad |v_1|>(1+\tfrac\gamma\alpha)|v_2|,\quad |v_2| < \sqrt{2\alpha\beta}.
            \end{equation}

        \item $w\in Q_2$:  In this case,  $v_1 = w_1$ and $v_2 = (1+\frac\gamma\alpha)w_2$, and as in case (i) we have that
            \begin{equation}
                w = \left(v_1,\tfrac\alpha{\alpha+\gamma}v_2\right) \quad \text{and} \quad  |v_2|>(1+\tfrac\gamma\alpha)|v_1|,\quad |v_1| < \sqrt{2\alpha\beta}.
            \end{equation}

        \item $w\in Q_0$:  In this case,  $v_1 = (1+\frac\gamma\alpha)w_1$ and $v_2 = (1+\frac\gamma\alpha)w_2$, and hence
            \begin{equation}
                w = \left(\tfrac\alpha{\alpha+\gamma}v_1,\tfrac\alpha{\alpha+\gamma}v_2\right) \quad \text{and} \quad  |v_1|>(1+\tfrac\gamma\alpha)\sqrt{2\alpha\beta},\quad |v_2| > (1+\tfrac\gamma\alpha)\sqrt{2\alpha\beta}.
            \end{equation}

        \item $w\in Q_{10}$: In this case, $v_1 = (1+\frac\gamma\alpha)w_1$ and $v_2 \in [w_2,(1+\frac\gamma\alpha)w_2]$. Since $\sign(w_2)=\sign(v_2)$, we have from the definition of $Q_{10}$ that $w_2 = \sign(v_2)\sqrt{2\alpha\beta}$. Hence 
            \begin{equation}
                w = \left(\tfrac\alpha{\alpha+\gamma}v_1,\sign(v_2) \sqrt{2\alpha\beta}\right) \quad \text{and} \quad \sqrt{2\alpha\beta}\leq |v_2| \leq (1+\tfrac\gamma\alpha)\sqrt{2\alpha\beta},\quad |v_1|>(1+\tfrac\gamma\alpha)\sqrt{2\alpha\beta}.
            \end{equation}

        \item $w\in Q_{20}$: In this case,  $v_2 = (1+\frac\gamma\alpha)w_2$ and $v_1 \in [w_1,(1+\frac\gamma\alpha)w_1]$. As in (iv), we have that 
            \begin{equation}
                w = \left(\sign(v_1) \sqrt{2\alpha\beta},\tfrac\alpha{\alpha+\gamma}v_2\right) \quad \text{and} \quad \sqrt{2\alpha\beta}\leq |v_1| \leq (1+\tfrac\gamma\alpha)\sqrt{2\alpha\beta},\quad |v_2|>(1+\tfrac\gamma\alpha)\sqrt{2\alpha\beta}.
            \end{equation}

        \item $w\in Q_{12}$: 
            In this case, $v_1\in[w_1,(1+\frac\gamma\alpha)w_1]$ and $v_2 \in [w_2,(1+\frac\gamma\alpha)w_2]$. This does not yield an explicit value for $w$, although the definition of $Q_{12}$ implies that $|w_1|=|w_2|\leq \sqrt{2\alpha\beta}$.
            We therefore turn to the equivalent characterization of $w$ via the proximal mapping
            \begin{equation}
                w = \prox_{\gamma g^*}(v) = \mathop\mathrm{argmin}_{|z_1|=|z_2|\leq \sqrt{2\alpha\beta}} \frac1{2\gamma}|z-v|_2^2+ g^*(z).
            \end{equation}
            First, assume that $z_1=z_2=:z$ (which implies $\sign(v_1)=\sign(z)=\sign(v_2)$). The minimizer of the reduced problem is then given by the projection of the unconstrained minimizer $z =\frac\alpha{2\alpha+\gamma}(v_1+v_2)$ to the (convex) feasible set $[-\sqrt{2\alpha\beta},\sqrt{2\alpha\beta}]$, i.e.,
            \begin{equation}
                w = 
                \begin{cases}                         
                    \left(\tfrac\alpha{2\alpha+\gamma}(v_1+v_2),\tfrac\alpha{2\alpha+\gamma}(v_1+v_2)\right) & \text{if } \tfrac\alpha{2\alpha+\gamma}|v_1+v_2|\leq \sqrt{2\alpha\beta},\\
                    \left(\sign(v_1)\sqrt{2\alpha\beta},\sign(v_2)\sqrt{2\alpha\beta}\right) &\text{if } \tfrac\alpha{2\alpha+\gamma}|v_1+v_2|> \sqrt{2\alpha\beta}.
                \end{cases}
            \end{equation}
            Inserting each of these values for $w$ into the relation $v\in[w,(1+\tfrac\gamma\alpha)w]$ yields (after some algebraic manipulations) 
            \begin{equation}
                \tfrac{\alpha}{\alpha+\gamma}|v_2| \leq |v_1| \leq (1+\tfrac\gamma\alpha)|v_2|
            \end{equation}
            and 
            \begin{equation}
                \sqrt{2\alpha\beta} \leq |v_1|,|v_2| \leq (1+\tfrac\gamma\alpha)\sqrt{2\alpha\beta},
            \end{equation}
            respectively.

            We argue similarly for $z_1 = - z_2$ (where $\sign(v_1)=\sign(z)=-\sign(v_2)$). Combining the two cases, we obtain 
            \begin{equation}
                \begin{multlined}
                    w = \left(\sign(v_1)\tfrac\alpha{2\alpha+\gamma}(|v_1|+|v_2|),\sign(v_2)\tfrac\alpha{2\alpha+\gamma}(|v_1|+|v_2|)\right) \\ \text{and}\quad   \tfrac{\alpha}{\alpha+\gamma}|v_2| \leq |v_1| \leq (1+\tfrac\gamma\alpha)|v_2|,\quad |v_1|+|v_2|\leq (2+\tfrac\gamma\alpha)\sqrt{2\alpha\beta},
                \end{multlined}
            \end{equation}
            and 
            \begin{equation}
                \begin{multlined}
                    w = \left(\sign(v_1)\sqrt{2\alpha\beta},\sign(v_2)\sqrt{2\alpha\beta}\right)  \\  \text{and}\quad \sqrt{2\alpha\beta} \leq |v_1|,|v_2| \leq (1+\tfrac\gamma\alpha)\sqrt{2\alpha\beta},\quad |v_1|+|v_2|>(2+\tfrac\gamma\alpha) \sqrt{2\alpha\beta}.
                \end{multlined}
            \end{equation}
    \end{enumerate}
    Inserting this into the definition of the Moreau--Yosida regularization
    \begin{equation}
        (\partial g^*)_\gamma(q) = \frac1\gamma\left(q-\prox_{\gamma g^*}(q)\right)
    \end{equation}
    and simplifying yields
    \begin{equation}\label{eq:h_gamma}
        (\partial g^*)_\gamma(q) =
        \begin{cases}
            \left(\frac1{\alpha+\gamma}q_1,0\right) & \text{if } q \in Q_1^\gamma,\\
            \left(0,\frac1{\alpha+\gamma}q_2\right) & \text{if } q \in Q_2^\gamma,\\
            \left(\frac1{\alpha+\gamma}q_1,\frac1{\alpha+\gamma}q_2\right) & \text{if }q \in Q_0^\gamma,\\
            \left(\frac1{\alpha+\gamma}q_1,\frac1\gamma\left(q_2-\sign(q_2)\sqrt{2\alpha\beta}\right)\right) & \text{if }q \in Q_{10}^\gamma,\\
            \left(\frac1\gamma\left(q_1-\sign(q_1)\sqrt{2\alpha\beta}\right),\frac1{\alpha+\gamma}q_2\right) & \text{if }q \in Q_{20}^\gamma,\\
            \left(\frac1\gamma\left(q_1-\sign(q_1)\sqrt{2\alpha\beta}\right),\frac1\gamma\left(q_2-\sign(q_2)\sqrt{2\alpha\beta}\right)\right) 
            & \text{if }q \in Q_{00}^\gamma,\\
            \left(\frac1\gamma\left(\tfrac{\alpha+\gamma}{2\alpha+\gamma}q_1-\sign(q_1)\tfrac{\alpha}{2\alpha+\gamma}|q_2|\right)\right.,\\
            \;\, \left.\frac1\gamma\left(\tfrac{\alpha+\gamma}{2\alpha+\gamma}q_2-\sign(q_2)\tfrac{\alpha}{2\alpha+\gamma}|q_1|\right)\right) 
            & \text{if }q \in Q_{12}^\gamma,\\
        \end{cases}
    \end{equation}
    where
    \begin{align}
        Q_1^\gamma &= \set{q}{|q_1|>(1+\tfrac\gamma\alpha)|q_2|\text{ and } |q_2| < \sqrt{2\alpha\beta}},\\
        Q_2^\gamma &= \set{q}{|q_2|>(1+\tfrac\gamma\alpha)|q_1|\text{ and } |q_1| < \sqrt{2\alpha\beta}},\\
        Q_0^\gamma &= \set{q}{|q_1|,|q_2|>(1+\tfrac\gamma\alpha)\sqrt{2\alpha\beta}},\\
        Q_{10}^\gamma &= \set{q}{|q_1|\in\left[\sqrt{2\alpha\beta},(1+\tfrac\gamma\alpha)\sqrt{2\alpha\beta}\right] \text{ and } |q_2|>(1+\tfrac\gamma\alpha)\sqrt{2\alpha\beta}},\\
        Q_{20}^\gamma &= \set{q}{|q_2|\in\left[\sqrt{2\alpha\beta},(1+\tfrac\gamma\alpha)\sqrt{2\alpha\beta}\right] \text{ and } |q_1|>(1+\tfrac\gamma\alpha)\sqrt{2\alpha\beta}},\\
        Q_{00}^\gamma &= \set{q}{|q_1|,|q_2|\in\left[\sqrt{2\alpha\beta},(1+\tfrac\gamma\alpha)\sqrt{2\alpha\beta}\right] \text{ and } |q_1|+|q_2|>(2+\tfrac\gamma\alpha)\sqrt{2\alpha\beta}},\\
        Q_{12}^\gamma &= \set{q}{|q_1|\in\left[\tfrac\alpha{\alpha+\gamma}|q_2|,(1+\tfrac\gamma\alpha)|q_2|\right] \text{ and }  |q_1|+|q_2| \leq (2+\tfrac\gamma\alpha)\sqrt{2\alpha\beta}},
    \end{align}
    see \cref{fig:greg}.

    \begin{figure}
        \centering
        \begin{tikzpicture}[]
    \draw[gray,->](0,-2.5) -- (0,4) node[right] {\small $q_2$};
    \draw[gray,->](-6,-2) -- (6,-2) node[below] {\small $q_1$};
    \draw[gray](-6,-2) -- (6,-2);
    \draw[gray](2,-1.9) -- (2,-2.1);
    \draw[gray](3.5,-1.9) -- (3.5,-2.1);
    \draw[gray](-2,-1.9) -- (-2,-2.1);
    \draw[gray](-3.5,-1.9) -- (-3.5,-2.1);
    \draw[gray](-0.1,0) -- (0.1,0);
    \draw[gray](-0.1,1.5) -- (0.1,1.5);
    \draw(-3.5,0) -- (0,-2);
    \draw(3.5,0) -- (0,-2);
    \draw(-2,1.5) -- (0,-2);
    \draw(2,1.5) -- (0,-2);
    \draw(-3.5,0) -- (-2,1.5);
    \draw(3.5,0) -- (2,1.5);
    \draw(-6,0) -- (-3.5,0);
    \draw(2,1.5) -- (6,1.5);
    \draw(-2,1.5) -- (-6,1.5);
    \draw(3.5,0) -- (6,0);
    \draw(-3.5,0) -- (-3.5,4);
    \draw(-2,1.5) -- (-2,4);
    \draw(3.5,0) -- (3.5,4);
    \draw(2,1.5) -- (2,4);
    \draw (-4.75,2.75) node {$Q_0^\gamma$};
    \draw (-2.75,2.75) node {$Q_{20}^\gamma$};
    \draw (-3,1) node {$Q_{00}^\gamma$};
    \draw (-2,-0) node{$Q_{12}^\gamma$};
    \draw (2,-0) node {$Q_{12}^\gamma$};
    \draw (-1,2.75) node {$Q_{2}^\gamma$};
    \draw (1,2.75) node {$Q_{2}^\gamma$};
    \draw (2.75,2.75) node {$Q_{20}^\gamma$};
    \draw (3,1) node {$Q_{00}^\gamma$};
    \draw (-4.75,0.75) node {$Q_{10}^\gamma$};
    \draw (4.75,0.75) node {$Q_{10}^\gamma$};
    \draw (4.75,2.75) node {$Q_{0}^\gamma$};
    \draw (4.75,-1) node {$Q_{1}^\gamma$};
    \draw (-4.75,-1) node {$Q_{1}^\gamma$};

    \draw (-2,-2) node[below] {\tiny $-\sqrt{2\alpha\beta}$};
    \draw (-3.5,-2) node[below] {\tiny $-\left(1+\tfrac\gamma\alpha\right)\sqrt{2\alpha\beta}$};
    \draw (2,-2) node[below] {\tiny $\sqrt{2\alpha\beta}$};
    \draw (3.5,-2) node[below] {\tiny $\left(1+\tfrac\gamma\alpha\right)\sqrt{2\alpha\beta}$};
    \draw (0,0) node[left] {\tiny $\sqrt{2\alpha\beta}$};
    \draw (0,1.5) node[left] {\tiny $\left(1+\tfrac\gamma\alpha\right)\sqrt{2\alpha\beta}$};

\end{tikzpicture}
        \caption{Subdomains $Q_i^\gamma\subset\R^2$ for the definition of $(\partial g^*)_\gamma$.}
        \label{fig:greg}
    \end{figure}

    This pointwise characterization allows obtaining expressions for the Moreau--Yosida approximation and the complementarity formulation of $u \in \partial \calG^*(p)$.

    \section{Optimality conditions and structure}\label{sec:optimality}

    We now discuss the properties of solutions $(\bar u,\bar p)$ to system~\eqref{eq:formal_opt}.
    Specifically, let 
    \begin{equation}
        U=L^2(D;\R^2) \qquad\text{ and }\qquad
        \calG:U\to\R,\quad
        \calG(u) = \int_D g(u(x))\,dx
    \end{equation}
    with $g$ given by~\eqref{eq:pointwise:switching}.
    The functional $\calF$ will be assumed to be a tracking term of the form
    \begin{equation}\label{eq:choice_f}
        \calF(u) = \frac12 \norm{Su - z}_{Y}^2
    \end{equation}
    for a Hilbert space $Y=Y^*$ (e.g., $Y=L^2([0,T]\times\Omega)$), given $z\in Y$, and a bounded linear control-to-observation mapping $S:U\to Y$. We further assume the existence of a Banach space $V\hookrightarrow L^r(D;\R^2)$ with $r>2$ such that the adjoint $S^*:Y\to U$ maps continuously into $V$. The optimality system~\eqref{eq:formal_opt} is then given by
    \begin{equation}\label{eq:opt_switching}
        \left\{\begin{aligned}
                \bar p &= -S^*(S\bar u-z),\\
                \bar u &\in \partial\calG^*(\bar p).
        \end{aligned}\right.
        \tag{OS}
    \end{equation}
    From \eqref{eq:gbiconj_bound} it follows that $\calG^{**}$ is radially unbounded. Hence, $\calF$ and $\calG$ satisfy assumption \eqref{eq:assumption_f}, and \cref{thm:existence} yields existence of a solution $(\bar u,\bar p)\in U\times U$ (which is unique if $S$ is injective).

    Using \cref{sec:switching:subdifferential} and the pointwise characterization of the subdifferential of integral functionals (see, e.g., \cite[Proposition 16.50]{Bauschke:2011}), the second relation in \eqref{eq:opt_switching} implies that for almost all $x\in D$,
    \begin{equation}\label{eq:opt_g}
        \begin{aligned}[t]
            \bar u(x) &\in  [\partial \calG^*(p)](x) = \partial g^*(p(x))\\
                      &=
            \begin{cases}
                \left(\left\{\tfrac1\alpha \bar p_1(x)\right\},\{0\}\right) & \text{if } \bar p(x) \in Q_1= \set{q}{|q_1| > |q_2| \text{ and } |q_2| < \sqrt{2\alpha\beta}},\\
                \left(\{0\},\left\{\tfrac1\alpha \bar p_2(x)\right\}\right) & \text{if } \bar p(x) \in Q_2= \set{q}{|q_2| > |q_1| \text{ and } |q_1| < \sqrt{2\alpha\beta}},\\
                \left(\left\{\tfrac1\alpha \bar p_1(x)\right\},\left\{\tfrac1\alpha \bar p_2(x)\right\}\right) & \text{if } \bar p(x) \in Q_0= \set{q}{|q_1|,|q_2| > \sqrt{2\alpha\beta}},\\
                \left(\left\{\tfrac1\alpha \bar p_1(x)\right\},\left[0,\tfrac1\alpha \bar p_2(x)\right]\right) & \text{if } \bar p(x) \in Q_{10}= \set{q}{|q_1| > |q_2|  \text{ and } |q_2| = \sqrt{2\alpha\beta}},\\
                \left(\left[0,\tfrac1\alpha \bar p_1(x)\right],\left\{\tfrac1\alpha \bar p_2(x)\right\}\right) & \text{if } \bar p(x) \in Q_{20}= \set{q}{|q_2| > |q_1|  \text{ and } |q_1| = \sqrt{2\alpha\beta}},\\
                \set{\left(\tfrac{t}{\alpha}\bar p_1(x),\tfrac{1-t}\alpha \bar p_2(x)\right)}{t\in[0,1]} & \text{if } \bar p(x) \in Q_{12}= \set{q}{|q_1| = |q_2|  \text{ and } |q_1| \leq \sqrt{2\alpha\beta}}.
            \end{cases}
        \end{aligned}
    \end{equation}
    We define the
    \emph{switching arc} (where at most one control is active, i.e., nonzero)
    \begin{align}
        \calA &= \set{x\in D}{\bar p(x) \in Q_1 \cup Q_2 \cup \{(0,0)\}},\\
        \intertext{the \emph{free arc} (where both controls are active)}
        \calI &= \set{x\in D}{\bar p(x) \in Q_0 \cup Q_{10} \cup Q_{20}},\\
        \intertext{and the \emph{singular arc}}
        \calS &= \set{x\in D}{\bar p(x) \in Q_{12}\setminus\{(0,0)\}}.
        \intertext{In a slight abuse of notation, we also introduce}
        \partial\calI &=  \set{x\in D}{\bar p(x)\in Q_{10}\cup Q_{20}}.
    \end{align}
    Clearly,
    \begin{equation}
        D = \calA \cup \calI \cup \calS.
    \end{equation}

    Let us address the question when the solution to system~\eqref{eq:opt_switching} will be optimal. For this purpose, we first estimate the duality gap \eqref{eq:dual_gap}.
    \begin{lemma}
        If $(\bar u,\bar p)\in U\times U$ satisfies $\bar u \in\partial\calG^*(\bar p)$,  then
        \begin{equation}
            \delta(\bar u,\bar p) \leq \beta |\partial\calI| + 2\beta |\calS|.
        \end{equation}
    \end{lemma}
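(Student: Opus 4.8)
The plan is to reduce the global duality gap to a pointwise quantity and then read off the required estimate from the characterization of $\partial g^*$ in \cref{sec:switching:subdifferential}. Since $\calG$ and its conjugate $\calG^*$ are integral functionals of $g$ and $g^*$, the biconjugate is $\calG^{**}(u)=\int_D g^{**}(u(x))\,dx$, and the inclusion $\bar u\in\partial\calG^*(\bar p)$ is equivalent — by the Fenchel--Young equality for the pair $(\calG^*,\calG^{**})$, which holds since $\calG^*$ is proper, convex and lower semicontinuous — to $\calG^*(\bar p)+\calG^{**}(\bar u)=\scalprod{\bar p,\bar u}$. Inserting this into the definition \eqref{eq:dual_gap} of $\delta$ I would obtain
\begin{equation}
    \delta(\bar u,\bar p)=\calG(\bar u)-\calG^{**}(\bar u)=\int_D\big(g(\bar u(x))-g^{**}(\bar u(x))\big)\,dx,
\end{equation}
with non-negative integrand, so that it suffices to bound $g(v)-g^{**}(v)$ at $v=\bar u(x)$ on each of the three arcs.

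The first ingredient is the elementary bound $0\le g(v)-g^{**}(v)\le \beta|v_1v_2|_0\le\beta$ for every $v\in\R^2$: the quadratic $v\mapsto\tfrac\alpha2(v_1^2+v_2^2)$ is a convex, lower semicontinuous minorant of $g$, hence $\tfrac\alpha2(v_1^2+v_2^2)\le g^{**}(v)\le g(v)$, and subtracting gives the claim. On the switching arc $\calA$, the pointwise characterization \eqref{eq:opt_g} shows that $\bar u(x)$ equals $(\tfrac1\alpha\bar p_1(x),0)$, $(0,\tfrac1\alpha\bar p_2(x))$, or $(0,0)$, so $\bar u_1(x)\bar u_2(x)=0$ and therefore $g(\bar u(x))-g^{**}(\bar u(x))=0$ there. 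On the subset $\calI\setminus\partial\calI$ of the free arc, \eqref{eq:opt_g} forces $\bar u(x)=\tfrac1\alpha\bar p(x)$ with $|\bar p_i(x)|>\sqrt{2\alpha\beta}$, i.e. $\alpha\bar u(x)=\bar p(x)\in Q_0$; testing $g^{**}(w)=\sup_q\,q\cdot w-g^*(q)$ with $q=\bar p(x)$ and using $g^*(\bar p(x))=\tfrac1{2\alpha}(\bar p_1(x)^2+\bar p_2(x)^2)-\beta$ from \eqref{eq:switching:fenchel} yields $g^{**}(\bar u(x))\ge\bar p(x)\cdot\bar u(x)-g^*(\bar p(x))=\tfrac\alpha2(\bar u_1(x)^2+\bar u_2(x)^2)+\beta=g(\bar u(x))$, so again the integrand vanishes.

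Hence the integrand is supported on $\partial\calI\cup\calS$, on which $g(\bar u(x))-g^{**}(\bar u(x))\le\beta$ by the first step, and integrating gives
\begin{equation}
    \delta(\bar u,\bar p)=\int_{\partial\calI\cup\calS}\big(g(\bar u(x))-g^{**}(\bar u(x))\big)\,dx\le\beta\,|\partial\calI|+\beta\,|\calS|\le\beta\,|\partial\calI|+2\beta\,|\calS|,
\end{equation}
which is the assertion (in fact with the slightly sharper $\beta\,|\calS|$ in place of $2\beta\,|\calS|$). The step I expect to need the most care is confirming that the integrand really vanishes on $\calA$ and on $\calI\setminus\partial\calI$ — equivalently, that $g^{**}$ agrees with $g$ at those values of $\bar u(x)$ — which is exactly where the explicit description of $g^*$ and $\partial g^*$ from \cref{sec:switching:conjugate,sec:switching:subdifferential} enters. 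A more hands-on alternative is to estimate the pointwise Fenchel--Young gap $g(\bar u(x))+g^*(\bar p(x))-\bar p(x)\cdot\bar u(x)$ directly, going through the six cases of \cref{sec:switching:subdifferential}: it is zero on $Q_1$, $Q_2$, $\{(0,0)\}$ and $Q_0$, and on $Q_{10}$, $Q_{20}$, $Q_{12}$ it reduces to a perfect square in the free parameter of $\bar u(x)$ that is bounded by $\beta$, using the constraints $|\bar u_j(x)|\le\sqrt{2\beta/\alpha}$ carried along from those cases.
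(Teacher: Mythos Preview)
Your proof is correct and in fact yields a sharper constant than the paper's. The paper proceeds exactly along the lines of your ``hands-on alternative'': it computes the pointwise Fenchel--Young defect $g(\bar u(x))+g^*(\bar p(x))-\bar p(x)\cdot\bar u(x)$ directly, going through the six cases $Q_1,Q_2,Q_0,Q_{10},Q_{20},Q_{12}$ and obtaining $0$ on the first three, a strict bound $<\beta$ on $Q_{10}\cup Q_{20}$, and a bound $\le 2\beta$ on $Q_{12}\setminus\{0\}$, then integrates.

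Your main route is different and more conceptual: you rewrite the global gap as $\int_D\bigl(g(\bar u)-g^{**}(\bar u)\bigr)\,dx$ via Fenchel--Young equality for the convex pair $(\calG^*,\calG^{**})$, and then exploit the quadratic minorant $\tfrac\alpha2|v|^2\le g^{**}(v)$ to get the uniform pointwise bound $g(v)-g^{**}(v)\le\beta|v_1v_2|_0\le\beta$. This avoids almost all case analysis; the only residual computation is checking that $g=g^{**}$ at the specific values $\bar u(x)$ takes on $\calI\setminus\partial\calI$, which you do by testing the biconjugate at $q=\bar p(x)\in Q_0$. The payoff is the improved estimate $\delta(\bar u,\bar p)\le\beta|\partial\calI|+\beta|\calS|$. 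The paper's case-by-case approach, by contrast, does not need $g^{**}$ at all (it is only computed later in an appendix), but pays for this with more algebra and a weaker constant on $\calS$. One small caveat on your closing remark: on $Q_{12}$ the defect is not literally a perfect square in $t$ but rather $\tfrac1\alpha(t^2-t)|\bar p_1|^2+\beta$, which is indeed $\le\beta$ since $t^2-t\le 0$ on $[0,1]$; your claimed bound is correct even if the description is slightly off.
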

    \begin{proof}
        We discriminate pointwise in the definition \eqref{eq:dual_gap} based on the value of $\bar p(x)$ for almost every $x\in D$.
        \begin{enumerate}[(i)]
            \item $\bar p(x) \in Q_1$. In this case, the relation~\eqref{eq:opt_g} yields $\bar u_1(x) = \frac1\alpha \bar p_1(x)$ and $\bar u_2(x) = 0$, and thus
                \begin{equation}
                    g(\bar u(x)) + g^*(\bar p(x)) - \bar p(x)\cdot \bar u(x) = \frac1{2\alpha}  \bar p_1(x)^2 +  \frac1{2\alpha}  \bar p_1(x)^2 -  \frac1\alpha  \bar p_1(x)^2 = 0.
                \end{equation}
            \item $\bar p(x) \in Q_2$. In this case, the relation~\eqref{eq:opt_g} yields $\bar u_1(x) = 0$ and $\bar u_2(x) =  \frac1\alpha \bar p_2(x)$, and thus
                \begin{equation}
                    g(\bar u(x)) + g^*(\bar p(x)) - \bar p(x)\cdot \bar u(x) = \frac1{2\alpha}  \bar p_2(x)^2 +  \frac1{2\alpha} \bar p_2(x)^2 -  \frac1\alpha \bar p_2(x)^2 = 0.
                \end{equation}
            \item $\bar p(x) \in Q_0$. In this case, the relation~\eqref{eq:opt_g} yields $\bar u_1(x) =  \frac1\alpha \bar p_1(x)$ and $\bar u_2(x) =  \frac1\alpha \bar p_2(x)$, and thus
                \begin{equation}
                    \begin{multlined}
                        g(\bar u(x)) + g^*(\bar p(x)) - \bar p(x)\cdot \bar u(x) = \frac1{2\alpha} (\bar p_1(x)^2+\bar p_2(x)^2) + \beta  + \frac1{2\alpha} (\bar p_1(x)^2 + \bar p_2(x)^2)\\ - \beta  -  \frac1\alpha ( \bar p_1(x)^2 + \bar p_2(x)^2) = 0.
                    \end{multlined}
                \end{equation}
            \item $\bar p(x) \in Q_{10}$. In this case, the relation~\eqref{eq:opt_g} yields $\bar u_1(x) =  \frac1\alpha \bar p_1(x)$ and $\bar u_2(x) \in [0, \frac1\alpha \bar p_2(x)]$. Assume first that $\bar p_2(x)$ is positive, and that  $0< \bar u_2(x) < \frac1\alpha \bar p_2(x)$ (otherwise argue as in case (i) or (iii)). Then,
                \begin{equation}
                    \begin{aligned}
                        g(\bar u(x)) + g^*(\bar p(x)) - \bar p(x)\cdot \bar u(x) &= \frac1{2\alpha} \bar p_1(x)^2 + \frac\alpha2 \bar u_2(x)^2 + \beta  + \frac1{2\alpha} \bar p_1(x)^2\\
                        \MoveEqLeft[-1]  -  \frac1\alpha \bar p_1(x)^2 -  \bar p_2(x)\bar u_2(x)\\
                        &= \frac\alpha2 u_2(x)^2 - \bar p_2(x)\bar u_2(x) + \beta.
                    \end{aligned}
                \end{equation}
                A simple calculus argument shows that the right-hand side is a monotonically decreasing function of $\bar u_2(x)$ on $(0,\frac1\alpha\bar p_2(x))$ and hence attains its supremum for $\bar u_2(x) = 0$, which implies that
                \begin{equation}
                    g(\bar u(x)) + g^*(\bar p(x)) - \bar p(x)\bar u(x)  < \beta
                \end{equation}
                for all $\bar u_2(x) \in (0,\frac1\alpha\bar p_2(x))$.
                For $\bar q_2(x)$ negative, we argue similarly.

            \item  $\bar p(x) \in Q_{20}$. In this case, the relation~\eqref{eq:opt_g} yields $\bar u_1(x) \in [0, \frac1\alpha \bar p_1(x)] $ and $\bar u_2(x)=  \frac1\alpha \bar p_2(x)$. Proceeding as in case (iv) yields
                \begin{equation}
                    g(\bar u(x)) + g^*(\bar p(x)) - \bar p(x)\bar u(x)  < \beta.
                \end{equation}

            \item  $\bar p(x) \in Q_{12}$. In this case, the relation~\eqref{eq:opt_g} yields $(\bar u_1(x),\bar u_2(x)) = \left(\frac{t}{\alpha}\bar p_1(x),\frac{1-t}{\alpha}\bar p_2(x)\right)$ for some $t\in[0,1]$. Furthermore, we have that $|\bar p_1(x)|=|\bar p_2(x)| \leq \sqrt{2\alpha\beta}$.         

                First, if $\bar p(x) = (0,0)\in Q_{12}$, this implies that $\bar u(x) = (0,0)$ and hence
                \begin{equation}
                    g(\bar u(x)) + g^*(\bar p(x)) - \bar p(x)\bar u(x) = 0.
                \end{equation}

                For $\bar p(x) \neq (0,0)$, we obtain
                \begin{equation}
                    \begin{aligned}
                        g(\bar u(x)) + g^*(\bar p(x)) - \bar p(x)\cdot \bar u(x) &= \frac\alpha2 \bar u_1(x)^2 + \frac\alpha2 \bar u_2(x)^2 + \beta  + \frac1{2\alpha} \bar p_1(x)^2\\
                        \MoveEqLeft[-1]  -  \bar p_1(x)\bar u_1(x) -  \bar p_2(x)\bar u_2(x)\\
                        &= \frac1{2\alpha}(t^2 - t +1)\bar p_1(x)^2 + \frac1{2\alpha}(t^2 - t)\bar p_2(x)^2 + \beta.
                    \end{aligned} 
                \end{equation}
                Both expressions in parentheses are convex quadratic functions of $t\in[0,1]$ and hence attain their supremum at $t=0$ and $t=1$. Together with $|\bar p_1(x)|\leq \sqrt{2\alpha\beta}$ this implies that
                \begin{equation}
                    g(\bar u(x)) + g^*(\bar p(x)) - \bar p(x)\bar u(x)  \leq 2\beta.
                \end{equation}
        \end{enumerate}
        Integrating over $D$ now yields the claim.
    \end{proof}

    From \cref{lem:dual_gap} we obtain the following characterization of (sub)optimality of solutions.
    \begin{theorem}
        If $(\bar u,\bar p)\in U\times U$ satisfies~\eqref{eq:opt_switching}, then for any $u\in U$,
        \begin{equation}
            \calJ(\bar u) \leq \calJ(u) + \beta(|\partial\calI|+2|\calS|).
        \end{equation}
        Hence if $\partial\calI$ and $\calS$ are sets of Lebesgue measure zero, $\bar u$ is a solution to~\eqref{eq:formal_prob}.
    \end{theorem}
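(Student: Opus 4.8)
The plan is to combine the two results already established in this section: the duality-gap estimate from the preceding lemma, which gives $\delta(\bar u,\bar p) \leq \beta|\partial\calI| + 2\beta|\calS|$ for any pair satisfying $\bar u \in \partial\calG^*(\bar p)$, and the suboptimality bound from \cref{lem:dual_gap}, which states that $\calJ(\bar u) \leq \calJ(u) + \delta(\bar u,\bar p)$ for all $u\in U$ whenever $(\bar u,\bar p)$ solves the primal-dual system \eqref{eq:formal_opt}. First I would note that a solution $(\bar u,\bar p)$ to \eqref{eq:opt_switching} is by construction a solution to \eqref{eq:formal_opt} with the present choice of $\calF$ and $\calG$ (the first relation of \eqref{eq:opt_switching} is exactly $-\bar p \in \partial\calF(\bar u)$ since $\calF$ is the smooth quadratic tracking term with $\calF'(\bar u) = S^*(S\bar u - z)$, and the second relation is identical). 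One must also check that \eqref{eq:assumption_f} holds so that \cref{lem:dual_gap} applies; this was already observed in the text just before \eqref{eq:opt_switching}, using that $\calG^{**}$ is radially unbounded by \eqref{eq:gbiconj_bound} together with the quadratic growth of $\calF$ and the nonnegativity and properness of $\calG$.

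Next I would simply chain the two estimates: for arbitrary $u\in U$,
\begin{equation}
    \calJ(\bar u) \leq \calJ(u) + \delta(\bar u,\bar p) \leq \calJ(u) + \beta|\partial\calI| + 2\beta|\calS| = \calJ(u) + \beta(|\partial\calI| + 2|\calS|),
\end{equation}
which is the asserted inequality. For the final sentence, if $|\partial\calI| = |\calS| = 0$ then the right-hand side collapses to $\calJ(u)$, so $\calJ(\bar u) \leq \calJ(u)$ for every $u\in U$; since $\bar u$ itself is admissible (indeed $\dom\calJ = U$ here), this says precisely that $\bar u$ is a global minimizer of $\calJ$ over $U$, i.e., a solution to \eqref{eq:formal_prob}.

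There is essentially no obstacle here — the theorem is a direct corollary of the two preceding results — so the only thing requiring a word of care is the bookkeeping that the hypotheses of \cref{lem:dual_gap} (namely \eqref{eq:assumption_f} and that $(\bar u,\bar p)$ solves \eqref{eq:formal_opt}) are genuinely in force under the standing assumptions of \cref{sec:optimality}, and that \eqref{eq:opt_switching} is literally an instance of \eqref{eq:formal_opt}. I would state this verification in one or two sentences and then write the two-line chain of inequalities above, concluding with the observation about $\bar u$ being admissible to close out the optimality claim.
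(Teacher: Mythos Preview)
Your proposal is correct and matches the paper's approach exactly: the paper states the theorem as an immediate consequence of \cref{lem:dual_gap} combined with the preceding lemma bounding the duality gap, and you have spelled out precisely this chaining along with the routine verification that \eqref{eq:opt_switching} is an instance of \eqref{eq:formal_opt} under assumption \eqref{eq:assumption_f}.
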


    We next investigate the behavior of $\calI$ and $\calS$ as $\beta \to \infty$. For this purpose, we denote by $(u_\beta,p_\beta)$ the solution to~\eqref{eq:opt_switching} for given $\beta>0$, with corresponding free arc $\calI_\beta$. Note that the value of $\beta$ does not appear in the relation~\eqref{eq:subdiff_inclusion} except as part of the case distinction, and hence $\beta\to\infty$ does not necessarily imply that $u_\beta \to 0$.
    \begin{theorem}\label{thm:singular_set}
        Let $\alpha>0$ be fixed and let $(u_\beta,p_\beta)$ satisfy~\eqref{eq:opt_switching}. Then, $|\calI_\beta|\to 0$ as $\beta\to\infty$. 
    \end{theorem}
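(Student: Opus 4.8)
The plan is to extract from the optimality system~\eqref{eq:opt_switching} a bound on $p_\beta$ in $U$ that is \emph{uniform} in $\beta$, and then to combine it with the elementary observation that the defining inequalities of $Q_0$, $Q_{10}$, and $Q_{20}$ force $|p_\beta(x)|$ to be of size at least $\sqrt{2\alpha\beta}$ on $\calI_\beta$; an application of Markov's inequality then squeezes $|\calI_\beta|$ to zero at the rate $\beta^{-1}$.

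For the uniform bound, I would first note that $u_\beta$ is in fact a minimizer of the convex functional $\calF+\calG^{**}$, since by \cref{thm:existence} the system~\eqref{eq:opt_switching} is precisely its (necessary, hence here also sufficient) optimality condition. Comparing the objective at $u_\beta$ with its value at $u=0$ and using $\calF(0)=\tfrac12\norm{z}_Y^2$ together with $\calG^{**}(0)=0$ (which holds because $\calG(0)=0$ and $0\le\calG^{**}\le\calG$) gives $\calF(u_\beta)+\calG^{**}(u_\beta)\le\tfrac12\norm{z}_Y^2$. Since $g(v)\ge\tfrac\alpha2(v_1^2+v_2^2)$ and biconjugation is order preserving, $\calG^{**}(u)\ge\tfrac\alpha2\norm{u}_U^2$ (this is also the content of~\eqref{eq:gbiconj_bound}), so that $\norm{u_\beta}_U\le\norm{z}_Y/\sqrt\alpha$ uniformly in $\beta$. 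Feeding this into $p_\beta=-S^*(Su_\beta-z)$ and using the boundedness of $S$ and $S^*$ yields $\norm{p_\beta}_U\le C$ with $C$ depending only on $\alpha$, $\norm{z}_Y$, $\norm{S}$, and $\norm{S^*}$, and in particular not on $\beta$.

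Next I would record the pointwise lower bound: for a.e.\ $x\in\calI_\beta$ one has $p_\beta(x)\in Q_0\cup Q_{10}\cup Q_{20}$, and in each of these three regions at least one component of $p_\beta(x)$ has modulus at least $\sqrt{2\alpha\beta}$, so $p_{\beta,1}(x)^2+p_{\beta,2}(x)^2\ge 2\alpha\beta$ there. Combining the two estimates,
\begin{equation*}
    2\alpha\beta\,|\calI_\beta|\le\int_{\calI_\beta}\bigl(p_{\beta,1}(x)^2+p_{\beta,2}(x)^2\bigr)\,dx\le\norm{p_\beta}_U^2\le C^2,
\end{equation*}
whence $|\calI_\beta|\le C^2/(2\alpha\beta)\to 0$ as $\beta\to\infty$, which is the assertion.

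The only delicate point is the uniformity in the first step. The naive estimate — inserting the pointwise bound $|u_\beta(x)|\le\tfrac1\alpha|p_\beta(x)|$, read off from the characterization of $\partial g^*$, into $p_\beta=-S^*(Su_\beta-z)$ — closes only under a smallness condition such as $\norm{S^*S}<\alpha$, which we do not want to impose. The way around this is to compare the relaxed objective at $u_\beta$ with its value at $u=0$ instead: the $\beta$-dependent penalty $\calG^{**}$ is nonnegative and vanishes at $0$, so it can only increase the objective, while its quadratic lower bound $\tfrac\alpha2\norm{\cdot}_U^2$ is $\beta$-independent and provides exactly the coercivity needed. Everything else is a one-line estimate.
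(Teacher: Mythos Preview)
Your argument is correct. Both your proof and the paper's begin with the same variational comparison---$u_\beta$ minimizes $\calF+\calG^{**}$, and testing against $u=0$ gives a $\beta$-independent upper bound---but they diverge in how this bound is cashed in. The paper works directly on the primal side: it uses the explicit form of $g^{**}$ from \cref{sec:biconjugate} to observe that $g^{**}(u_\beta(x))\ge\beta$ whenever $p_\beta(x)\in\overline{Q_0}\supset\calI_\beta$, so that $\beta|\calI_\beta|\le\calG^{**}(u_\beta)\le\calF(0)$ in one line. You instead pass to the dual side: from the quadratic lower bound~\eqref{eq:gbiconj_bound} you extract a uniform bound on $\norm{u_\beta}_U$, push it through the adjoint equation to bound $\norm{p_\beta}_U$, and then apply Chebyshev to the pointwise inequality $|p_\beta(x)|^2\ge 2\alpha\beta$ on $\calI_\beta$. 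Your route is slightly longer but has the advantage of using only the coarse lower bound $g^{**}(v)\ge\tfrac\alpha2|v|^2$ rather than the full casewise description of $g^{**}$; the paper's route is shorter and, as noted after the theorem, immediately yields the stronger observation that $\beta|\partial\calI_\beta|$ stays bounded. Both give the same rate $|\calI_\beta|=O(\beta^{-1})$.
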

    \begin{proof}
        We use the minimizing properties of $u_\beta$ with respect to $\calF+\calG^{**}$ by making use of $g^{**}$ computed in \cref{sec:biconjugate}; see \eqref{eq:gbiconj}.
        Note that from the subdifferential inclusion~\eqref{eq:opt_g}, we can see that $u_\beta(x) \in D_0$ if and only if $p_\beta(x) \in \overline{Q_0}$.
        Since $g^{**}(0) = 0$, we have that 
        \begin{equation}
            \calG^{**}(u_\beta) \leq \calF(u_\beta) + \calG^{**}(u_\beta) \leq \calF(0) =:K,
        \end{equation}
        i.e., the family $\{\calG^{**}(u_\beta)\}_{\beta>0}$ is bounded. 
        We thus have for the free arc
        \begin{equation}
            \calI_\beta = \set{x\in D}{|p_{\beta,1}(x)|, |p_{\beta,2}(x)|\geq\sqrt{2\alpha\beta}} = \set{x\in D}{|u_{\beta,1}(x)|,|u_{\beta,2}(x)| \geq\sqrt{\tfrac{2\beta}{\alpha}}}
        \end{equation}
        that
        \begin{equation}\label{eq:freearc_bounded}
            K \geq \int_D g^{**}(u_\beta(x))\,dx \geq \int_{\calI_\beta} \frac\alpha2\left(|u_{\beta,1}(x)|^2+|u_{\beta,2}(x)|^2\right)+\beta\,dx  \geq \beta |I_\beta|,
        \end{equation}
        where the right-hand side remains bounded as $\beta \to \infty$ if and only if the second term goes to zero as claimed.
    \end{proof}
    Note that $\partial\calI_\beta\subset\calI_\beta$ and hence, from the estimate~\eqref{eq:freearc_bounded}, the corresponding optimality gap $\beta|\partial\calI_\beta|$ remains bounded for $\beta\to \infty$.

    If $p_\beta$ is uniformly bounded pointwise almost everywhere, we can deduce that $\calI_\beta$ must vanish for some sufficiently large (finite) value of $\beta$. 
    \begin{theorem}
        If $V\hookrightarrow L^\infty(D)$, then there exists a $\beta_0>0$ such that $|\calI_\beta|=0$ for all $\beta\geq \beta_0$.
    \end{theorem}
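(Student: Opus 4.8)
The plan is to establish an a priori bound on $\norm{p_\beta}_{L^\infty(D;\R^2)}$ that is \emph{independent of $\beta$}, and then to observe that membership of a point $x$ in $\calI_\beta$ forces at least one component of $p_\beta(x)$ to have modulus exceeding $\sqrt{2\alpha\beta}$, which becomes impossible once $\beta$ is large enough.

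For the bound I would reuse the energy estimate already employed in the proof of \cref{thm:singular_set}. Since \eqref{eq:opt_switching} coincides with the optimality system \eqref{eq:formal_opt}, which characterizes the minimizers of the convex functional $\calF+\calG^{**}$, and since $\calG^{**}\geq 0$ with $\calG^{**}(0)=0$ (as $g(0)=0$), one obtains
\begin{equation}
    \tfrac12\norm{Su_\beta-z}_Y^2 = \calF(u_\beta) \leq \calF(u_\beta)+\calG^{**}(u_\beta) \leq \calF(0)=:K,
\end{equation}
with $K$ independent of $\beta$. Inserting the first relation of \eqref{eq:opt_switching}, using the assumed continuity of $S^*$ from $Y$ into $V$ together with the embedding $V\hookrightarrow L^\infty(D;\R^2)$, one then gets a constant $M>0$ — depending only on $K$, the operator norm of $S^*$ into $V$, and the embedding constant — such that $\norm{p_\beta}_{L^\infty(D;\R^2)}\leq M$ for every $\beta>0$.

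It remains to combine this with the structure of $\calI_\beta$. Recall that $\calI_\beta=\set{x\in D}{\bar p_\beta(x)\in Q_0\cup Q_{10}\cup Q_{20}}$; a glance at the definitions of these three sets shows that on each of them $\max\{|q_1|,|q_2|\}$ strictly exceeds $\sqrt{2\alpha\beta}$. Hence $x\in\calI_\beta$ implies $\max\{|p_{\beta,1}(x)|,|p_{\beta,2}(x)|\}>\sqrt{2\alpha\beta}$. Choosing $\beta_0:=M^2/(2\alpha)$, for any $\beta\geq\beta_0$ we have $\sqrt{2\alpha\beta}\geq M\geq\norm{p_\beta}_{L^\infty}$, so that the defining inequality of $\calI_\beta$ can be satisfied only on a Lebesgue-null set, i.e.\ $|\calI_\beta|=0$.

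The argument presents no essential difficulty beyond this bookkeeping; the only point that deserves attention is that every quantity entering $M$ — namely $\calF(0)$, the norm of $S^*$ as a map into $V$, and the embedding constant of $V\hookrightarrow L^\infty$ — is independent of $\beta$, so that $M$, and therefore $\beta_0$, can indeed be chosen uniformly. This is exactly what the tracking form \eqref{eq:choice_f} of $\calF$ and the standing hypotheses on $S$ guarantee.
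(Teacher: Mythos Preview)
Your argument is correct and follows essentially the same line as the paper's proof: obtain a $\beta$-independent $L^\infty$ bound on $p_\beta$ from the energy inequality $\calF(u_\beta)+\calG^{**}(u_\beta)\leq\calF(0)$ and the mapping properties of $S^*$, then observe that membership in $Q_0\cup Q_{10}\cup Q_{20}$ requires one component of $p_\beta$ to exceed $\sqrt{2\alpha\beta}$. The only (inessential) difference is that you bound $\norm{Su_\beta-z}_Y$ directly from $\calF(u_\beta)\leq K$, whereas the paper first bounds $\norm{u_\beta}_U$ via the coercivity of $\calG^{**}$ and then passes through $S$; your route is slightly more direct and avoids appealing to \eqref{eq:gbiconj_bound}.
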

    \begin{proof}
        Due to the estimate~\eqref{eq:freearc_bounded} and the definition of $\calG^{**}$, the family $\{u_\beta\}_{\beta>0}$ is bounded in $U$. Hence $\{Su_\beta\}_{\beta>0}$ and thus $\{F'(Su_\beta)\}_{\beta>0}$ are bounded in $Y$ and $Y^*$, respectively. Since $S^*$ maps continuously to $L^\infty(D)$, this implies that $\{p_\beta\}_{\beta>0} = \{-S^*F'(Su_\beta)\}_{\beta>0}$ is uniformly bounded pointwise almost everywhere by a constant $M>0$. Choosing $\beta_0$ such that $M>\sqrt{2\alpha\beta_0}$, we obtain from the subdifferential inclusion~\eqref{eq:opt_g} that $Q_0=Q_{10}=Q_{20}=\emptyset$, which yields the claim.
    \end{proof}

    \begin{remark}
        The above theorem is a result in the spirit of exact penalization as in, e.g., \cite{Gugat:2009}. However, it does not yield an exact penalization of the switching condition $u_1u_2=0$ almost everywhere since the singular set $\calS$ cannot be controlled fully. It appears difficult to give a sufficient condition for $\calS$ to be empty, since on this set neither $\calF(u)$ nor $\calG(u)$ yield enough information to decide \emph{which} component of $u$ should be active. On the other hand, since $|\bar p_1(x)|=|\bar p_2(x)|$ has to hold on the singular arc, we can expect $|\calS|$ to be small. We shall comment on the cardinality of $\calS$ for the numerical examples. Direct extensions of the concepts in \cite{Gugat:2009} are not possible, since sparsity-promoting or exact penalty functionals of the type $|\cdot|^p$ with $p\in [0,1]$ on the controls do not lead to well-posed optimal control problems.
    \end{remark}

    \section{Numerical solution}
    \label{sec:switching:solution}

    We return to the Moreau--Yosida regularization of the optimality system~\eqref{eq:opt_switching}: For given $\gamma>0$, find $(u_\gamma,p_\gamma)\in U\times U$ satisfying
    \begin{equation}\label{eq:opt_switching_reg}
        \left\{\begin{aligned}
                p_\gamma &= -S^*(S u_\gamma-z),\\
                u_\gamma &= H_\gamma(p_\gamma).
        \end{aligned}\right.
    \tag{OS$_\gamma$}
\end{equation}
Since $\calF'(u) = S^*(S u-z)$ is linear and bounded, assumption \eqref{eq:assumption_f_bd} is clearly satisfied; in addition, the explicit characterization of $\partial\calG^*$ in \cref{sec:pointwise} immediately yields that
$\inf_{q\in\partial\calG^*(p)}\norm{q}_U \leq \frac1\alpha\norm{p}_U$, and hence assumption \eqref{eq:assumption_g_bd} holds. From \cref{thm:existence_reg} and \cref{thm:convergence}, we thus obtain existence of a solution (which is unique if $S$ is injective) and convergence to a solution of \eqref{eq:opt_switching} as $\gamma \to 0$.
For later reference, we note that the mapping properties of $S^*$ imply that $p_\gamma \in V$.

\bigskip

The solution to~\eqref{eq:opt_switching_reg} can be computed using a semismooth Newton method. We first show that $H_\gamma$ is Newton-differentiable. Recall that $H_\gamma$ is defined pointwise almost everywhere by
\begin{equation}
    [H_\gamma(p)](x) = h_\gamma(p(x)) := (\partial g^*)_\gamma(p(x)),
\end{equation}
and that $h_\gamma$ is globally Lipschitz continuous with constant $\gamma^{-1}$ by \cref{thm:convex:my}\,(iii). Hence, $h_\gamma$ is directionally differentiable almost everywhere. In addition, $h_\gamma$ is piecewise differentiable, and hence its directional derivative 
\begin{equation}
    h_\gamma'(q;\delta q) := \lim_{t\to 0} \frac1t(h_\gamma(q+t\delta q)-h_\gamma(q))
\end{equation}
at $q$ in direction $\delta q$ satisfies
\begin{equation}
\lim_{|\delta q|\to 0} \frac1{|\delta q|} |h_\gamma'(q+\delta q;\delta q)-h_\gamma'(q;\delta q)| = 0\qquad\text{for almost all $q$.}
\end{equation}
Together we obtain that $h_\gamma$ is semismooth; see, e.g., \cite[Theorem 8.2]{Kunisch:2008a} or \cite[Proposition 2.7]{Ulbrich:2011}; see also \cite[Proposition 2.26]{Ulbrich:2011}.

This implies that the superposition operator $H_\gamma$ is Newton-differentiable from $V\hookrightarrow L^r(D;\R^2)$ to $L^2(D;\R^2)$ for any $r>2$; see, e.g., \cite[Example 8.12]{Kunisch:2008a} or \cite[Theorem 3.49]{Ulbrich:2011}. Its Newton derivative  will be denoted by $D_NH_\gamma:V\to U$, and it is given pointwise almost everywhere at $p$ in direction $\delta p$ by a measurable selection
\begin{equation}\label{eq:newton_superposition}
    [D_NH_\gamma(p)\delta p](x) \in \partial_C h_\gamma(p(x))\delta p(x),
\end{equation}
where $\partial_C h_\gamma(q)$ is the Clarke derivative, which for piecewise differentiable functions is given by the convex hull of the piecewise derivatives at each point. Specifically, for $h_\gamma$ given in \cref{sec:switching:resolvent}, a Newton derivative $D_N h_\gamma(q) \in \partial_C h_\gamma(q)$ is given by
\begin{equation}
    D_N h_\gamma(q) = 
    \begin{cases}
        \mathrm{diag}\left(\tfrac1{\alpha+\gamma},0\right)
        & \text{if }q\in Q_1^\gamma,\\
        \mathrm{diag}\left(0,\tfrac1{\alpha+\gamma}\right)
        & \text{if }q\in Q_2^\gamma,\\
        \mathrm{diag}\left(\tfrac1{\alpha+\gamma},\tfrac1{\alpha+\gamma}\right)
        & \text{if } q\in Q_0^\gamma,\\
        \mathrm{diag}\left(\tfrac1{\alpha+\gamma},\tfrac1\gamma\right)
        & \text{if } q\in Q_{10}^\gamma,\\
        \mathrm{diag}\left(\tfrac1\gamma,\tfrac1{\alpha+\gamma}\right)
        & \text{if } q\in Q_{20}^\gamma,\\
        \mathrm{diag}\left(\tfrac1{\gamma},\tfrac1{\gamma}\right)
        & \text{if } q\in Q_{00}^\gamma,\\
        \frac1{\gamma(2\alpha+\gamma)}
        \begin{pmatrix}
            (\alpha+\gamma) & \sign(q_1q_2){\alpha} \\ 
            \sign(q_1q_2){\alpha} & (\alpha+\gamma)  
        \end{pmatrix}
        & \text{if } q\in Q_{12}^\gamma,
    \end{cases}
\end{equation}
where $\mathrm{diag}(\cdot,\cdot)$ denotes the $2\times 2$ diagonal matrix with the given entries.

In the sequel, we shall require the following two properties of the Newton derivative.
\begin{lemma}\label{lem:newton_bounded}
    For all $p\in V$ and $\delta p \in V$, we have
    \begin{align}
        \scalprod{D_NH_\gamma(p)\delta p,\delta p}_{U} &\geq 0,\\
        \norm{D_NH_\gamma (p)\delta p}_{U} &\leq \frac1\gamma \norm{\delta p}_U.
    \end{align}
\end{lemma}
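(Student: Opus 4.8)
The plan is to verify both inequalities pointwise, exploiting the fact that $D_NH_\gamma(p)$ acts as the superposition operator induced by the matrix-valued function $q \mapsto D_Nh_\gamma(q)$, so that $\scalprod{D_NH_\gamma(p)\delta p,\delta p}_U = \int_D \bigl(D_Nh_\gamma(p(x))\delta p(x)\bigr)\cdot \delta p(x)\,dx$ and $\norm{D_NH_\gamma(p)\delta p}_U^2 = \int_D |D_Nh_\gamma(p(x))\delta p(x)|_2^2\,dx$. It therefore suffices to show, for each $q\in\R^2$ and each $\eta\in\R^2$, that $(D_Nh_\gamma(q)\eta)\cdot\eta \geq 0$ and $|D_Nh_\gamma(q)\eta|_2 \leq \gamma^{-1}|\eta|_2$; integrating over $D$ then gives the two claims. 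Since by \eqref{eq:newton_superposition} every admissible selection $[D_NH_\gamma(p)\delta p](x)$ lies in $\partial_C h_\gamma(p(x))\delta p(x)$, and $\partial_C h_\gamma(q)$ is the convex hull of the finitely many piecewise derivatives listed in \cref{sec:switching:solution}, it is enough by convexity (both $\eta\mapsto(M\eta)\cdot\eta$ on the cone of the relevant matrices and $M\mapsto|M\eta|_2$ being handled by the triangle inequality / linearity) to check the two estimates for each of the seven explicit matrices $D_Nh_\gamma(q)$ appearing in the case distinction.

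For the positivity, I would simply observe that in the diagonal cases the matrix $\mathrm{diag}(a,b)$ has entries $a,b\in\{0,\tfrac1{\alpha+\gamma},\tfrac1\gamma\}\subset[0,\infty)$, so $(M\eta)\cdot\eta = a\eta_1^2+b\eta_2^2\geq 0$; and in the $Q_{12}^\gamma$ case the matrix $\tfrac1{\gamma(2\alpha+\gamma)}\begin{pmatrix}\alpha+\gamma & s\alpha\\ s\alpha & \alpha+\gamma\end{pmatrix}$ with $s=\sign(q_1q_2)\in\{-1,1\}$ is symmetric positive semidefinite because its eigenvalues are $\tfrac{(\alpha+\gamma)\pm\alpha}{\gamma(2\alpha+\gamma)}$, both nonnegative (indeed $\tfrac{\gamma}{\gamma(2\alpha+\gamma)}=\tfrac1{2\alpha+\gamma}>0$ and $\tfrac{2\alpha+\gamma}{\gamma(2\alpha+\gamma)}=\tfrac1\gamma>0$). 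Hence $(M\eta)\cdot\eta\geq 0$ in all seven cases, and since a convex combination of positive-semidefinite matrices is positive semidefinite, the same holds for every element of $\partial_C h_\gamma(q)$.

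For the norm bound, the diagonal cases are immediate since $\|\mathrm{diag}(a,b)\|_2 = \max\{|a|,|b|\} \leq \tfrac1\gamma$ (using $\tfrac1{\alpha+\gamma}\leq\tfrac1\gamma$ and $\alpha>0$), so $|M\eta|_2\leq\tfrac1\gamma|\eta|_2$. For the $Q_{12}^\gamma$ matrix, its operator norm equals its largest eigenvalue $\tfrac1\gamma$ computed above, so again $|M\eta|_2\leq\tfrac1\gamma|\eta|_2$. Finally, an arbitrary element of $\partial_C h_\gamma(q)$ is a convex combination $\sum_j\lambda_j M_j$ of these matrices, so by the triangle inequality $\bigl|\sum_j\lambda_j M_j\eta\bigr|_2 \leq \sum_j\lambda_j|M_j\eta|_2 \leq \tfrac1\gamma|\eta|_2$; this bound passes to the measurable selection $[D_NH_\gamma(p)\delta p](x)$ and then, after squaring and integrating, to $\norm{D_NH_\gamma(p)\delta p}_U \leq \tfrac1\gamma\norm{\delta p}_U$. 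The only mildly delicate point is making sure the pointwise-to-integral passage is legitimate — i.e., that $x\mapsto D_Nh_\gamma(p(x))\delta p(x)$ is a genuine measurable element of $U=L^2(D;\R^2)$ — but this is exactly the content of the Newton-differentiability statement for superposition operators invoked just above (e.g.\ \cite[Theorem 3.49]{Ulbrich:2011}), together with $\delta p\in V\hookrightarrow L^r(D;\R^2)\hookrightarrow L^2(D;\R^2)$, so no real obstacle arises; the proof is essentially a bookkeeping exercise over the seven cases.
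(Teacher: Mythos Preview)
Your proof is correct, but it takes a genuinely different route from the paper's. You verify the two pointwise inequalities by an explicit case-by-case check of the seven matrices listed for $D_Nh_\gamma(q)$: the diagonal ones are positive semidefinite with operator norm at most $\gamma^{-1}$ by inspection, and for the $Q_{12}^\gamma$ case you compute the eigenvalues $\tfrac1{2\alpha+\gamma}$ and $\tfrac1\gamma$; you then pass to arbitrary elements of the Clarke subdifferential by convexity and the triangle inequality.

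The paper instead argues abstractly from properties of the Moreau--Yosida regularization recorded in \cref{thm:convex:my}: since $h_\gamma = (g^*_\gamma)'$ is the gradient of a convex function, it is monotone, so for all $t>0$ one has $\tfrac1{t^2}\bigl(h_\gamma(q+t\delta q)-h_\gamma(q)\bigr)\cdot(t\delta q)\geq 0$, and letting $t\to 0$ gives $h_\gamma'(q;\delta q)\cdot\delta q\geq 0$; similarly the global $\gamma^{-1}$-Lipschitz bound on $h_\gamma$ passes in the limit to $|h_\gamma'(q;\delta q)|\leq\gamma^{-1}|\delta q|$. The paper then notes that these inequalities automatically hold for every element of $\partial_C h_\gamma(q)$ and integrates.

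The paper's argument is cleaner and more robust: it never uses the explicit form of $h_\gamma$ and would apply verbatim to the Moreau--Yosida regularization of any proper convex function, which matters if one wants to reuse the lemma for the penalties in \cref{sec:binary}. Your argument, by contrast, is entirely elementary --- no limits, just linear algebra on $2\times 2$ matrices --- and makes the constants completely explicit, at the cost of being tied to this particular $g$.
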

\begin{proof}
    Recall from \cref{thm:convex:my} that $h_\gamma$ is the derivative of the convex functional $(g^*)_\gamma$ and hence is monotone. 
    Therefore we have for all $t>0$, almost all $q$, and all $\delta q$ that
    \begin{equation}
        0 \leq (h_\gamma(q+t\delta q)-h_\gamma(q))\cdot(q+t\delta q -q)
        = \frac1t \left(h(q+t\delta q) - h_\gamma(q)\right)\cdot (t^2\delta q).
    \end{equation}
    Dividing by $t^2>0$ and taking the limit as $t\to 0$ yields
    \begin{equation}\label{eq:newton_bounded1}
        h_\gamma'(q;\delta q)\cdot\delta q \geq 0.
    \end{equation}
    Similarly, since $h_\gamma$ is globally Lipschitz with constant $\gamma^{-1}$, we have  for all $t>0$, almost all $q$, and all~$\delta q$ that
    \begin{equation}
        \frac1t |h_\gamma(q+t\delta q)-h_\gamma(q)| \leq \frac1\gamma |\delta q|.
    \end{equation}
    Taking again the limit as $t\to 0$ yields
    \begin{equation}\label{eq:newton_bounded2}
        |h_\gamma' (q;\delta q)| \leq \frac1\gamma |\delta q|.
    \end{equation}   
    As a consequence, all elements in the Clarke derivative satisfy the inequalities~\eqref{eq:newton_bounded1} and~\eqref{eq:newton_bounded2}. Since $D_NH_\gamma(p)$ is taken as a measurable selection from $\partial_C h_\gamma(p(\cdot))$, the claim follows by substitution and integration over $D$.
\end{proof}

\bigskip

To apply a semismooth Newton method to \eqref{eq:opt_switching_reg}, we first introduce the state $y_\gamma:=S(u_\gamma)\in Y$ and eliminate $u_\gamma$, thus obtaining the equivalent optimality system
\begin{equation}\label{eq:optimality_red}
    \left\{\begin{aligned}
            y_\gamma &= S H_\gamma(p_\gamma),\\
            p_\gamma &= -S^* (y_\gamma-z).
    \end{aligned}\right.
\end{equation}
Considering the system~\eqref{eq:optimality_red} as an operator equation from $Y\times V$ to $Y\times V$, a semismooth Newton step for its solution consists in computing $(\delta y,\delta p)\in Y\times V$ for given $(y^k,p^k)\in Y\times V$ such that
\begin{equation}\label{eq:newton_step}
    \left\{\begin{aligned}
            \delta y - S D_NH_\gamma(p^k)\delta p &= -y^k+ S H_\gamma(p^k) ,\\
            \delta p + S^*\delta y &= -p^k - S^* (y^k-z),
    \end{aligned}\right.
\end{equation}
and setting $y^{k+1} = y^k +\delta y$ and $p^{k+1} = p^k + \delta p$. 

To show superlinear convergence of this iteration, it remains to show uniform solvability of each Newton step.
\begin{proposition}\label{thm:newton_bound}
    For any $(y,p)\in Y\times V$ and $(w_1,w_2)\in Y\times V$, the system
    \begin{equation}\label{eq:newton_solve}
        \left\{\begin{aligned}
                \delta y + S D_NH_\gamma(p)\delta p &= w_1 ,\\
                \delta p - S^*\delta y &= w_2,
        \end{aligned}\right.
    \end{equation}
    has a solution $(\delta y,\delta p)\in Y\times V$ which satisfies
    \begin{equation}
        \norm{\delta y}_{Y} + \norm{\delta p}_V \leq C(\norm{w_1}_{Y} + \norm{w_2}_{V}).
    \end{equation}
\end{proposition}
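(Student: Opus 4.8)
The plan is to reduce the block system~\eqref{eq:newton_solve} to a single equation on $Y$ by forming the Schur complement and then to invoke the Lax--Milgram lemma. First I would solve the second equation for $\delta p = w_2 + S^*\delta y$ and insert this into the first, which yields
\begin{equation}
    (\Id + S\,D_NH_\gamma(p)\,S^*)\,\delta y = w_1 - S\,D_NH_\gamma(p)\,w_2 \qquad \text{in } Y .
\end{equation}
Here $D_NH_\gamma(p)$ is read as the bounded linear operator on $U$ acting by pointwise multiplication with the matrix field $D_Nh_\gamma(p(\cdot))$, whose entries are bounded by $\gamma^{-1}$; since $S:U\to Y$ and $S^*:Y\to U$ are bounded, the operator $T:=\Id + S\,D_NH_\gamma(p)\,S^*$ is a bounded linear operator on the Hilbert space $Y$, with norm independent of $p$.

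The next step is to check coercivity of $T$. By \cref{lem:newton_bounded}, $D_NH_\gamma(p)$ is monotone on $U$, so for every $\eta\in Y$,
\begin{equation}
    \scalprod{T\eta,\eta}_Y = \norm{\eta}_Y^2 + \scalprod{D_NH_\gamma(p)\,S^*\eta,\, S^*\eta}_U \geq \norm{\eta}_Y^2 .
\end{equation}
Boundedness and coercivity of $T$ then give, by the Lax--Milgram lemma, a unique $\delta y\in Y$ solving the Schur equation, with $\norm{\delta y}_Y \leq \norm{w_1 - S\,D_NH_\gamma(p)\,w_2}_Y$. Using the bound $\norm{D_NH_\gamma(p)v}_U \leq \gamma^{-1}\norm{v}_U$ from \cref{lem:newton_bounded}, the boundedness of $S$, and the continuous embedding $V\hookrightarrow U$, this controls $\norm{\delta y}_Y$ by $C(\norm{w_1}_Y + \norm{w_2}_V)$ with $C$ independent of $p$.

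Finally I would recover $\delta p = w_2 + S^*\delta y$ and verify that it lies in the smaller space $V$ with the asserted bound. This is where the smoothing assumption on the adjoint, stated at the beginning of \cref{sec:optimality} — namely that $S^*:Y\to V$ is continuous — enters: it gives $\norm{S^*\delta y}_V \leq C\norm{\delta y}_Y$, hence
\begin{equation}
    \norm{\delta p}_V \leq \norm{w_2}_V + C\norm{\delta y}_Y \leq C(\norm{w_1}_Y + \norm{w_2}_V),
\end{equation}
which together with the estimate for $\delta y$ proves the claim. The step that requires care is precisely this bookkeeping of spaces: solvability and the estimate for $\delta y$ only use $D_NH_\gamma(p)$ as an $L^2$-multiplier, so \cref{lem:newton_bounded} is exactly what is needed, whereas obtaining $\delta p$ in $V$ — which is what makes~\eqref{eq:newton_step} a well-posed step in $Y\times V$, and hence the semismooth Newton iteration meaningful — rests on the regularizing mapping property of $S^*$. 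The constant $C$ inevitably depends on $\gamma$ through $\gamma^{-1}$, but since $\gamma$ is held fixed during the Newton iteration this is harmless; the crucial point is that $C$ does not depend on the linearization point $p$, so each Newton step is uniformly solvable.
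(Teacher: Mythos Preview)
Your proof is correct and follows essentially the same Schur-complement reduction as the paper's: eliminate $\delta p$, establish solvability of the reduced equation on $Y$ via monotonicity of $D_NH_\gamma(p)$ (\cref{lem:newton_bounded}), bound $\norm{\delta y}_Y$ by testing with $\delta y$, and recover $\delta p\in V$ through the smoothing $S^*:Y\to V$. The only cosmetic difference is that the paper invokes maximal monotonicity and Minty's theorem for the surjectivity of $\Id+SD_NH_\gamma(p)S^*$, whereas you use Lax--Milgram; since the operator is linear and bounded with coercivity constant $1$, both yield the same result (and Lax--Milgram additionally gives uniqueness for free).
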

\begin{proof}
    Eliminating $\delta p = S^*\delta y + w_2\in V$, we obtain that~\eqref{eq:newton_solve} is equivalent to
    \begin{equation}\label{eq:newton_red}
        \delta y + SD_NH_\gamma(p)S^*\delta y = w_1 + SD_NH_\gamma(p) w_2.
    \end{equation}
    Since $S^*$ is linear and bounded from $Y$ to $V$ and $D_NH_\gamma$ is monotone on $V$ from \cref{lem:newton_bounded}, the operator $SD_NH_\gamma(p)S^*$ is maximally monotone from $Y$ to $Y$; see, e.g., \cite[Propositions 20.10, 20.24]{Bauschke:2011}. Minty's theorem thus yields existence of a solution $\delta y\in Y$ and hence of a corresponding $\delta p \in V$; see, e.g.,  \cite[Proposition 21.1]{Bauschke:2011}.

    Taking the inner product of equation~\eqref{eq:newton_red} with $\delta y$ and using \cref{lem:newton_bounded} with $S^*\delta y\in V\hookrightarrow U$ implies that
    \begin{equation}
        \begin{aligned}
            \norm{\delta y}^2_{Y} &\leq  \scalprod{\delta y,\delta y}_{Y} + \scalprod{D_NH_\gamma(p)(S^*\delta y),S^*\delta y}_{U} \\
                                  &= \scalprod{w_1,\delta y}_{Y} + \scalprod{D_NH_\gamma(p)w_2,S^*\delta y}_{U} \\
                                  &\leq \norm{w_1}_{Y}\norm{\delta y}_{Y} + \norm{D_NH_\gamma(p)w_2}_{U}\norm{S^*\delta y}_U\\
                                  &\leq \left(\norm{w_1}_{Y} + \frac{C}\gamma\norm{w_2}_{V}\right)\norm{\delta y}_{Y},
        \end{aligned}
    \end{equation}
    using the boundedness of $S^*$ from $Y$ to $V$ and \cref{lem:newton_bounded} with $w_2\in V\hookrightarrow U$.
    The second equation of~\eqref{eq:newton_solve} then yields
    \begin{equation}
        \begin{split} 
            \norm{\delta p}_V \leq C\norm{w_1}_{Y} + \left(1+\frac{C^2}\gamma\right)\norm{w_2}_{V}.
            \qedhere
        \end{split}
    \end{equation}
\end{proof}
As a consequence of the Newton differentiability of $H_\gamma$ and of \cref{thm:newton_bound}, we obtain the following result; see, e.g., \cite[Theorem 8.16]{Kunisch:2008a}, \cite[Chapter 3.2]{Ulbrich:2011}.
\begin{theorem}
    The semismooth Newton iteration~\eqref{eq:newton_step} converges locally superlinearly in $Y\times V$.
\end{theorem}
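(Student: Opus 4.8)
The plan is to invoke the standard local convergence theory for semismooth Newton methods in function space, which requires exactly two ingredients: Newton differentiability of the residual operator at the solution (with the Newton derivative equal to what is used in the iteration \eqref{eq:newton_step}), and uniform boundedness of the inverses of the Newton derivatives in a neighborhood of the solution. Both have essentially been established already in the preceding material, so the proof amounts to assembling them. Concretely, write the optimality system \eqref{eq:optimality_red} as $\mathcal{N}(y,p) = 0$ for the operator $\mathcal{N}: Y\times V \to Y\times V$ given by $\mathcal{N}(y,p) = (y - SH_\gamma(p),\, p + S^*(y-z))$, and note that the linear operator appearing on the left-hand side of \eqref{eq:newton_step} is precisely a Newton derivative $D_N\mathcal{N}(y^k,p^k)$ of $\mathcal{N}$.

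First I would verify that $\mathcal{N}$ is Newton-differentiable from $Y\times V$ to $Y\times V$: the map $(y,p)\mapsto p + S^*(y-z)$ is affine and bounded, hence (trivially) Newton differentiable with derivative $(\delta y,\delta p)\mapsto \delta p + S^*\delta y$; the map $(y,p)\mapsto y - SH_\gamma(p)$ is Newton differentiable because $H_\gamma$ is Newton differentiable as a superposition operator from $V\hookrightarrow L^r(D;\R^2)$ to $U = L^2(D;\R^2)$ (established in the paragraph following \eqref{eq:newton_superposition}), $S:U\to Y$ is bounded and linear, and sums and compositions of a bounded linear operator with a Newton-differentiable operator are again Newton differentiable with the natural chain/sum rule for the Newton derivative. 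This identifies $D_N\mathcal{N}(y,p)$ with the operator $(\delta y,\delta p)\mapsto(\delta y - SD_NH_\gamma(p)\delta p,\ \delta p + S^*\delta y)$ used in \eqref{eq:newton_step}.

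Next I would record that \cref{thm:newton_bound} gives exactly the required uniform solvability: for every $p\in V$ the linearized system is solvable and the solution operator is bounded by a constant $C$ that depends only on $\gamma$ and the norm of $S$ (in particular \emph{not} on $p$), so $\sup\{\norm{D_N\mathcal{N}(y,p)^{-1}} : (y,p)\in Y\times V\} \le C < \infty$; this is even a global bound, so a fortiori it holds uniformly on any neighborhood of the solution. With these two facts in hand, the abstract semismooth Newton convergence theorem (e.g. \cite[Theorem 8.16]{Kunisch:2008a} or \cite[Chapter 3.2]{Ulbrich:2011}) applies verbatim: there is a neighborhood of the solution $(\bar y_\gamma, \bar p_\gamma)$ of \eqref{eq:optimality_red} such that for any starting point in that neighborhood the iteration \eqref{eq:newton_step} is well defined, stays in the neighborhood, and converges $q$-superlinearly to $(\bar y_\gamma,\bar p_\gamma)$ in $Y\times V$.

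The only genuinely delicate point — and it has been handled in the setup rather than left for this proof — is the function-space norm gap inherent to semismooth Newton methods: the superposition operator $H_\gamma$ is Newton differentiable only as a map out of $L^r$ with $r>2$, not out of $L^2$, which is why the iteration must be posed on $Y\times V$ with $V\hookrightarrow L^r(D;\R^2)$ and why one needs the assumption that $S^*$ maps continuously into such a $V$ (so that $\delta p = S^*\delta y + w_2 \in V$ stays in the right space, cf. the elimination in the proof of \cref{thm:newton_bound}). Provided one keeps this bookkeeping consistent — all residuals measured in $Y\times V$, all Newton derivatives acting $V\to U$ composed with the bounded $S$ — there is no further obstacle, and the result follows by direct citation of the abstract theory.
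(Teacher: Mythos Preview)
Your proposal is correct and takes essentially the same approach as the paper: the paper's proof is the single sentence preceding the theorem, noting that the result follows from the Newton differentiability of $H_\gamma$ together with \cref{thm:newton_bound} by invoking the abstract convergence theory \cite[Theorem 8.16]{Kunisch:2008a}, \cite[Chapter 3.2]{Ulbrich:2011}. Your write-up simply spells out this argument in detail, including the norm-gap bookkeeping, and is fully consistent with the paper's reasoning.
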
  
Since the right-hand side of the Newton system \eqref{eq:newton_step} is linear apart from the term $H_\gamma(p^k)$, we can use the following termination criterion for the Newton iteration: If all active sets $A_i(p) = \set{x\in\Omega}{p(x)\in Q^\gamma_i}$ coincide for $p^k$ and $p^{k+1}$, and the control is computed as $u^{k+1}=H_\gamma(p^{k+1})$, then $(u^{k+1}, p^{k+1})$ satisfies \eqref{eq:opt_switching_reg}; see, e.g., \cite[Remark 7.1.1]{Kunisch:2008a}.

This can be used as part of a continuation strategy to deal with the local convergence behavior of Newton methods: Starting with  $\gamma^0$ large and $(y^0,p^0)=(0,0)$, we solve the regularized optimality system \eqref{eq:opt_switching_reg} using the semismooth Newton iteration \eqref{eq:newton_step}. If the iteration converges for some $\gamma^m$ (in the sense that all active sets coincide), we reduce $\gamma^{m+1} = \frac1{10}\gamma^{m}$  and solve the system \eqref{eq:opt_switching_reg} again with the solution for $\gamma^{m}$ as the starting point. This procedure is terminated if the Newton iteration converges in a single step (assuming that the corresponding iterate then satisfies the system for smaller values of $\gamma$ as well) or if the Newton iteration fails to converge within a given number of steps (assuming that the system has then become too ill-conditioned for a stable numerical solution). In any case, the continuation is stopped when $\gamma^m \leq 10^{-16}$ is reached. 

While this strategy has proved robust for problems with scalar $L^1$- and $L^0$-type penalties, see e.g. \cite{IK:2012,CK:2013}, the situation is more delicate for the vector functional considered here; this is in particular the case when the singular arc $\calS$ is non-negligible and $D_N H_\gamma$ is not a diagonal matrix, where the continuation strategy failed in some cases to provide a good initial guess for the next Newton iteration.
We thus combine the semismooth Newton method with a backtracking line search along the Newton direction. In principle, this requires computation of $(\calG^*_\gamma)^*$ (or $\calF^*$ and $\calG^*_\gamma$); however, if the tracking term $\calF$ is strictly convex (as will be the case in the examples considered below), the system \eqref{eq:opt_switching_reg} is a sufficient as well as necessary condition and hence we can equivalently backtrack according to the residual norm of \eqref{eq:opt_switching_reg}. This was sufficient to achieve a robust and superlinear convergence in all examples.

\section{Numerical examples}\label{sec:examples}

We illustrate the behavior of the proposed approach and the structure of the resulting controls with two numerical examples. First, we consider an elliptic problem where the two control components each act along a strip in one coordinate direction. Specifically, we set $\Omega = [0,1]^2$, $D=[0,1]$, 
\begin{equation}
    \omega_1 = \set{(x_1,x_2)\in\Omega}{x_2 < \tfrac14},\qquad 
    \omega_2 = \set{(x_1,x_2)\in\Omega}{x_2 > \tfrac34},
\end{equation}
and consider the control-to-state mapping $S: u\mapsto y\in Y = L^2(\Omega)$ satisfying
\begin{equation}
    -\Delta y = Bu = \chi_{\omega_1}(x_1,x_2)u_1(x_1) + \chi_{\omega_2}(x_1,x_2)u_2(x_1).
\end{equation}
The target is
\begin{equation}
    z(x) = x_1\sin(2\pi x_1)\sin(2\pi x_2),
\end{equation}
see \cref{fig:ell_target}.
\begin{figure}[t]
    \centering
    \begin{tikzpicture}
        \node[anchor=south west,inner sep=0] at (0,0) {\includegraphics[width=0.5\textwidth]{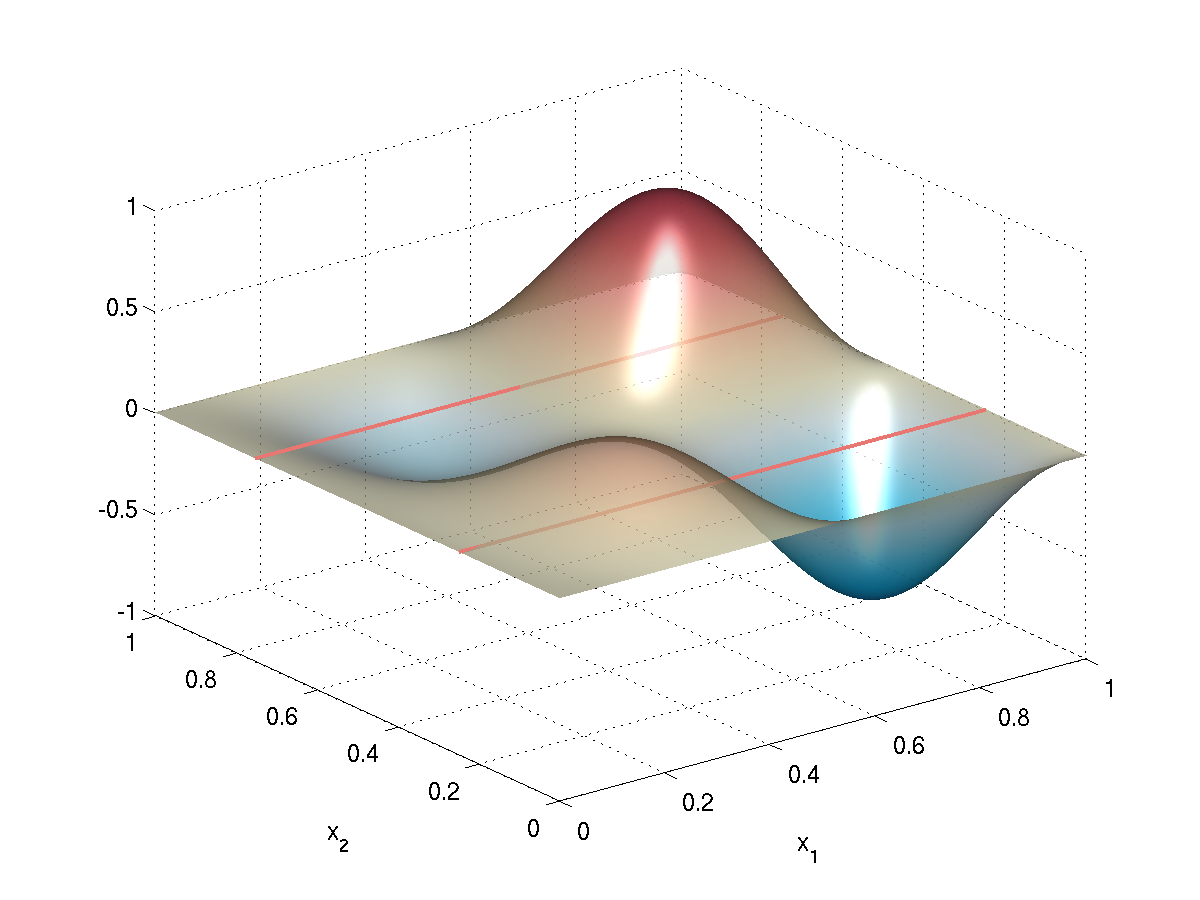}};
        \node at (3.5,2.1) {\footnotesize $\omega_1$};
        \node at (1.6,2.95) {\footnotesize $\omega_2$};
    \end{tikzpicture}
    \caption{Elliptic problem, target $z$ and control domains $\omega_1$, $\omega_2$}
    \label{fig:ell_target}
\end{figure}

The state $y$ and adjoint $p$ are discretized using piecewise linear finite elements based on a uniform triangulation $\mathcal{T}_h$ of the domain $\Omega$ with $N_h = 128\times128$ nodes. 
Since the control is eliminated, this can be interpreted as a variational discretization. 
Integration over the piecewise defined functions $H_\gamma(p_h)$ and $D_N H_\gamma(p_h)\delta p_h$ in the weak formulation of \eqref{eq:newton_step} is approximated by applying the mass matrix to the vector of nodal values; see \cite{CK:2013}. 
The control operator $B$ is approximated by forming the tensor product of the discrete indicator function of $\omega_i$ with the nodal values of $u_i$; the adjoint operator $B^*$ is approximated by the transpose of this matrix in order to preserve symmetry. 
The ``globalized'' semismooth Newton method with continuation and line searches described above is applied to the discretized system. 
The continuation is started at $\gamma^0=1$ and the backtracking is performed in steps of $\tau_i = 2^{-i}$ for $i=0,\dots,40$; if $\tau_i < 10^{-12}$,  the Newton iteration is restarted with reduced $\gamma$. 
Since we no longer perform full Newton steps, we augment the termination criterion for the Newton iteration with an additional check for the residual norm in the optimality system, i.e., we terminate if all active sets coincide and the residual is smaller than $10^{-6}$. 
A Matlab implementation of the described algorithm can be downloaded from \url{https://github.com/clason/switchingcontrol}.

\pgfplotsset{cycle list/Dark2-3}
\begin{figure}[t]
    \centering
    \begin{subfigure}[t]{0.475\textwidth}
        \begin{tikzpicture}

\begin{axis}[%
width=\textwidth,
xmin=0,
xmax=1,
ymin=-0.015,
ymax=0.015,
xlabel={$x_1$},
axis y line=left,
axis x line=middle,
legend style={legend cell align=left,align=left,draw=none,font=\footnotesize},
legend entries={$u_1$,$u_2$},
legend pos=north west
]
\addplot +[
line width=1,
]
table[row sep=crcr]{
0 2.10134069883681e-05\\
0.0078740157480315 0.000124071627733977\\
0.015748031496063 0.00024737157088374\\
0.0236220472440945 0.000371136545282718\\
0.031496062992126 0.000495520430096732\\
0.0393700787401575 0.000620620310039802\\
0.047244094488189 0.000746477730280378\\
0.0551181102362205 0.000873080098802991\\
0.062992125984252 0.00100036223487361\\
0.0708661417322835 0.00112820806158644\\
0.078740157480315 0.00125645243981356\\
0.0866141732283465 0.00138488314023827\\
0.094488188976378 0.00151324294952952\\
0.102362204724409 0.00164123190611081\\
0.110236220472441 0.00176850966039294\\
0.118110236220472 0.00189469795377856\\
0.125984251968504 0.00201938321020873\\
0.133858267716535 0.00214211923350858\\
0.141732283464567 0.00226243000330307\\
0.149606299212598 0.00237981256181519\\
0.15748031496063 0.00249373998342927\\
0.165354330708661 0.00260366441850285\\
0.173228346456693 0.00270902020254208\\
0.181102362204724 0.00280922702152005\\
0.188976377952756 0.00290369312381427\\
0.196850393700787 0.00299181856897102\\
0.204724409448819 0.0030729985032696\\
0.21259842519685 0.00314662645186138\\
0.220472440944882 0.00321209761709453\\
0.228346456692913 0.00326881217250934\\
0.236220472440945 0.0033161785418982\\
0.244094488188976 0.00335361665277085\\
0.251968503937008 0.00338056115354913\\
0.259842519685039 0.00339646458383508\\
0.267716535433071 0.00340080048715357\\
0.275590551181102 0.00339306645566331\\
0.283464566929134 0.0033727870964598\\
0.291338582677165 0.00333951690925749\\
0.299212598425197 0.00329284306543794\\
0.307086614173228 0.00323238807868117\\
0.31496062992126 0.00315781235770592\\
0.322834645669291 0.00306881663240712\\
0.330708661417323 0\\
0.338582677165354 0\\
0.346456692913386 0\\
0.354330708661417 0\\
0.362204724409449 0\\
0.37007874015748 0\\
0.377952755905512 0\\
0.385826771653543 0\\
0.393700787401575 0\\
0.401574803149606 0\\
0.409448818897638 0\\
0.417322834645669 0\\
0.425196850393701 0\\
0.433070866141732 0\\
0.440944881889764 -5.426871817271e-05\\
0.448818897637795 -0.000374772448488796\\
0.456692913385827 -0.000707034647027749\\
0.464566929133858 -0.00105040265219784\\
0.47244094488189 -0.00140417056268026\\
0.480314960629921 -0.00176758057196867\\
0.488188976377953 -0.00213982450765599\\
0.496062992125984 -0.00252004557234538\\
0.503937007874016 -0.00290734028240272\\
0.511811023622047 -0.00330076060012402\\
0.519685039370079 -0.00369931625425599\\
0.52755905511811 -0.00410197724318477\\
0.535433070866142 -0.00450767651449781\\
0.543307086614173 -0.0049153128140301\\
0.551181102362205 -0.00532375369692952\\
0.559055118110236 -0.00573183869271936\\
0.566929133858268 -0.00613838261580066\\
0.574803149606299 -0.00654217901232515\\
0.582677165354331 -0.0069420037338822\\
0.590551181102362 -0.00733661862798274\\
0.598425196850394 -0.00772477533489082\\
0.606299212598425 -0.00810521917995042\\
0.614173228346457 -0.00847669315018376\\
0.622047244094488 -0.00883794194359771\\
0.62992125984252 -0.00918771607932954\\
0.637795275590551 -0.00952477605649169\\
0.645669291338583 -0.00984789654934036\\
0.653543307086614 -0.0101558706261941\\
0.661417322834646 -0.0104475139793664\\
0.669291338582677 -0.010721669153255\\
0.677165354330709 -0.0109772097576429\\
0.68503937007874 -0.0112130446532229\\
0.692913385826772 -0.0114281220963491\\
0.700787401574803 -0.0116214338300491\\
0.708661417322835 -0.0117920191083997\\
0.716535433070866 -0.0119389686414727\\
0.724409448818898 -0.0120614284481872\\
0.732283464566929 -0.0121586036048983\\
0.740157480314961 0\\
0.748031496062992 0\\
0.755905511811024 0\\
0.763779527559055 0\\
0.771653543307087 0\\
0.779527559055118 0\\
0.78740157480315 0\\
0.795275590551181 0\\
0.803149606299213 0\\
0.811023622047244 0\\
0.818897637795276 0\\
0.826771653543307 0\\
0.834645669291339 0\\
0.84251968503937 0\\
0.850393700787402 0\\
0.858267716535433 0\\
0.866141732283465 0\\
0.874015748031496 0\\
0.881889763779528 0\\
0.889763779527559 0\\
0.897637795275591 0\\
0.905511811023622 0\\
0.913385826771654 0\\
0.921259842519685 0\\
0.929133858267717 0\\
0.937007874015748 0\\
0.94488188976378 0\\
0.952755905511811 0\\
0.960629921259842 0\\
0.968503937007874 0\\
0.976377952755906 0\\
0.984251968503937 0\\
0.992125984251969 0\\
1 0\\
};
\addplot +[
line width=1,
]
table[row sep=crcr]{
0 0\\
0.0078740157480315 0\\
0.015748031496063 0\\
0.0236220472440945 0\\
0.031496062992126 0\\
0.0393700787401575 0\\
0.047244094488189 0\\
0.0551181102362205 0\\
0.062992125984252 0\\
0.0708661417322835 0\\
0.078740157480315 0\\
0.0866141732283465 0\\
0.094488188976378 0\\
0.102362204724409 0\\
0.110236220472441 0\\
0.118110236220472 0\\
0.125984251968504 0\\
0.133858267716535 0\\
0.141732283464567 0\\
0.149606299212598 0\\
0.15748031496063 0\\
0.165354330708661 0\\
0.173228346456693 0\\
0.181102362204724 0\\
0.188976377952756 0\\
0.196850393700787 0\\
0.204724409448819 0\\
0.21259842519685 0\\
0.220472440944882 0\\
0.228346456692913 0\\
0.236220472440945 0\\
0.244094488188976 0\\
0.251968503937008 0\\
0.259842519685039 0\\
0.267716535433071 0\\
0.275590551181102 0\\
0.283464566929134 0\\
0.291338582677165 0\\
0.299212598425197 0\\
0.307086614173228 0\\
0.31496062992126 0\\
0.322834645669291 0\\
0.330708661417323 -0.00296541383043593\\
0.338582677165354 -0.00284712939978466\\
0.346456692913386 -0.00271379614061558\\
0.354330708661417 -0.0025652965601745\\
0.362204724409449 -0.00240156337914539\\
0.37007874015748 -0.00222258102841023\\
0.377952755905512 -0.00202838696558152\\
0.385826771653543 -0.00181907280557865\\
0.393700787401575 -0.00159478526011221\\
0.401574803149606 -0.0013557268815039\\
0.409448818897638 -0.001102156606854\\
0.417322834645669 -0.000834390099171244\\
0.425196850393701 -0.000552799882697651\\
0.433070866141732 -0.000257815270292344\\
0.440944881889764 0\\
0.448818897637795 0\\
0.456692913385827 0\\
0.464566929133858 0\\
0.47244094488189 0\\
0.480314960629921 0\\
0.488188976377953 0\\
0.496062992125984 0\\
0.503937007874016 0\\
0.511811023622047 0\\
0.519685039370079 0\\
0.52755905511811 0\\
0.535433070866142 0\\
0.543307086614173 0\\
0.551181102362205 0\\
0.559055118110236 0\\
0.566929133858268 0\\
0.574803149606299 0\\
0.582677165354331 0\\
0.590551181102362 0\\
0.598425196850394 0\\
0.606299212598425 0\\
0.614173228346457 0\\
0.622047244094488 0\\
0.62992125984252 0\\
0.637795275590551 0\\
0.645669291338583 0\\
0.653543307086614 0\\
0.661417322834646 0\\
0.669291338582677 0\\
0.677165354330709 0\\
0.68503937007874 0\\
0.692913385826772 0\\
0.700787401574803 0\\
0.708661417322835 0\\
0.716535433070866 0\\
0.724409448818898 0\\
0.732283464566929 0\\
0.740157480314961 0.0122297631431904\\
0.748031496062992 0.0122747394474308\\
0.755905511811024 0.012292448872702\\
0.763779527559055 0.0122823665692765\\
0.771653543307087 0.0122440434396422\\
0.779527559055118 0.0121771090259323\\
0.78740157480315 0.0120812742160163\\
0.795275590551181 0.0119563337580543\\
0.803149606299213 0.0118021685746761\\
0.811023622047244 0.0116187478685441\\
0.818897637795276 0.0114061310116351\\
0.826771653543307 0.011164469211193\\
0.834645669291339 0.0108940069459507\\
0.84251968503937 0.0105950831668946\\
0.850393700787402 0.0102681322575457\\
0.858267716535433 0.00991368474945476\\
0.866141732283465 0.00953236778935568\\
0.874015748031496 0.00912490535517884\\
0.881889763779528 0.00869211821890966\\
0.889763779527559 0.00823492365506382\\
0.897637795275591 0.0077543348943527\\
0.905511811023622 0.00725146032291539\\
0.913385826771654 0.00672750242829963\\
0.921259842519685 0.00618375649417069\\
0.929133858267717 0.00562160904650853\\
0.937007874015748 0.00504253605480249\\
0.94488188976378 0.00444810089244356\\
0.952755905511811 0.00383995206110122\\
0.960629921259842 0.00321982068426258\\
0.968503937007874 0.00258951777511112\\
0.976377952755906 0.00195093128307025\\
0.984251968503937 0.00130602292044003\\
0.992125984251969 0.00065682476216964\\
1 0.000111346258599944\\
};
\end{axis}
\end{tikzpicture}%
        \caption{$\alpha = 10^{-3}$, $\beta = 10^{-3}$\label{fig:ell_33}}
    \end{subfigure}
    \hfill
    \begin{subfigure}[t]{0.475\textwidth}
        \begin{tikzpicture}

\begin{axis}[%
width=\textwidth,
xmin=0,
xmax=1,
ymin=-0.015,
ymax=0.015,
xlabel={$x_1$},
axis y line=left,
axis x line=middle,
legend style={legend cell align=left,align=left,draw=none,font=\footnotesize},
legend entries={$u_1$,$u_2$},
legend pos=north west
]
\addplot +[
line width=1,
]
table[row sep=crcr]{
0 2.10131693495762e-05\\
0.0078740157480315 0.000124070251340663\\
0.015748031496063 0.000247368848122468\\
0.0236220472440945 0.000371132489094609\\
0.031496062992126 0.000495515060651343\\
0.0393700787401575 0.000620613655299498\\
0.047244094488189 0.000746469826551056\\
0.0551181102362205 0.000873070991269187\\
0.062992125984252 0.00100035197811599\\
0.0708661417322835 0.00112819672007955\\
0.078740157480315 0.00125644008840162\\
0.0866141732283465 0.00138486986458671\\
0.094488188976378 0.00151322884654989\\
0.102362204724409 0.00164121708435647\\
0.110236220472441 0.00176849424042292\\
0.118110236220472 0.00189468206848689\\
0.125984251968504 0.00201936700511618\\
0.133858267716535 0.0021421028670139\\
0.141732283464567 0.00226241364689041\\
0.149606299212598 0.00237979640021423\\
0.15748031496063 0.00249372421472444\\
0.165354330708661 0.00260364925418766\\
0.173228346456693 0.00270900586751438\\
0.181102362204724 0.00280921375401354\\
0.188976377952756 0.00290368117526141\\
0.196850393700787 0.0029918082037917\\
0.204724409448819 0.00307298999857973\\
0.21259842519685 0.00314662009709448\\
0.220472440944882 0.00321209371352909\\
0.228346456692913 0.00326881103269322\\
0.236220472440945 0.00331618048896044\\
0.244094488188976 0.00335362201960983\\
0.251968503937008 0.0033805702818836\\
0.259842519685039 0.00339647782310242\\
0.267716535433071 0.00340081819323577\\
0.275590551181102 0.00339308898941603\\
0.283464566929134 0.0033728148220117\\
0.291338582677165 0.00333955019205345\\
0.299212598425197 0.0032928822702156\\
0.307086614173228 0\\
0.31496062992126 0\\
0.322834645669291 0\\
0.330708661417323 0\\
0.338582677165354 0\\
0.346456692913386 0\\
0.354330708661417 0\\
0.362204724409449 0\\
0.37007874015748 0\\
0.377952755905512 0\\
0.385826771653543 0\\
0.393700787401575 0\\
0.401574803149606 0\\
0.409448818897638 0\\
0.417322834645669 0\\
0.425196850393701 0\\
0.433070866141732 0\\
0.440944881889764 -5.40428452085158e-05\\
0.448818897637795 -0.000374530575163943\\
0.456692913385827 -0.000706776108618577\\
0.464566929133858 -0.00105012678207148\\
0.47244094488189 -0.00140387669402191\\
0.480314960629921 -0.00176726803955366\\
0.488188976377953 -0.00213949264967735\\
0.496062992125984 -0.00251969373230773\\
0.503937007874016 -0.00290696781109456\\
0.511811023622047 -0.00330036685768099\\
0.519685039370079 -0.00369890061232888\\
0.52755905511811 -0.00410153908722744\\
0.535433070866142 -0.00450721524619158\\
0.543307086614173 -0.00491482785386302\\
0.551181102362205 -0.00532324448695087\\
0.559055118110236 -0.0057313046994921\\
0.566929133858268 -0.00613782333357721\\
0.574803149606299 -0.0065415939664748\\
0.582677165354331 -0.00694139248460188\\
0.590551181102362 -0.0073359807743268\\
0.598425196850394 -0.00772411051915975\\
0.606299212598425 -0.00810452709248339\\
0.614173228346457 -0.00847597353460555\\
0.622047244094488 -0.00883719460257697\\
0.62992125984252 -0.00918694088091241\\
0.637795275590551 -0.00952397294108338\\
0.645669291338583 -0.00984706553741669\\
0.653543307086614 -0.0101550118268359\\
0.661417322834646 -0.0104466275997228\\
0.669291338582677 -0.0107207555090563\\
0.677165354330709 -0.0109762692849014\\
0.68503937007874 -0.0112120779212832\\
0.692913385826772 -0.0114271298224728\\
0.700787401574803 -0.011620416895756\\
0.708661417322835 -0.0117909785778285\\
0.716535433070866 -0.0119379057820861\\
0.724409448818898 -0.0120603447542334\\
0.732283464566929 -0.0121575008238367\\
0.740157480314961 -0.012228642039688\\
0.748031496062992 -0.0122731026771244\\
0.755905511811024 -0.0122902866057681\\
0.763779527559055 -0.0122796705065091\\
0.771653543307087 -0.0122408069269492\\
0.779527559055118 -0.012173327164957\\
0.78740157480315 -0.0120769439704534\\
0.795275590551181 -0.0119514540560503\\
0.803149606299213 -0.0117967404077039\\
0.811023622047244 -0.0116127743871118\\
0.818897637795276 -0.011399617618188\\
0.826771653543307 -0.0111574236505793\\
0.834645669291339 -0.0108864393938481\\
0.84251968503937 -0.0105870063166323\\
0.850393700787402 -0.0102595614058094\\
0.858267716535433 -0.00990463788142215\\
0.866141732283465 -0.00952286566389028\\
0.874015748031496 -0.00911497159080518\\
0.881889763779528 -0.00868177938140844\\
0.889763779527559 -0.00822420934767071\\
0.897637795275591 -0.00774327785172208\\
0.905511811023622 -0.00724009651023709\\
0.913385826771654 -0.00671587114724571\\
0.921259842519685 -0.0061719004977295\\
0.929133858267717 -0.00560957466527197\\
0.937007874015748 -0.00503037333811746\\
0.94488188976378 0\\
0.952755905511811 0\\
0.960629921259842 0\\
0.968503937007874 0\\
0.976377952755906 0\\
0.984251968503937 0\\
0.992125984251969 0\\
1 0\\
};
\addplot +[
line width=1,
]
table[row sep=crcr]{
0 0\\
0.0078740157480315 0\\
0.015748031496063 0\\
0.0236220472440945 0\\
0.031496062992126 0\\
0.0393700787401575 0\\
0.047244094488189 0\\
0.0551181102362205 0\\
0.062992125984252 0\\
0.0708661417322835 0\\
0.078740157480315 0\\
0.0866141732283465 0\\
0.094488188976378 0\\
0.102362204724409 0\\
0.110236220472441 0\\
0.118110236220472 0\\
0.125984251968504 0\\
0.133858267716535 0\\
0.141732283464567 0\\
0.149606299212598 0\\
0.15748031496063 0\\
0.165354330708661 0\\
0.173228346456693 0\\
0.181102362204724 0\\
0.188976377952756 0\\
0.196850393700787 0\\
0.204724409448819 0\\
0.21259842519685 0\\
0.220472440944882 0\\
0.228346456692913 0\\
0.236220472440945 0\\
0.244094488188976 0\\
0.251968503937008 0\\
0.259842519685039 0\\
0.267716535433071 0\\
0.275590551181102 0\\
0.283464566929134 0\\
0.291338582677165 0\\
0.299212598425197 0\\
0.307086614173228 -0.00323248740472566\\
0.31496062992126 -0.0031581919468381\\
0.322834645669291 -0.00306948456035798\\
0.330708661417323 -0.00296610778064272\\
0.338582677165354 -0.00284784885105972\\
0.346456692913386 -0.0027145417202276\\
0.354330708661417 -0.00256606887965321\\
0.362204724409449 -0.00240236303409752\\
0.37007874015748 -0.00222340859791483\\
0.377952755905512 -0.00202924301114817\\
0.385826771653543 -0.00181995786968564\\
0.393700787401575 -0.00159569986432836\\
0.401574803149606 -0.00135667152418784\\
0.409448818897638 -0.00110313176041687\\
0.417322834645669 -0.000835396206881153\\
0.425196850393701 -0.000553837354997815\\
0.433070866141732 -0.000258884480597446\\
0.440944881889764 0\\
0.448818897637795 0\\
0.456692913385827 0\\
0.464566929133858 0\\
0.47244094488189 0\\
0.480314960629921 0\\
0.488188976377953 0\\
0.496062992125984 0\\
0.503937007874016 0\\
0.511811023622047 0\\
0.519685039370079 0\\
0.52755905511811 0\\
0.535433070866142 0.00450003588926954\\
0.543307086614173 0.00490759826436451\\
0.551181102362205 0.00531598816634518\\
0.559055118110236 0.00572404582641036\\
0.566929133858268 0.00613058668609008\\
0.574803149606299 0.0065344048418014\\
0.582677165354331 0.00693427661643088\\
0.590551181102362 0.00732896424784271\\
0.598425196850394 0.00771721968389853\\
0.606299212598425 0.00809778847316316\\
0.614173228346457 0.00846941374009439\\
0.622047244094488 0.00883084023317239\\
0.62992125984252 0.00918081843411695\\
0.637795275590551 0.00951810871606824\\
0.645669291338583 0.00984148553837087\\
0.653543307086614 0.0101497416654024\\
0.661417322834646 0.0104416923967255\\
0.669291338582677 0.0107161797957218\\
0.677165354330709 0.01097207690378\\
0.68503937007874 0.0112082919270688\\
0.692913385826772 0.0114237723829207\\
0.700787401574803 0.0116175091928888\\
0.708661417322835 0.0117885407096165\\
0.716535433070866 0.0119359566647763\\
0.724409448818898 0.0120589020254939\\
0.732283464566929 0.0121565807468703\\
0.740157480314961 0.0122282594084527\\
0.748031496062992 0.0122732707227861\\
0.755905511811024 0.0122910169044894\\
0.763779527559055 0.0122809728886602\\
0.771653543307087 0.012242689387803\\
0.779527559055118 0.012175795776906\\
0.78740157480315 0.0120800027967577\\
0.795275590551181 0.0119551050660958\\
0.803149606299213 0.0118009833937138\\
0.811023622047244 0.0116176068822184\\
0.818897637795276 0.0114050348157282\\
0.826771653543307 0.0111634183244297\\
0.834645669291339 0.0108930018195618\\
0.84251968503937 0.0105941241930818\\
0.850393700787402 0.0102672197769672\\
0.858267716535433 0.00991281905783811\\
0.866141732283465 0.00953154914332739\\
0.874015748031496 0.00912413397739495\\
0.881889763779528 0.00869139430255727\\
0.889763779527559 0.00823424736779797\\
0.897637795275591 0.00775370638172432\\
0.905511811023622 0.0072508797113407\\
0.913385826771654 0.00672696982761484\\
0.921259842519685 0.00618327199981116\\
0.929133858267717 0.0056211727413471\\
0.937007874015748 0.00504214801067728\\
0.94488188976378 0.00444776117140215\\
0.952755905511811 0.0038396607163847\\
0.960629921259842 0.00321957776104991\\
0.968503937007874 0.00258932331104145\\
0.976377952755906 0.00195078530855856\\
0.984251968503937 0.001305925458799\\
0.992125984251969 0.000656775829549233\\
1 0.000111337971270653\\
};
\end{axis}
\end{tikzpicture}%
        \caption{$\alpha = 10^{-3}$, $\beta = 10^{-8}$\label{fig:ell_38}}
    \end{subfigure}

    \begin{subfigure}[t]{0.475\textwidth}
        \begin{tikzpicture}

\begin{axis}[%
width=\textwidth,
xmin=0,
xmax=1,
ymin=-1.5,
ymax=1.5,
xlabel={$x_1$},
axis y line=left,
axis x line=middle,
legend style={legend cell align=left,align=left,draw=none,font=\footnotesize},
legend entries={$u_1$,$u_2$},
legend pos=north west
]
\addplot +[
line width=1,
]
table[row sep=crcr]{
0 0.00209132495762677\\
0.0078740157480315 0.0123478044154874\\
0.015748031496063 0.0246192415698687\\
0.0236220472440945 0.0369381128711569\\
0.031496062992126 0.0493202633576348\\
0.0393700787401575 0.061775852359287\\
0.047244094488189 0.0743094766572268\\
0.0551181102362205 0.0869203082190822\\
0.062992125984252 0.0996022463328522\\
0.0708661417322835 0.112344083889483\\
0.078740157480315 0.125129687495588\\
0.0866141732283465 0.137938191025912\\
0.094488188976378 0.15074420217461\\
0.102362204724409 0.163518021789787\\
0.110236220472441 0.176225876821393\\
0.118110236220472 0.188830168228604\\
0.125984251968504 0.20128973181255\\
0.133858267716535 0.213560110955264\\
0.141732283464567 0.225593854624392\\
0.149606299212598 0.237340842822665\\
0.15748031496063 0.248748603035672\\
0.165354330708661 0.259762604050105\\
0.173228346456693 0.270326537196971\\
0.181102362204724 0.280382614692871\\
0.188976377952756 0.289871889477791\\
0.196850393700787 0.29873458255645\\
0.204724409448819 0.306910410548084\\
0.21259842519685 0.314338913401094\\
0.220472440944882 0.320959782691693\\
0.228346456692913 0.326713191281397\\
0.236220472440945 0.331540136389498\\
0.244094488188976 0.335382809683824\\
0.251968503937008 0.338184912576019\\
0.259842519685039 0.339891846827952\\
0.267716535433071 0\\
0.275590551181102 0\\
0.283464566929134 0\\
0.291338582677165 0\\
0.299212598425197 0\\
0.307086614173228 0\\
0.31496062992126 0\\
0.322834645669291 0\\
0.330708661417323 0\\
0.338582677165354 0\\
0.346456692913386 0\\
0.354330708661417 0\\
0.362204724409449 0\\
0.37007874015748 0\\
0.377952755905512 0\\
0.385826771653543 0\\
0.393700787401575 0\\
0.401574803149606 0\\
0.409448818897638 0\\
0.417322834645669 0\\
0.425196850393701 0\\
0.433070866141732 0\\
0.440944881889764 -0.000230328627617862\\
0.448818897637795 -0.0319932069462927\\
0.456692913385827 -0.0649298992095749\\
0.464566929133858 -0.0989757950329746\\
0.47244094488189 -0.13406098264454\\
0.480314960629921 -0.170110384010492\\
0.488188976377953 -0.207043910487496\\
0.496062992125984 -0.244776638294138\\
0.503937007874016 -0.283219003929011\\
0.511811023622047 -0.322277017490166\\
0.519685039370079 -0.36185248789337\\
0.52755905511811 -0.401843263324895\\
0.535433070866142 -0.442143492716443\\
0.543307086614173 0\\
0.551181102362205 0\\
0.559055118110236 0\\
0.566929133858268 0\\
0.574803149606299 0\\
0.582677165354331 0\\
0.590551181102362 -0.723366330492653\\
0.598425196850394 -0.76198157790256\\
0.606299212598425 -0.799837289071046\\
0.614173228346457 0\\
0.622047244094488 -0.872770315330376\\
0.62992125984252 0\\
0.637795275590551 0\\
0.645669291338583 0\\
0.653543307086614 0\\
0.661417322834646 -0.700919246861546\\
0.669291338582677 -1.06047445361336\\
0.677165354330709 -1.08597391761455\\
0.68503937007874 -1.10952034864269\\
0.692913385826772 -1.13100898211699\\
0.700787401574803 -1.15033936422378\\
0.708661417322835 -1.1674157528541\\
0.716535433070866 -1.18214751074448\\
0.724409448818898 -1.1944494895051\\
0.732283464566929 -0.528232369294894\\
0.740157480314961 0\\
0.748031496062992 0\\
0.755905511811024 0\\
0.763779527559055 -1.21696280308044\\
0.771653543307087 -1.21325090902948\\
0.779527559055118 -1.14573562125971\\
0.78740157480315 0\\
0.795275590551181 -1.18495400193886\\
0.803149606299213 0\\
0.811023622047244 0\\
0.818897637795276 0\\
0.826771653543307 -1.10666072367074\\
0.834645669291339 -1.07988357405824\\
0.84251968503937 -1.05027807202157\\
0.850393700787402 -1.01788693358323\\
0.858267716535433 0\\
0.866141732283465 0\\
0.874015748031496 0\\
0.881889763779528 0\\
0.889763779527559 0\\
0.897637795275591 0\\
0.905511811023622 0\\
0.913385826771654 0\\
0.921259842519685 0\\
0.929133858267717 0\\
0.937007874015748 0\\
0.94488188976378 0\\
0.952755905511811 0\\
0.960629921259842 0\\
0.968503937007874 0\\
0.976377952755906 0\\
0.984251968503937 0\\
0.992125984251969 0\\
1 0\\
};
\addplot +[
line width=1,
]
table[row sep=crcr]{
0 0\\
0.0078740157480315 0\\
0.015748031496063 0\\
0.0236220472440945 0\\
0.031496062992126 0\\
0.0393700787401575 0\\
0.047244094488189 0\\
0.0551181102362205 0\\
0.062992125984252 0\\
0.0708661417322835 0\\
0.078740157480315 0\\
0.0866141732283465 0\\
0.094488188976378 0\\
0.102362204724409 0\\
0.110236220472441 0\\
0.118110236220472 0\\
0.125984251968504 0\\
0.133858267716535 0\\
0.141732283464567 0\\
0.149606299212598 0\\
0.15748031496063 0\\
0.165354330708661 0\\
0.173228346456693 0\\
0.181102362204724 0\\
0.188976377952756 0\\
0.196850393700787 0\\
0.204724409448819 0\\
0.21259842519685 0\\
0.220472440944882 0\\
0.228346456692913 0\\
0.236220472440945 0\\
0.244094488188976 0\\
0.251968503937008 0\\
0.259842519685039 0\\
0.267716535433071 -0.340451597553905\\
0.275590551181102 -0.339818574000443\\
0.283464566929134 -0.337940181149598\\
0.291338582677165 -0.334771792413643\\
0.299212598425197 -0.330272003790229\\
0.307086614173228 -0.324402929850415\\
0.31496062992126 -0.317130436434062\\
0.322834645669291 -0.30842438738189\\
0.330708661417323 -0.298258885551307\\
0.338582677165354 -0.286612491086727\\
0.346456692913386 -0.273468420427287\\
0.354330708661417 -0.258814728550688\\
0.362204724409449 -0.242644474344938\\
0.37007874015748 -0.224955869082364\\
0.377952755905512 -0.205752411050686\\
0.385826771653543 -0.185043009724172\\
0.393700787401575 -0.1628420793183\\
0.401574803149606 -0.139169586929569\\
0.409448818897638 -0.11405111000584\\
0.417322834645669 -0.087517871944666\\
0.425196850393701 -0.0596067545302618\\
0.433070866141732 -0.0303602923276237\\
0.440944881889764 0\\
0.448818897637795 0\\
0.456692913385827 0\\
0.464566929133858 0\\
0.47244094488189 0\\
0.480314960629921 0\\
0.488188976377953 0\\
0.496062992125984 0\\
0.503937007874016 0\\
0.511811023622047 0\\
0.519685039370079 0\\
0.52755905511811 0\\
0.535433070866142 0\\
0.543307086614173 0.482645532972274\\
0.551181102362205 0.523234149689461\\
0.559055118110236 0.563794160344938\\
0.566929133858268 0.604207947410813\\
0.574803149606299 0.644355786113347\\
0.582677165354331 0.684116178469197\\
0.590551181102362 0\\
0.598425196850394 0\\
0.606299212598425 0\\
0.614173228346457 0.836808506390636\\
0.622047244094488 0\\
0.62992125984252 0.907598415607822\\
0.637795275590551 0.941169480953777\\
0.645669291338583 0.973361578896138\\
0.653543307086614 1.00405458894639\\
0.661417322834646 0.332222875863484\\
0.669291338582677 0\\
0.677165354330709 0\\
0.68503937007874 0\\
0.692913385826772 0\\
0.700787401574803 0\\
0.708661417322835 0\\
0.716535433070866 0\\
0.724409448818898 0\\
0.732283464566929 0.676013355605582\\
0.740157480314961 1.21145319126152\\
0.748031496062992 1.2160153665522\\
0.755905511811024 1.2178693524967\\
0.763779527559055 0\\
0.771653543307087 0\\
0.779527559055118 0.0609657960842675\\
0.78740157480315 1.19727123018487\\
0.795275590551181 0\\
0.803149606299213 1.1697330214053\\
0.811023622047244 1.15160510557378\\
0.818897637795276 1.13057603255764\\
0.826771653543307 0\\
0.834645669291339 0\\
0.84251968503937 0\\
0.850393700787402 0\\
0.858267716535433 0.982763371656226\\
0.866141732283465 0.944972408590676\\
0.874015748031496 0.904585963576701\\
0.881889763779528 0.861686345805346\\
0.889763779527559 0.816365249942505\\
0.897637795275591 0.768723718960987\\
0.905511811023622 0.718872088661913\\
0.913385826771654 0.666929904740665\\
0.921259842519685 0.613025812597973\\
0.929133858267717 0.5572974206858\\
0.937007874015748 0.499891138097463\\
0.94488188976378 0.440961987086862\\
0.952755905511811 0.380673391199727\\
0.960629921259842 0.31919693969762\\
0.968503937007874 0.256712128926592\\
0.976377952755906 0.193406081177756\\
0.984251968503937 0.129473241287118\\
0.992125984251969 0.0651150503532205\\
1 0.0110386858022917\\
};
\end{axis}
\end{tikzpicture}%
        \caption{$\alpha = 10^{-5}$, $\beta = 10^{-3}$\label{fig:ell_53}}
    \end{subfigure}
    \hfill
    \begin{subfigure}[t]{0.475\textwidth}
        \input{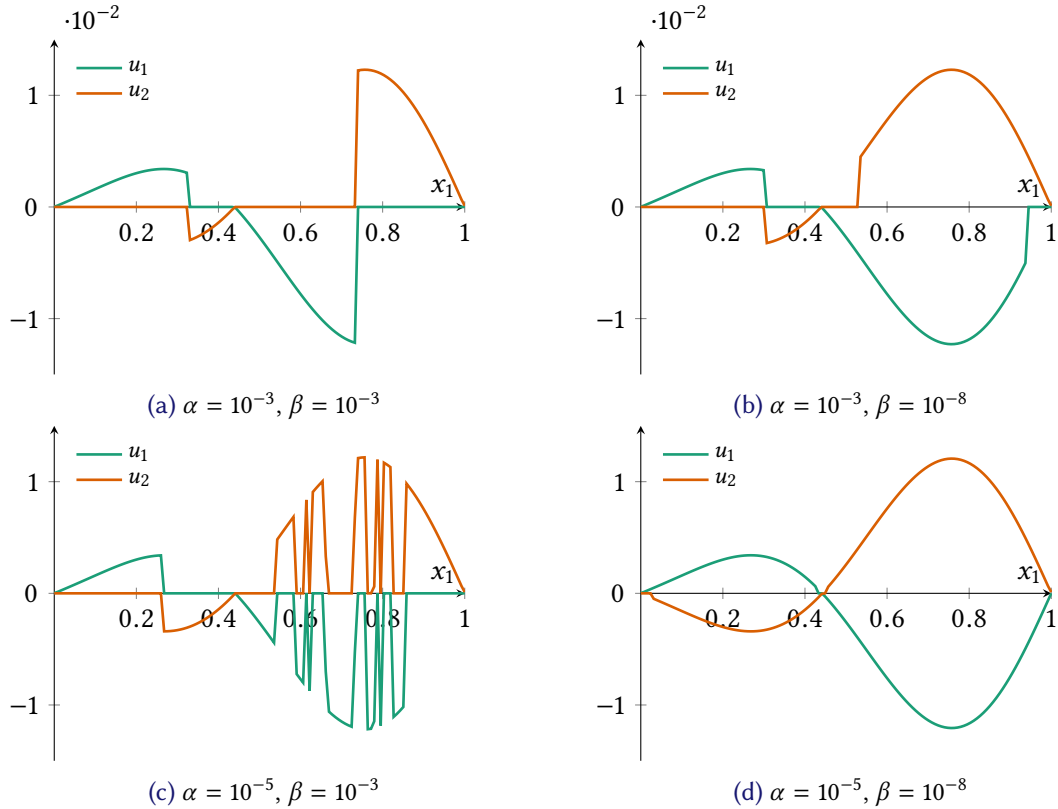}
        \caption{$\alpha = 10^{-5}$, $\beta = 10^{-8}$\label{fig:ell_58}}
    \end{subfigure}
    \caption{Elliptic problem, effect of $\alpha,\beta$ on structure of control $u_\gamma$ (left: switching, right: no switching)}
    \label{fig:elliptic}
\end{figure}
We begin by illustrating the effects of the values of $\alpha$ and $\beta$ on the structure of the resulting controls. 
\Cref{fig:elliptic} shows the final computed controls $u_\gamma$ for the same target $z$ and different combinations of control costs. 
For the choice  $\alpha=\beta=10^{-3}$ (\cref{fig:ell_33}), the control has a pure switching structure, with $80$ nodes (out of $128$) having values in the active set $Q^\gamma_1$ and $48$ nodes in the set $Q^\gamma_2$ (the remaining sets being empty); in particular, the singular arc $\calS$ is empty. 
Furthermore, the effect of the $L^2$ costs on the active control components can be observed clearly. 
Decreasing $\beta$ to $10^{-8}$ results in a control that is no longer purely switching (\cref{fig:ell_38}), although some switching behavior still obtains in parts of $D$; the resulting active sets have $51$ nodes in $Q^\gamma_1$, $25$ nodes in $Q^\gamma_2$, and $52$ nodes in the regularized free arc $Q^\gamma_0$. 
Since $\alpha$ is unchanged, the magnitude of the active controls is the same as before. 
Decreasing $\alpha$, on the other hand, allows for controls of larger magnitude, but results in the appearance of singular arcs. 
For $\alpha=10^{-5}$ and $\beta=10^{-3}$ (\cref{fig:ell_53}), we observe a control which is almost purely switching ($66$ and $59$ nodes in $Q^\gamma_1$ and $Q^\gamma_2$, respectively) but still has a non-negligible singular arc with $3$ nodes in $Q^\gamma_{12}$. 
The control shows a chittering behavior on part of the switching arc, which can be attributed to the weak but not pointwise convergence of the regularized controls. 
For the smaller value of $\beta$ (\cref{fig:ell_58}), the singular arc disappears at the expense of the appearance of a large free arc ($5$ nodes in $Q^\gamma_1$, $3$ nodes in $Q^\gamma_2$, and $120$ nodes in~$Q^\gamma_0$).

Let us briefly comment on the convergence behavior of the ``globalized'' Newton method.
For $\gamma>10^{-9}$, the semismooth Newton iteration shows the typical superlinear behavior, converging within two or three (full) steps to a solution of the system \eqref{eq:opt_switching_reg}.
For smaller values of $\gamma$, backtracking becomes necessary after one full step, but, depending on the presence of singular arcs, often enters into a superlinear phase again where full steps are taken to convergence.
Specifically, in the case of $\alpha=\beta=10^{-3}$, the iteration terminates successfully at $\gamma=10^{-12}$ with only a few reduced steps necessary.
For $\alpha = 10^{-5}$ and $\beta = 10^{-3}$, more line searches are performed, but the final superlinear phase is still observed for $\gamma>10^{-13}$, after which the Newton iteration terminated since no sufficient decrease in the residual was possible.
However, restarting with smaller $\gamma$ still allowed some successful steps before terminating again, which continued until the specified terminal value of $\gamma=10^{-16}$ was reached.
For $\beta = 10^{-8}$, no backtracking was necessary, and the algorithm showed the typical behavior of a semismooth Newton method with continuation (terminating successfully at $\gamma=10^{-9}$ for $\alpha = 10^{-3}$ and at $\gamma=10^{-10}$ for $\alpha=10^{-5}$).

\bigskip

To demonstrate the applicability of the proposed approach to switching control of parabolic equations, we also show results for the one-dimensional heat equation, where $S:u\mapsto y$ satisfying
\begin{equation}
    y_t - \Delta y = Bu = \chi_{\omega_1}(x)u_1(t) + \chi_{\omega_2}(x)u_2(t)
\end{equation}
with $\Omega = [-1,1]$, $D=[0,2]$, $\Omega_T = D\times \Omega$, 
\begin{equation}
    \omega_1 = \set{x\in \Omega}{x< -\tfrac12},\qquad
    \omega_2 = \set{x\in \Omega}{x> \tfrac12}.
\end{equation}
As a target, we choose the trajectory of the heat equation with the right-hand side
\begin{equation}
    f(t,x) = \begin{cases} 63 & \text{if } |t-1-x|<\tfrac1{10},\\
        0 & \text{otherwise},
    \end{cases}
\end{equation}
see \cref{fig:par_target}.
\begin{figure}[t]
    \centering
    \begin{tikzpicture}
        \node[anchor=south west,inner sep=0] at (0,0) {\includegraphics[width=0.5\textwidth]{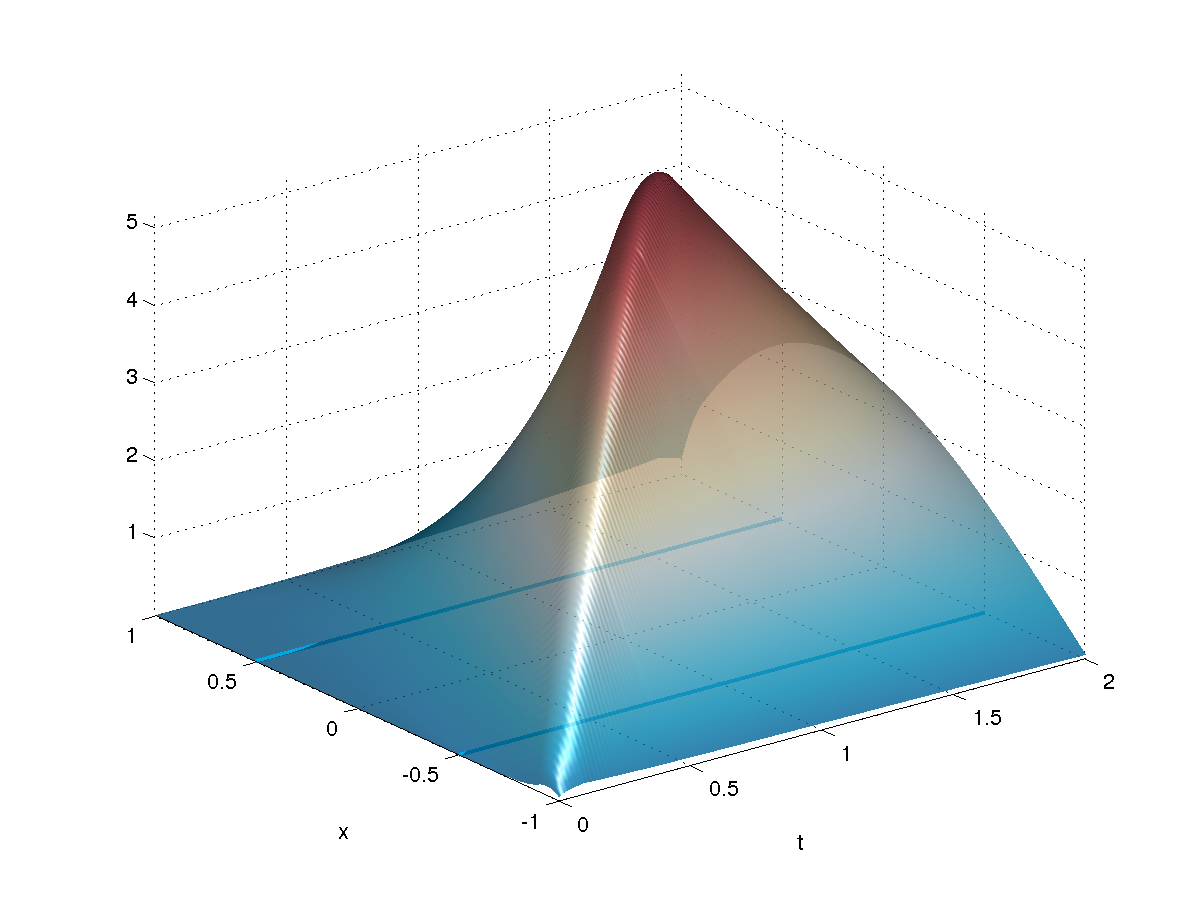}};
        \node[color=white] at (1.6,1.7) {\footnotesize $\omega_2$};
        \node[color=white] at (6,1.55) {\footnotesize $\omega_1$};
    \end{tikzpicture}
    \caption{Parabolic problem, target $z$ and control domains $\omega_1$, $\omega_2$}
    \label{fig:par_target}
\end{figure}
The discretization is similar as in the elliptic case, using a full space-time discontinuous Galerkin discretization corresponding to a backward Euler method with $N_h=128$ spatial grid points and $N_t = 512$ time steps.

The resulting controls for $\alpha=10^{-1}$ are shown in \cref{fig:parabolic}. 
For $\beta = 1$ (\cref{fig:par_10}), the control is again of purely switching type with $256$ nodes each in $Q^\gamma_1$ and $Q^\gamma_2$. 
No backtracking was necessary, and the continuation terminated successfully at $\gamma=10^{-9}$. 
The control for $\beta = 10^{-1}$ (\cref{fig:par_11}) shows a free arc, with $77$ nodes in $Q^\gamma_1$, $110$ nodes in $Q^\gamma_2$, and $325$ nodes in $Q^\gamma_0$. 
The convergence behavior is now different due to the intermittent appearance of singular arcs: Although the first continuation step with $\gamma=10^{-2}$ shows the usual superlinear convergence with full steps, the resulting iterate contains nodes in $Q^\gamma_{10}$ and $Q^\gamma_{20}$. 
Subsequently, the iterations for $\gamma>10^{-5}$ suffer from progressively smaller steps until no sufficient decrease is possible. 
At $\gamma=10^{-5}$, however, the corresponding singular arc $\partial\calI$ is empty and the iteration returns to superlinear convergence with full steps, terminating successfully at $\gamma=10^{-9}$. 
The difference to the elliptic case can be attributed to the lower regularity of the adjoint state $p$ with respect to the control dimension (here: time) and the corresponding smaller norm gap in the regularized subdifferential~$H_\gamma(p)$.
\begin{figure}
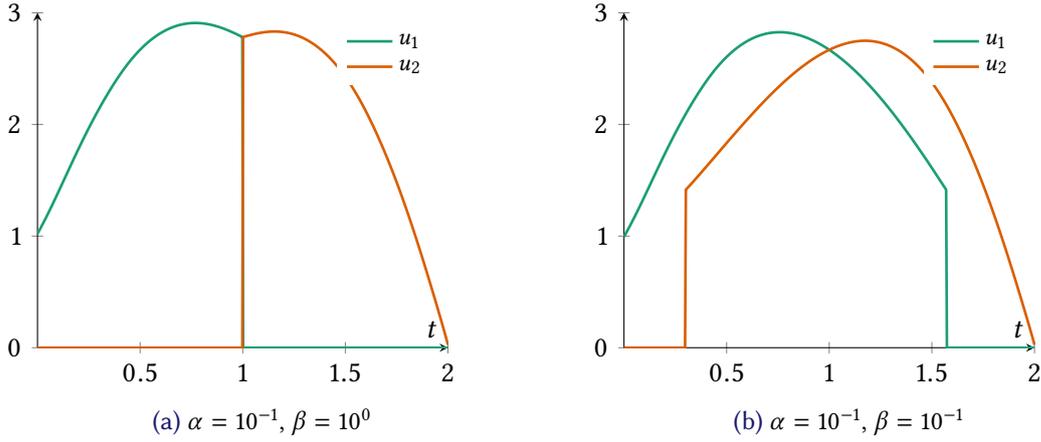

    \centering
    \begin{subfigure}[t]{0.475\textwidth}
        \input{par_10_control.tikz}
        \caption{$\alpha = 10^{-1}$, $\beta = 10^{0}$\label{fig:par_10}}
    \end{subfigure}
    \hfill
    \begin{subfigure}[t]{0.475\textwidth}
        \input{par_11_control.tikz}
        \caption{$\alpha = 10^{-1}$, $\beta = 10^{-1}$\label{fig:par_11}}
    \end{subfigure}
    \caption{Parabolic problem, effect of $\alpha,\beta$ on structure of control $u_\gamma$ (left: switching, right: no switching)}
    \label{fig:parabolic}
\end{figure}

\section{Conclusion}

A framework for optimal control problems was presented that promotes controls of switching type. While switching is promoted by a sparsity-enhancing part of the cost  functional, the active controls are weighted with quadratic cost. Analysis of the proposed approach is carried out by techniques from convex analysis, while its numerical solution is achieved using a semismooth Newton method with continuation and line searches. Numerical results support the theoretical~findings. 

There are many interesting follow-up topics, including the treatment of problems with nonlinear control-to-state mappings, a more detailed analysis of the influence of the control cost parameters on the structure of the controls, and problems with multiple controls exhibiting generalized switching structures.

\appendix

\section{Application to other binary penalties}\label{sec:binary}

This appendix demonstrates the application of the approach of \cref{sec:pointwise} to other functionals involving the binary functional $|v|_0$. While the Fenchel conjugates and subdifferentials have already been obtained in the previous works cited below, the proximal mappings and corresponding Moreau--Yosida regularizations and complementarity formulations are new.

\subsection{Sparse control}
\label{sec:sparse}

We first consider the functional
\begin{equation}
    \calG(u) = \frac\alpha2\norm{u}_{L^2}^2 + \beta \int_\Omega |u(x)|_0\,dx,
\end{equation}
which promotes sparsity in optimal control and, contrary to $L^1$-type penalties, allows separate penalization of magnitude and support; see \cite{IK:2012}.
Setting
\begin{equation}
    g(v) = \frac\alpha2 v^2 + \beta |v|_0 := 
    \begin{cases}  \frac\alpha2 v^2 + \beta & \text{if }v\neq 0,\\
        0 & \text{if }v = 0,
    \end{cases}
\end{equation}
we compute the Fenchel conjugate
\begin{equation}\label{eq:sparse:fenchel_pt}
    g^*(q) = \sup_{v\in\R}v\cdot q-g(v)\tag{\textsc{a}.1}
\end{equation}
by case distinction. Assume that the supremum is attained for some $\bar v\in\R$. Then we discriminate the following two cases:
\begin{enumerate}[(i)]
    \item $\bar v=0$, in which case $g(\bar v)=0$ and hence $g^*(q) = 0$;
    \item $\bar v\neq 0$, in which case $g(\bar v) = \frac\alpha2 \bar v^2+\beta$. Since $g$ is differentiable at $\bar v$, the necessary condition for $\bar v$ to attain the maximum is $q = \alpha \bar v$. Solving for $\bar v$ and inserting in~\eqref{eq:sparse:fenchel_pt} yields
        \begin{equation}
            g^*(q) = \frac1{2\alpha} q^2-\beta.
        \end{equation}
\end{enumerate}
It remains to decide which of these cases is attained for a given $q$, i.e., whether
\begin{equation}
    g_0^*(q) := 0 < \frac1{2\alpha}q^2 - \beta =: g_1^*(q).
\end{equation}
This directly yields
\begin{equation}
    g^*(q) = \max_{i\in\{0,1\}} g_i^*(q) = 
    \begin{cases}
        0 & \text{if }|q| \le  \sqrt{2\alpha\beta},\\
        \text{if } \frac1{2\alpha} q^2 - \beta & \text{if }|q| > \sqrt{2\alpha\beta}.
    \end{cases}
\end{equation}
as well as 
\begin{equation}\label{eq:l0_subdiff}
    \partial g^*(q) = \overline{\mathrm{co}} \left(\bigcup_{\{i:g^*(q)= g_i^*(q)\}}\left\{ (g_{i}^*)'(q)\right\}\right)
    = 
    \begin{cases}
        0 &\text{if } |q| < \sqrt{2\alpha\beta},\\
        \left[0,\tfrac1\alpha q\right] & \text{if }|q| = \sqrt{2\alpha\beta},\\
        \frac1\alpha q  &\text{if } |q| > \sqrt{2\alpha\beta}.
    \end{cases}\tag{\textsc{a}.2}
\end{equation}

We now turn to the computation for given $\gamma>0$ and $v\in\R$ of the proximal mapping  $w = \prox_{\gamma g^*}(v)$ of $g^*$ or, equivalently, the resolvent of $\partial g^*$, which is characterized by the relation $v\in (\mathrm{Id}+\gamma\partial g^*)(w)$.
We now distinguish all possible cases in~\eqref{eq:l0_subdiff}:
\begin{enumerate}[(i)]
    \item $|w|<\sqrt{2\alpha\beta}$: In this case $v=w$, which implies that $|v|<\sqrt{2\alpha\beta}$.
    \item $|w|> \sqrt{2\alpha\beta}$: In this case $v=(1+\frac\gamma\alpha)w$, which implies that $|v|>(1+\frac\gamma\alpha)\sqrt{2\alpha\beta}$.
    \item $|w| = \sqrt{2\alpha\beta}$: In this case $v\in[w,(1+\frac\gamma\alpha)w]$, which implies that $\sqrt{2\alpha\beta}\leq |v| \leq (1+\frac\gamma\alpha)\sqrt{2\alpha\beta}$.
\end{enumerate}
Inserting this into the definition of the Moreau--Yosida regularization and simplifying yields
\begin{equation}\label{eq:sparse:subdiff_gamma}
    (\partial g^*)_\gamma(q) = 
    \begin{cases}
        0 &\text{if } |q| < \sqrt{2\alpha\beta},\\
        \tfrac1\gamma\left(q-\sqrt{2\alpha\beta}\sign(q)\right) &\text{if }|q|\in\left[\sqrt{2\alpha\beta},  (1+\tfrac\gamma\alpha)\sqrt{2\alpha\beta}\right],\\
        \tfrac{1}{\alpha+\gamma}q & \text{if }|q| > (1+\tfrac\gamma\alpha)\sqrt{2\alpha\beta},
    \end{cases}
\end{equation}
which can be interpreted as a soft-thresholding operator.

Since $h_\gamma := (\partial g^*)_\gamma$ is Lipschitz continuous and piecewise differentiable, it is semismooth, and its Newton-derivative at $q$ in direction $\delta q$ is given by
\begin{equation}
    D_N h_\gamma(q)\partial q = 
    \begin{cases}
        0 &\text{if } |q| < \sqrt{2\alpha\beta},\\
        \tfrac1\gamma\delta q &\text{if }|q|\in\left[\sqrt{2\alpha\beta},  (1+\tfrac\gamma\alpha)\sqrt{2\alpha\beta}\right],\\
        \tfrac{1}{\alpha+\gamma}\delta q & \text{if }|q| > (1+\tfrac\gamma\alpha)\sqrt{2\alpha\beta}.
    \end{cases}
\end{equation}

\subsection{Multi-bang control}\label{sec:multibang}

We now consider the \emph{multi-bang} functional
\begin{equation}
    g(v) = \frac{\alpha}{2} v^2 + \beta\prod_{i=1}^d |v-u_i|_0 + \delta_{[u_1,u_d]}(v),
\end{equation}
where $u_1,\dots,u_d$ are given desired control states and $\delta_C$ denotes the indicator function of the convex set $C$. In optimal control problems, the binary term (together with the pointwise constraints) promotes controls which, for $\beta$ sufficiently large, take on only the desired values almost everywhere except possibly on a singular set; see \cite{CK:2013}.

Proceeding as in \cref{sec:sparse} yields the Fenchel conjugate
\begin{equation}\label{eq:multibang:conj}
    g^*(q) =
    \begin{cases}
        qu_1 - \frac{\alpha}{2} u_1^2  &\text{if } q-\alpha u_1 \leq \sqrt{2\alpha\beta} \quad\text{and}\quad q\leq\frac\alpha2(u_1+u_{2}),\\
        qu_i - \frac{\alpha}{2} u_i^2 &\text{if } |q-\alpha u_i| \leq \sqrt{2\alpha\beta} \quad\text{and}\quad \frac\alpha2(u_{i-1}+u_i)\leq q \leq \frac\alpha2(u_i+u_{i+1}), 1<i<d,\\
        qu_d - \frac{\alpha}{2} u_d^2 &\text{if } q-\alpha u_d \geq \sqrt{2\alpha\beta} \quad\text{and}\quad \frac\alpha2(u_{d}+u_{d-1}) \leq q,\\
        \frac{1}{2\alpha} q^2 - \beta &\text{if } |q-\alpha u_j| \leq \sqrt{2\alpha\beta} \ \text{ for all }\ j\in\{1,\dots,d\}\quad\text{and}\quad \alpha u_1 \leq q \leq \alpha u_d,
    \end{cases}
\end{equation}
whose subdifferential is
\begin{equation}
    \partial g^*(q)=
    \begin{cases}
        \{u_i\} & \text{if } q \in Q_i,\ 1\leq i < d,\\
        \{\frac1\alpha q\} &\text{if } q \in Q_0,\\
        \left[u_i,\frac1\alpha q\right] & \text{if } q \in Q_{i0},\ 1\leq i\leq d,\\
        [u_i,u_{i+1}] & \text{if } q \in Q_{i,i+1},\ 1\leq i < d,
    \end{cases}
\end{equation}
where
\begin{align}\label{eq:multibang:def_pi}
    Q_1 &= \set{q}{q-\alpha u_1 < \sqrt{2\alpha\beta} \quad\text{and}\quad q<\tfrac\alpha2(u_1+u_{2})},\\
    Q_i &= \set{q}{|q-\alpha u_i| < \sqrt{2\alpha\beta} \quad\text{and}\quad \tfrac\alpha2(u_{i-1}+u_i)<q< \tfrac\alpha2(u_i+u_{i+1})}\quad \text{ for } 1<i<d,\\
    Q_d &= \set{q}{q-\alpha u_d > \sqrt{2\alpha\beta} \quad\text{and}\quad \tfrac\alpha2(u_{d}+u_{d-1}) < q},\\
    Q_0 &= \set{q}{|q-\alpha u_j| > \sqrt{2\alpha\beta}\quad\text{for all } j\in\{1,\dots,d\}\quad\text{and}\quad \alpha u_1 < q < \alpha u_d}\\
    Q_{i0} &= \set{q}{|q-\alpha u_i| = \sqrt{2\alpha\beta}}\quad\text{for } 1\leq i \leq d,\\
    Q_{i,i+1} &= \set{q}{q = \tfrac\alpha2(u_{i}+u_{i+1})} \quad\text{for } 1 \leq i < d,
\end{align}
Note that some of these sets can be empty. In fact, for $\beta$ sufficiently large, $Q_0$ and hence $Q_{i0}$, $i=1,\dots,d$, can be guaranteed to vanish; see \cite[\S\,2.3]{CK:2013}.

To compute for given $\gamma>0$ and $v\in\R$ the resolvent $w=(\mathrm{Id}+\gamma\partial g^*)^{-1}(v)$ of $\partial g^*$, we again use the relation $v \in\{w\} + \gamma\partial g^*(w)$
and follow the case differentiation in the subdifferential.
\begin{enumerate}[(i)]
    \item $w\in Q_i$ for some $i\in\{1,\dots,d\}$: In this case, $v = w + \gamma u_i$, which implies that
        \begin{equation}
            |v-(\alpha+\gamma)u_i| \leq \sqrt{2\alpha\beta}
        \end{equation}
        and
        \begin{equation}
            \tfrac\alpha2\left(u_{i-1} + \left(1 + \tfrac{2\gamma}{\alpha}\right)u_i\right) < v < \tfrac\alpha2\left(\left(1+ \tfrac{2\gamma}{\alpha}\right)u_i+u_{i+1}\right)
        \end{equation}
        (with the first and last condition being void for $i=1$ and $i=d$, respectively).

    \item $w\in Q_0$: In this case, $v = \left(1+\frac\gamma\alpha\right)w$, which implies that
        \begin{equation}
            |\tfrac\alpha{\alpha+\gamma} v - \alpha u_j| > \sqrt{2\alpha\beta}\quad\text{ for all } j\in\{1,\dots,d\}
        \end{equation}
        and
        \begin{equation}
            (\alpha+\gamma)u_1 < v < (\alpha + \gamma u_d).
        \end{equation}

    \item $w\in Q_{i0}$ for some $i\in\{1,\dots,d\}$: In this case, $v\in[w,(1+\frac\gamma\alpha)w]$ and $w = \alpha u_i + \sqrt{2\alpha\beta}$, which implies that
        \begin{equation}
            \sqrt{2\alpha\beta} \leq  v - (\alpha + \gamma) u_i \leq \left(1+\frac\gamma\alpha\right)\sqrt{2\alpha\beta}. 
        \end{equation}

    \item $w\in Q_{i,i+1}$ for some $i\in\{1,\dots,d-1\}$: In this case, $v\in[w+\gamma u_i,w+\gamma u_{i+1}]$ and $w = \frac\alpha2 (u_i +u_{i+1})$, which implies that
        \begin{equation}
            \tfrac\alpha2\left(\left(1 + \tfrac{2\gamma}{\alpha}\right)u_i+u_{i+1}\right) \leq v \leq \tfrac\alpha2\left(u_i+\left(1+ \tfrac{2\gamma}{\alpha}\right)u_{i+1}\right).
        \end{equation}
\end{enumerate}
Inserting this into the definition of the Moreau--Yosida regularization and simplifying, we obtain
\begin{equation}\label{eq:multibang:subdiff_gamma}
    (\partial g^*)_\gamma(q) = 
    \begin{cases}
        u_i & \text{if } q \in Q_i^\gamma \quad\text{for some }i\in\{1,\dots,d\},\\
        \tfrac1{\alpha+\gamma} q & \text{if } q \in Q_0^\gamma,\\
        \tfrac1\gamma\left(q-(\alpha u_i + \sqrt{2\alpha\beta})\right) & \text{if } q \in Q_{i0}^\gamma \quad\text{for some }i\in\{1,\dots,d\},\\
        \tfrac1\gamma\left(q-\tfrac\alpha2(u_i+u_{i+1})\right) & \text{if } q \in Q_{i,i+1}^\gamma \quad\text{for some }i\in\{1,\dots,d-1\},
    \end{cases}
\end{equation}
where
\begin{align}
    Q_1^\gamma &= \set{q}{q-(\alpha+\gamma)u_1 < \sqrt{2\alpha\beta} \quad\text{and}\quad  q< \tfrac\alpha2\left(\left(1+ \tfrac{2\gamma}{\alpha}\right)u_1+u_2\right)},\\
    Q_i^\gamma &= \big\{q:|q-(\alpha+\gamma) u_i| < \sqrt{2\alpha\beta}\quad\text{and}\\ 
    \MoveEqLeft[-3] \tfrac\alpha2\left(u_{i-1} + \left(1 + \tfrac{2\gamma}{\alpha}\right)u_i\right) < q < \tfrac\alpha2\left(\left(1+ \tfrac{2\gamma}{\alpha}\right)u_i+u_{i+1}\right)\big\}
    \quad \text{ for } 1<i<d,\\
    Q_d^\gamma &= \set{q}{q-(\alpha+\gamma)u_d > \sqrt{2\alpha\beta} \quad\text{and}\quad \tfrac\alpha2\left(u_{d-1} + \left(1 + \tfrac{2\gamma}{\alpha}\right)u_d\right) < q},\\
    Q_0^\gamma &= \set{q}{|q-(\alpha+\gamma) u_j| > \sqrt{2\alpha\beta}\ \text{ for all } j\in\{1,\dots,d\}\ \text{ and }\ (\alpha+\gamma) u_1 < q < (\alpha+\gamma) u_d},\\
    Q_{i0}^\gamma &= \set{q}{\sqrt{2\alpha\beta} \leq  q - (\alpha + \gamma) u_i \leq \left(1+\frac\gamma\alpha\right)\sqrt{2\alpha\beta}}\quad\text{for } 1\leq i \leq d,\\
    Q_{i,i+1}^\gamma &= \set{q}{\tfrac\alpha2\left(\left(1 + \tfrac{2\gamma}{\alpha}\right)u_i+u_{i+1}\right) \leq q \leq \tfrac\alpha2\left(u_i+\left(1+ \tfrac{2\gamma}{\alpha}\right)u_{i+1}\right)} \quad\text{for } 1 \leq i < d.
\end{align}

Since $h_\gamma := (\partial g^*)_\gamma$ is Lipschitz continuous and piecewise differentiable, it is semismooth, and its Newton-derivative at $q$ in direction $\delta q$ is given by
\begin{equation}
    D_N h_\gamma(q)\delta q = 
    \begin{cases}
        0 & \text{if } q \in Q_i^\gamma \quad\text{for some }i\in\{1,\dots,d\},\\
        \tfrac1{\alpha+\gamma} \delta q & \text{if } q \in Q_0^\gamma,\\
        \tfrac1\gamma\delta q & \text{if } q \in Q_{i0}^\gamma \quad\text{for some }i\in\{1,\dots,d\},\\
        \tfrac1\gamma\delta q & \text{if } q \in Q_{i,i+1}^\gamma \quad\text{for some }i\in\{1,\dots,d-1\}.
    \end{cases}
\end{equation}

\section{Biconjugate of \texorpdfstring{$\scriptstyle g$}{g}}\label{sec:biconjugate}

We now compute the biconjugate $g^{**}$ used in \cref{thm:singular_set}.
As in \cref{sec:switching:conjugate}, we proceed by a casewise maximization based on the definition of $g^*$; however, we need to take into account the restrictions $q\in Q_i$. We assume that $v_1,v_2\geq 0$, the remaining cases following by symmetry. Consider first 
\begin{equation}
    g^{**}_1(v) = \sup_{q\in Q_1} v\cdot q - \frac{1}{2\alpha} q_1^2
\end{equation}
and note that the supremum can only be attained for $q_1,q_2\geq 0$.
Introducing Lagrange multipliers $\lambda,\mu\geq 0$ for the constraints
$q_1-q_2\geq 0$ and $\sqrt{2\alpha\beta}-q_2\geq 0$, we obtain the KKT system
\begin{equation}
    \left\{\begin{aligned}
            v_1 - \frac1\alpha \bar q_1 + \bar\lambda &= 0,\\
            v_2 -\bar\lambda - \bar\mu &=0,\\
            \bar\lambda(\bar q_1-\bar q_2) &=0,\\
            \bar\mu\left(\sqrt{2\alpha\beta}-\bar q_2\right) &=0.
    \end{aligned}\right.
\end{equation}
We now make a case differentiation based on the optimal value of the multipliers $\bar\lambda,\bar\mu$.
\begin{enumerate}[(i)]
    \item $\bar\mu=0$: Adding the first two equations then yields
        \begin{equation}
            v_1 + v_2 = \frac1\alpha \bar q_1.
        \end{equation}
        To obtain an equation for $\bar q_2$, we further discriminate based on the value of $\bar \lambda$:
        \begin{enumerate}
            \item $\bar \lambda=0$: The second equation yields the condition $v_2 = 0$. In this case, the value of $\bar q_2$ is irrelevant to the supremum and we obtain for any admissible $\bar q_2$ 
                \begin{equation}
                    g^{**}_1(v) = \frac\alpha2 v_1^2.
                \end{equation}
            \item $\bar \lambda\neq 0$: In this case, $\bar q_1 = \bar q_2 = \alpha(v_1+v_2)$ and we obtain
                \begin{equation}
                    g^{**}_1(v) = \frac\alpha2 (v_1+v_2)^2,
                \end{equation}
                while the condition $\bar q_2\leq \sqrt{2\alpha\beta}$ translates into
                \begin{equation}
                    v_1+v_2 \leq \sqrt{\frac{2\beta}\alpha}.
                \end{equation}
        \end{enumerate}
    \item $\mu\neq 0$: This implies $\bar q_2=\sqrt{2\alpha\beta}$.  For the value of $\bar q_1$, we again further discriminate based on the value of $\bar \lambda$:
        \begin{enumerate}
            \item $\bar \lambda = 0$: The first equation then yields $v_1 = \frac1\alpha \bar q_1$ and we obtain
                \begin{equation}
                    g^{**}_1(v) = \frac\alpha2 v_1^2 + \sqrt{2\alpha\beta}v_2,
                \end{equation}
                while the condition $\bar q_1 \geq \bar q_2=\sqrt{2\alpha\beta}$ translates into
                \begin{equation}
                    v_1 \geq \sqrt{\frac{2\beta}\alpha}.
                \end{equation}
            \item $\bar \lambda\neq 0$: In this case, $\bar q_1=\bar q_2=\sqrt{2\alpha\beta}$, which yields
                \begin{equation}
                    g^{**}_1(v) =\sqrt{2\alpha\beta}(v_1+v_2) - \beta.
                \end{equation}
                Note that no conditions on $v_1,v_2$ are obtained.
        \end{enumerate}
\end{enumerate}
Collecting these cases, we obtain
\begin{equation}
    g^{**}_1(v) \in \begin{cases}  
        \frac\alpha2 (v_1+v_2)^2 & \text{ if } v_1+v_2 \leq \sqrt{\tfrac{2\beta}\alpha},\\
        \frac\alpha2 v_1^2 + \sqrt{2\alpha\beta}v_2 & \text{ if }  v_1 \geq \sqrt{\tfrac{2\beta}\alpha},\\
        \sqrt{2\alpha\beta}(v_1+v_2) - \beta,
    \end{cases}
\end{equation}

We proceed similarly for
\begin{equation}
    g^{**}_2(v) = \sup_{q\in Q_2} v\cdot q - \frac{1}{2\alpha} q_2^2
\end{equation}
to obtain the possible values and conditions
\begin{equation}
    g^{**}_2(v) \in \begin{cases}  
        \frac\alpha2 (v_1+v_2)^2 & \text{ if } v_1+v_2 \leq \sqrt{\tfrac{2\beta}\alpha},\\
        \frac\alpha2 v_2^2 + \sqrt{2\alpha\beta}v_1 & \text{ if }  v_2 \geq \sqrt{\tfrac{2\beta}\alpha},\\
        \sqrt{2\alpha\beta}(v_1+v_2) - \beta,
    \end{cases}
\end{equation}
where the case (i)\,a) has been absorbed into the first and second case (which for $v_1=0$ are exhaustive).

For 
\begin{equation}
    g^{**}_0(v) = \sup_{q\in Q_0} v\cdot q - \frac{1}{2\alpha} (q_1^2+q_2^2) +\beta,
\end{equation}
we use the fact that the optimality conditions for the maximizer are given by $\bar q = P_{Q_0}(\alpha v)$, where $P_{Q_0}$ denotes the projection onto the convex feasible set $Q_0=\{q:q_1,q_2\geq\sqrt{2\alpha\beta}\}$. Inserting the possible cases $\bar q_i \in\{\alpha v_i,\sqrt{2\alpha\beta}\}$, $i=1,2$, yields
\begin{equation}
    g^{**}_0(v) \in \begin{cases}  
        \frac\alpha2 (v_1^2+v_2^2) + \beta & \text{ if } v_1,v_2 \geq \sqrt{\tfrac{2\beta}\alpha},\\
        \frac\alpha2 v_1^2 + \sqrt{2\alpha\beta}v_2 & \text{ if }  v_1 \geq \sqrt{\tfrac{2\beta}\alpha}\geq v_2,\\
        \frac\alpha2 v_2^2 + \sqrt{2\alpha\beta}v_1 & \text{ if }  v_2 \geq \sqrt{\tfrac{2\beta}\alpha}\geq v_1,\\
        \sqrt{2\alpha\beta}(v_1+v_2) - \beta, &\text{ if } v_1,v_2\leq\sqrt{\tfrac{2\beta}\alpha}.
    \end{cases}
\end{equation}

It remains to decide for a given $v\in\R^2$ which is the maximal of the feasible values.
\begin{enumerate}[(i)]
    \item For $v_1,v_2 \geq \sqrt{\frac{2\beta}\alpha}$, we have the three possible values
        \begin{equation}
            g^{**}(v) \in \begin{cases}
                \frac\alpha2 v_1^2 + \sqrt{2\alpha\beta}v_2,\\
                \frac\alpha2 v_2^2 + \sqrt{2\alpha\beta}v_1,\\
                \frac\alpha2 (v_1^2+v_2^2) + \beta,\\
                \sqrt{2\alpha\beta} (v_1+v_2) - \beta.
            \end{cases}
        \end{equation}
        Since $\sqrt{2\alpha\beta}\leq \alpha v_i$, $i=1,2$, and $\beta>0$, the first two are clearly smaller than the third. For the last case, we consider
        \begin{equation}
            \left(\frac\alpha2 (v_1^2+v_2^2) + \beta\right) - \left(\sqrt{2\alpha\beta} (v_1+v_2) - \beta\right) = \left(\frac\alpha2v_1^2 - \sqrt{2\alpha\beta}v_1\right) + \left(\frac\alpha2v_2^2 - \sqrt{2\alpha\beta}v_2\right) +2\beta.
        \end{equation}
        For these values of $v_1,v_2$, the terms in parentheses are monotonously increasing functions of $v_1$ and $v_2$, respectively; the minimimum is thus attained for $v_1=v_2=\sqrt{\frac{2\beta}{\alpha}}$ at $2\beta >0$. Hence, $g^{**}(v) =  \frac\alpha2 (v_1^2+v_2^2) + \beta$.
    \item For $v_1 \geq \sqrt{\frac{2\beta}\alpha}\geq v_2$, the only two distinct cases are
        \begin{equation}
            g^{**}(v) \in \begin{cases}
                \frac\alpha2 v_1^2 + \sqrt{2\alpha\beta}v_2,\\
                \sqrt{2\alpha\beta} (v_1+v_2) - \beta.
            \end{cases}
        \end{equation}
        Considering the difference of these functions as above, we conclude that $g^{**}(v) =  \frac\alpha2 v_1^2 + \sqrt{2\alpha\beta}v_2$.
    \item We argue similarly for $v_2 \geq \sqrt{\frac{2\beta}\alpha}\geq v_1$ to conclude $g^{**}(v) =  \frac\alpha2 v_2^2 + \sqrt{2\alpha\beta}v_1$.
    \item For $v_1+v_2 \leq \sqrt{\frac{2\beta}\alpha}$, we have to compare the two cases 
        \begin{equation}
            g^{**}(v) \in \begin{cases}
                \frac\alpha2 (v_1+v_2)^2,\\
                \sqrt{2\alpha\beta} (v_1+v_2) - \beta.
            \end{cases}
        \end{equation}
        We have
        \begin{equation}
            \frac\alpha2 (v_1+v_2)^2 - \left(\sqrt{2\alpha\beta} (v_1+v_2) - \beta\right) = 
            \left(\sqrt{\frac{\alpha}{2}}(v_1+v_2) -\sqrt{\beta}\right)^2 \geq 0
        \end{equation}
        and thus  $g^{**}(v) = \frac\alpha2 (v_1+v_2)^2$.
    \item In the remaining case $v_1,v_2\leq \sqrt{\frac{2\beta}\alpha}$ and $v_1+v_2\geq\sqrt{\frac{2\beta}\alpha}$, the only possible value is
        \begin{equation}
            g^{**}(v) = \sqrt{2\alpha\beta} (v_1+v_2) - \beta.
        \end{equation}
\end{enumerate}
Arguing similarly for the three remaining quadrants of $\R^2$, we obtain
\begin{equation}\label{eq:gbiconj}
    g^{**}(v) = \begin{cases}
        \frac\alpha2 (|v_1|^2+|v_2|^2) + \beta & \text{if } v\in D_0,\\
        \frac\alpha2 |v_1|^2 + \sqrt{2\alpha\beta}|v_2| & \text{if } v\in D_1,\\
        \frac\alpha2 |v_2|^2 + \sqrt{2\alpha\beta}|v_1| & \text{if } v\in D_2,\\
        \sqrt{2\alpha\beta} (|v_1|+|v_2|) - \beta & \text{if }v\in D_3,\\
        \frac\alpha2 (|v_1|+|v_2|)^2 & \text{if } v\in D_4,
    \end{cases}\tag{\textsc{b}.1}
\end{equation}
where
\begin{align}
    D_0 &:= \set{v}{|v_1|,|v_2| \geq \sqrt{\tfrac{2\beta}\alpha}},\\
    D_1 &:= \set{v}{|v_1|\geq \sqrt{\tfrac{2\beta}\alpha}\geq |v_2|},\\
    D_2 &:= \set{v}{|v_2|\geq \sqrt{\tfrac{2\beta}\alpha}\geq |v_1|},\\
    D_3 &:= \set{v}{|v_1|,|v_2|\leq \sqrt{\tfrac{2\beta}\alpha},\quad |v_1|+|v_2| \geq \sqrt{\tfrac{2\beta}\alpha}},\\
    D_4 &:= \set{v}{|v_1|+|v_2| \leq \sqrt{\tfrac{2\beta}\alpha}},
\end{align}
see \cref{fig:gbiconj}.
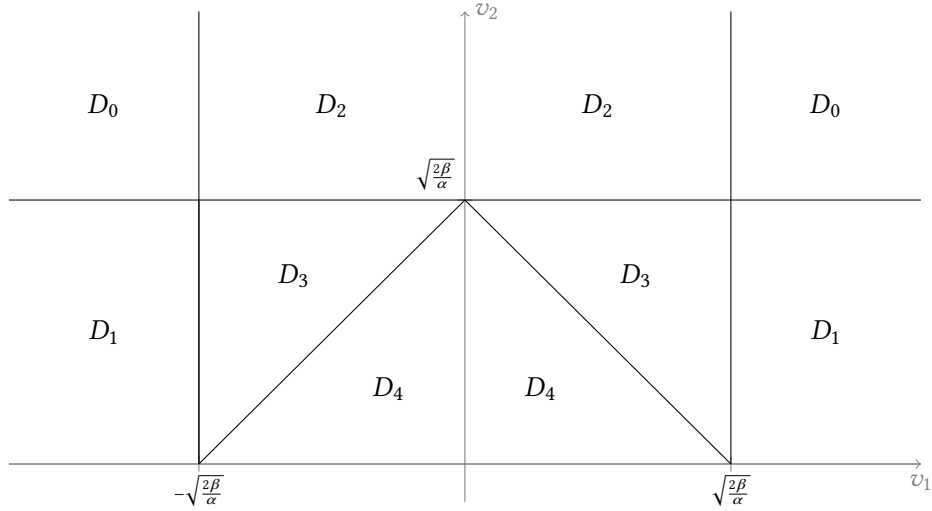
\begin{figure}
    \centering
    \begin{tikzpicture}[]
    \draw[gray,->](0,-2.5) -- (0,4) node[right] {\small $v_2$};
    \draw[gray,->](-6,-2) -- (6,-2) node[below] {\small $v_1$};
    \draw[gray](-6,-2) -- (6,-2);
    \draw[gray](3.5,-1.9) -- (3.5,-2.1);
    \draw[gray](-3.5,-1.9) -- (-3.5,-2.1);
    \draw[gray](-0.1,1.5) -- (0.1,1.5);
    \draw(-3.5,-2) -- (0,1.5);
    \draw(3.5,-2) -- (0,1.5);
    \draw(0,1.5) -- (3.5,1.5);
    \draw(-3.5,1.5) -- (0,1.5);
    \draw(3.5,1.5) -- (6,1.5);
    \draw(-3.5,1.5) -- (-6,1.5);
    \draw(-3.5,-2) -- (-3.5,1.5);
    \draw(3.5,-2) -- (3.5,1.5);
    \draw(-3.5,-2) -- (-3.5,1.5);
    \draw(-3.5,1.5) -- (-3.5,4);
    \draw(3.5,1.5) -- (3.5,4);
    \draw (-4.75,2.75) node {$D_0$};
    \draw (4.75,2.75) node {$D_{0}$};
    \draw (4.75,-0.25) node {$D_{1}$};
    \draw (-4.75,-0.25) node {$D_{1}$};
    \draw (-1.75,2.75) node {$D_{2}$};
    \draw (1.75,2.75) node {$D_{2}$};
    \draw (1,-1) node {$D_{4}$};
    \draw (-1,-1) node {$D_{4}$};
    \draw (2.25,0.5) node {$D_{3}$};
    \draw (-2.25,0.5) node {$D_{3}$};

    \draw (3.5,-2) node[below] {\tiny $\sqrt{\frac{2\beta}{\alpha}}$};
    \draw (-3.5,-2) node[below] {\tiny $-\sqrt{\frac{2\beta}{\alpha}}$};
    \draw (0,1.5) node[above left] {\tiny $\sqrt{\frac{2\beta}{\alpha}}$};

\end{tikzpicture}
    \caption{Subdomains $D_i\subset\R^2$ for the definition of $g^{**}$.}
    \label{fig:gbiconj}
\end{figure}

A short calculation shows that 
\begin{equation}\label{eq:gbiconj_bound}
    g^{**}(v) \geq \frac\alpha2 \left(|v_1|^2 + |v_2|^2\right)\qquad \text{for all } v\in\R^2.\tag{\textsc{b}.2}
\end{equation}
This is obvious for $v\in D_0$ and $v\in D_4$. For $v\in D_1$, we have $\sqrt{2\alpha\beta}\geq \alpha |v_2|$ and hence
\begin{equation}
    g^{**}(v) \geq \frac\alpha2 |v_1|^2 + \alpha|v_2|^2 \geq \frac\alpha2  |v_1|^2 + \frac\alpha2|v_2|^2,
\end{equation}
and similarly for $v\in D_2$. For $v\in D_3$, we consider the difference
\begin{equation}
    \begin{aligned}
        r(v) &:= \left(\sqrt{2\alpha\beta} \left(|v_1|+|v_2|\right) - \beta\right) - \frac\alpha2 \left(|v_1|^2 + |v_2|^2\right)\\
             &= \left(\sqrt{2\alpha\beta}|v_1|-\frac\alpha2|v_1|^2\right) + \left(\sqrt{2\alpha\beta}|v_2|-\frac\alpha2|v_2|^2\right) - \beta.
    \end{aligned}
\end{equation}
On $D_3$, the terms in parentheses are monotonically increasing functions of $|v_1|$ and $|v_2|$ respectively, and thus the minimum is attained at the boundard $|v_1|+|v_2| =  \sqrt{2\beta/\alpha}$, i.e., for $|v_1| = t\sqrt{2\beta/\alpha}$ and $|v_2| = (1-t)
\sqrt{2\beta/\alpha}$ for some $t\in[0,1]$. Inserting this and simplifying yields
\begin{equation}
    r(v) = \beta(2t-2t^2),
\end{equation}
which is a concave quadratic function of $t$ and thus attains its minimum at $t=0$ or $t=1$, yielding $r(v)\geq 0$ as desired.

\section*{Acknowledgments}
The work of CC and KK was supported in part by the Austrian Science Fund (FWF) under grant SFB \textsc{f}32 (SFB ``Mathematical Optimization and Applications in Biomedical Sciences''). The work of KI was partially supported by the Army Research Office under grant \textsc{daad}\,19-02-1-0394.

\printbibliography

\end{document}